\newcommand{\C}{\mathbb C}
\newcommand{\Z}{\mathbb Z}
\newcommand{\N}{\mathbb N}
\newcommand{\Y}{\mathcal Y}
\newcommand{\D}{\mathrel{\mathcal D}}
\newcommand{\T}{\mathcal T}
\newcommand{\Cplx}{\mathcal C}
\newcommand{\ZnZ}{\mathbb Z_n}
\newcommand{\ld}{\left\lfloor}
\newcommand{\rd}{\right\rfloor}
\newcommand{\s}{\ld s \rd}
\newcommand{\up}[1]{\textup{#1}}
\newcommand{\Gwr}[1]{G \wr #1}
\newcommand{\dom}{\textup{dom}}
\newcommand{\ran}{\textup{ran}}
\newcommand{\rk}{\textup{rk}}
\newcommand{\oneton}{\{1,\ldots,n\}}
\newcommand{\onetok}{\{1,\ldots,k\}}
\newcommand{\perm}{\textup{perm}}
\newcommand{\IRR}{\textup{IRR}}
\theoremstyle{plain} \newtheorem{cor}{Corollary}[section]
\theoremstyle{plain} 
\theoremstyle{plain} 
\theoremstyle{plain} \newtheorem{thm}[cor]{Theorem}
\theoremstyle{definition} \newtheorem{defn}[cor]{Definition}
\theoremstyle{plain} \newtheorem*{thma}{Theorem}
\theoremstyle{plain}  
\theoremstyle{definition} 
\theoremstyle{plain}  
\theoremstyle{plain}  
\theoremstyle{definition} 
\theoremstyle{definition} 
\theoremstyle{definition}
\begin{document}

\begin{abstract}
We extend the theory of fast Fourier transforms on finite groups to finite inverse semigroups. We use a general method for constructing the irreducible representations of a finite inverse semigroup to reduce the problem of computing its Fourier transform to the problems of computing Fourier transforms on its maximal subgroups and a fast zeta transform on its poset structure. We then exhibit explicit fast algorithms for 
particular inverse semigroups of interest---specifically, for the rook monoid and its wreath products by arbitrary finite groups.
\end{abstract}




\title{Fast Fourier Transforms for Finite Inverse Semigroups}
\author{Martin E. Malandro}
\address{Department of Mathematics and Statistics, Box 2206, Sam Houston State University, Huntsville, TX 77341-2206}
\email{malandro@shsu.edu}



\keywords{
Fast Fourier transform,
representation theory,
inverse semigroup,
rook monoid,
M\"obius transform}

\thanks{The author was partially supported by AFOSR under grant FA9550-06-1-0027.}


\maketitle

\section{Introduction}
\label{SecIntro}

Given a complex-valued function $f$ on a finite group $G$, we may view $f$ as an element of the group algebra $\C G$ by identifying the natural basis of $\C G$ with the characteristic functions of the elements $g\in G$. That is,
\[
f=\sum_{g\in G}f(g)\delta_g
\]
corresponds to
\[
\sum_{g\in G}f(g)g \in \C G.
\]
Because $\C G$ is a semisimple algebra, it is the direct sum of its minimal (two-sided) ideals $M_i$:
\[
\C G = M_1 \oplus \cdots \oplus M_n.
\]
By taking a basis for each of the $M_i$ subject to a ``normalization" condition explained in Section \ref{SecFourierTransforms}, we obtain a basis for $\C G$ known as a {\em Fourier basis}. The {\em Fourier transform} \index{Fourier transform} of a function $f$ is then its re-expression in terms of a Fourier basis. 

As an example, let $G=\ZnZ$, the cyclic group of order $n$. An element $f$ of the group algebra $\C \ZnZ$ expressed with respect to the natural basis may be viewed as a signal, sampled at $n$ evenly spaced points in time. In this case, the minimal ideals of $\C \ZnZ$ are all 1-dimensional, so a Fourier basis must be unique up to scaling factors, and the Fourier basis here is indeed the usual basis of exponential functions given by the classical discrete Fourier transform. The re-expression of $f$ in terms of a Fourier basis thus corresponds to a re-expression of $f$ in terms of the frequencies that comprise $f$. This change of basis may be computed efficiently with the help of the classical fast discrete Fourier transform (FFT). 

A naive computation the Fourier transform of $f \in \C G$ requires $|G|^2$ operations. An {\em operation} \index{operation} is defined to be a complex multiplication followed by a complex addition. The problem of efficiently computing the Fourier transform of an arbitrary $\C$-valued function on $G$ has been considered for a wide range of groups $G$, and efficient algorithms for computing this change of basis now exist for many finite groups. For a survey of these results see, e.g., \cite{Dan1990,DanDiameters,DanSchurs,DanSepVars,DanAMS,Dan2004}. For example, it is known that the Fourier transform of $f \in \C G$ requires no more than:
\begin{itemize}
\item $O(n \log n)$ operations if $G=\ZnZ$ \cite{Bluestein,CooleyTukey},
\item $O(|S_n| \log^2|S_n|)$ operations if $G=S_n$, the symmetric group on $n$ elements \cite{Maslen}, and
\item $O(|B_n| \log^4|B_n|)$ operations if $G=B_n$, the hyperoctahedral group (that is, the signed symmetric group) on $n$ elements \cite{DanWreath}.
\end{itemize}
We shall define a {\em fast Fourier transform} \index{fast Fourier transform (FFT)}(FFT) for (or on) a finite group $G$ to be a procedure for calculating the Fourier transform of an arbitrary complex-valued function on $G$ which compares favorably to the naive algorithm. In general, $O(|G|\log^c|G|)$ algorithms are the goal in group FFT theory, although there exist families of groups $G$ for which there exist greatly improved---yet not $O(|G|\log^c|G|)$---algorithms, such as the family of matrix groups over a finite field \cite{DanDiameters}.


In \cite{RookFFT} we extended the theory of finite-group FFTs to create FFTs for a particular inverse semigroup known as the rook monoid. In this paper we extend the theory of finite-group FFTs to all finite inverse semigroups. In particular, we provide a method for building FFTs on arbitrary finite inverse semigroups and we construct $O(|S|\log^c|S|)$ FFTs for specific inverse semigroups $S$ of interest. Our main results are these.

\begin{thma}[Theorem \ref{MyBigThm}]Let $S$ be a finite inverse semigroup with $\D$-classes $D_0,\ldots,D_n$. Let $r_k$ denote the number of idempotents in $D_k$. Choose an idempotent $e_k$ from each $\D$-class $D_k$, and let $G_k$ be the maximal subgroup of $S$ at $e_k$. Then the number of operations required to compute the Fourier transform of an arbitrary $\C$-valued function $f$ on $S$ is no more than
\[
\Cplx(\zeta_S) + \sum_{k=0}^n r_k^2 \Cplx(G_k),
\]
where $\Cplx(\zeta_S)$ is the maximum number of operations needed to compute the zeta transform of $f$ on $S$ and $\Cplx(G_k)$ is the maximum number of operations needed to compute the Fourier transform of an arbitrary $\C$-valued function on $G_k$.
\end{thma}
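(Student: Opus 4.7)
The plan is to exploit the algebra isomorphism between the semigroup algebra $\C S$ and a corresponding groupoid algebra, whose change of coordinates in the natural bases is precisely the zeta transform on the poset of idempotents. Structurally, the groupoid algebra decomposes as
\[
\bigoplus_{k=0}^n M_{r_k}(\C G_k),
\]
where $M_{r_k}(\C G_k)$ is the algebra of $r_k \times r_k$ matrices with entries in $\C G_k$. This decomposition packages the standard structural facts about Green's relations in a finite inverse semigroup: every $\D$-class $D_k$ contains exactly $r_k$ idempotents and $r_k^2$ $\mathcal{H}$-classes, each of which is in bijection with $G_k$, and the groupoid multiplication inside $D_k$ behaves exactly like matrix-unit multiplication tensored with multiplication in $G_k$. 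Together these ingredients are enough to build a two-stage algorithm that realizes an arbitrary Fourier transform on $S$.

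The computation of the Fourier transform of $f \in \C S$ then proceeds as follows. Stage one: apply the zeta transform to $f$, at cost $\Cplx(\zeta_S)$, producing its image $\tilde f$ in the groupoid algebra. Stage two: compute the Fourier transform of $\tilde f$, working $\D$-class by $\D$-class. Within the $k$-th summand $M_{r_k}(\C G_k)$, the element $\tilde f$ restricts to a matrix whose $r_k^2$ entries lie in $\C G_k$; a Fourier basis of $M_{r_k}(\C G_k)$ can be assembled from a Fourier basis of $\C G_k$ together with the matrix units, so the Fourier transform on this summand reduces to performing one group Fourier transform on $G_k$ for each of the $r_k^2$ entries, at a cost of $r_k^2 \Cplx(G_k)$. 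Summing over $k$ and adding the cost of stage one yields the claimed bound $\Cplx(\zeta_S) + \sum_k r_k^2 \Cplx(G_k)$, provided the combined two-stage map is recognized as a Fourier transform on $\C S$.

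The main obstacle is exactly this last recognition: one must check that the basis produced in this way really is a Fourier basis of $\C S$ in the sense of Section~\ref{SecFourierTransforms}, so that the pullback along the isomorphism $\C S \to \bigoplus_k M_{r_k}(\C G_k)$ of a Fourier basis of the target genuinely satisfies the paper's normalization condition, rather than some scaled or twisted variant. A more concrete technical point is verifying that the change-of-basis matrix between $\C S$ and the groupoid algebra, expressed in the two natural bases, is exactly the poset zeta transform, so that stage one incurs no hidden cost beyond $\Cplx(\zeta_S)$; this should follow from the explicit description of the irreducible representations of $S$ in terms of those of the $G_k$ that underlies the whole construction.
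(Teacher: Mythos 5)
Your proposal follows the paper's proof essentially step for step: the semigroup-to-groupoid change of basis is the zeta transform at cost $\Cplx(\zeta_S)$, Steinberg's isomorphism $\C S \cong \bigoplus_k M_{r_k}(\C G_k)$ reduces the remaining work to $r_k^2$ group FFTs per $\D$-class, and the collection of tensored-up representations $\bar\rho$ is verified to be a complete set of inequivalent irreducibles of $\C S$ so that the result is a genuine Fourier transform. The two ``obstacles'' you flag are exactly the two things the paper's Theorem \ref{ThmGroupoidToFourier} and equation \eqref{sissumofbrackett} check, so the proposal is correct and matches the paper's route.
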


\begin{thma}[Theorem 
\ref{ThmCplxSemigroupRn}]If $S=R_n$, the rook monoid on $n$ elements, then the Fourier transform of an arbitrary $\C$-valued function on $S$ may be computed in $O(|S|\log^3|S|)$ operations.
\end{thma}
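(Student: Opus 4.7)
My plan is to apply Theorem \ref{MyBigThm} with $S = R_n$ and bound its two summands separately. First I would spell out the relevant inverse-semigroup structure of the rook monoid: elements of $R_n$ are partial permutations of $\{1,\ldots,n\}$, the $\D$-classes $D_0,\ldots,D_n$ are indexed by rank, the idempotents in $D_k$ are the partial identity maps on the $k$-subsets of $\{1,\ldots,n\}$ (so $r_k = \binom{n}{k}$), and the maximal subgroup at any such idempotent is isomorphic to the symmetric group $S_k$.

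Next I would bound the subgroup contribution using Maslen's FFT bound $\Cplx(S_k) \le c \cdot k!\,\log^2(k!)$. Because $\log(k!) \le \log(n!) = O(\log|R_n|)$ and $|R_n| = \sum_{k=0}^n \binom{n}{k}^2 k!$, a direct estimate gives
\[
\sum_{k=0}^n r_k^2\,\Cplx(S_k) \;\le\; c\sum_{k=0}^n \binom{n}{k}^2\, k!\,\log^2(k!) \;=\; O\bigl(|R_n|\log^2|R_n|\bigr).
\]
So the symmetric-group factors already fit inside the target bound with room to spare.

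The remaining, and I expect main, task is to bound $\Cplx(\zeta_{R_n})$. Two elements of $R_n$ are comparable in the natural partial order precisely when one is a restriction of the other, so the principal order ideal below a rank-$k$ element is a Boolean lattice of rank $k$. I would exploit this by organising the zeta transform as a layered Yates-style subset sweep: partition $R_n$ into fibres indexed by image/domain data on which the poset acts as a Boolean lattice, run a fast subset zeta transform within each fibre, and iterate across the $n+1$ rank layers. The delicate point is that, unlike a single Boolean lattice, each pair of fibres at adjacent rank levels is glued together by the rook-monoid action, and one must carefully account for the cost of transferring partial sums across these gluings without incurring more than a polylogarithmic overhead.

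Once these transfers are bounded, a straightforward accounting across the $O(n) = O(\log|R_n|/\log\log|R_n|)$ rank levels produces an estimate of the form $\Cplx(\zeta_{R_n}) = O(|R_n|\log^3|R_n|)$. Combined with the $O(|R_n|\log^2|R_n|)$ subgroup contribution, Theorem \ref{MyBigThm} then delivers the stated $O(|R_n|\log^3|R_n|)$ bound for the Fourier transform on $R_n$.
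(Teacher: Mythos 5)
Your setup and the treatment of the group contribution are on target and match the paper's argument: you correctly identify the $\D$-classes as the rank strata, $r_k = \binom{n}{k}$, $G_k \cong S_k$, and the bound
\[
\sum_{k=0}^n \binom{n}{k}^2 \Cplx(S_k) \leq \frac{3}{4}n(n-1)\sum_{k=0}^n\binom{n}{k}^2 k! = O\bigl(|R_n|\log^2|R_n|\bigr)
\]
via Maslen's FFT is exactly what the paper proves (Theorem \ref{ThmRookPart1}).

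The gap is in the zeta transform, which is where the main new content of the result lives. Your sketch is built on the observation that principal order \emph{ideals} (sets $\{t : t \leq s\}$) are Boolean lattices, but the zeta transform $\zeta_f(s) = \sum_{t \geq s} f(t)$ lives on the order \emph{filters}; for $s$ of rank $k$ the filter $\{t : t \geq s\}$ is isomorphic to the poset of $R_{n-k}$, not a Boolean lattice. There is no obvious partition of $R_n$ into disjoint Boolean fibres on which a straightforward Yates sweep works: slicing by $(\dom, \ran)$ gives antichains, while slicing by principal ideals below rank-$n$ elements gives $n!$ overlapping Boolean lattices whose total size $n!\,2^n$ exceeds $|R_n| \sim n!\,e^{2\sqrt n}/(2n^{1/4}\sqrt{\pi e})$. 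You explicitly flag the "delicate point" of gluing adjacent fibres, but that is precisely the step that needs an actual algorithm and operation count, and it is left unresolved; without it the claimed $O(|R_n|\log^3|R_n|)$ bound for $\Cplx(\zeta_{R_n})$ is asserted rather than derived. The paper's algorithm handles this by working top-down in rank and, for each $s$ of rank $k$, maintaining a chain of $n-k+1$ auxiliary "partial zeta transforms" $\zeta_f(s, D, R)$ that exclude designated unused domain/range indices; the recursion mirrors the inclusion--exclusion behind $|R_n| = 2n|R_{n-1}| - (n-1)^2|R_{n-2}|$ and costs $O((n-k)^3)$ operations per element of rank $k$, giving $\Cplx(\zeta_{R_n}) \leq \frac{2}{3}n^3|R_n|$. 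If you want to pursue your route, you would need to spell out the fibering, the transfer step between rank levels, and the per-element operation count, and you should expect to land on something structurally equivalent to the paper's partial-zeta-transform recursion rather than a plain subset-sum sweep.
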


\begin{thma}[Theorem 
\ref{ThmCplxSemigroupGwrRn}]If $G$ is a finite group and $S=\Gwr R_n$, the wreath product of $R_n$ with $G$, then the Fourier transform of an arbitrary $\C$-valued function on $S$ may be computed in $O(|S|\log^4|S|)$ operations.
\end{thma}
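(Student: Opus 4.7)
My plan is to apply Theorem \ref{MyBigThm} to $S=\Gwr R_n$. First I would establish the relevant structural data: the $\D$-classes of $S$ are naturally indexed by rank $k\in\{0,1,\ldots,n\}$ as in $R_n$; the idempotents of $S$ correspond bijectively to those of $R_n$ (each idempotent of rank $k$ is supported on a $k$-subset of $\{1,\ldots,n\}$ and carries identity entries in the $G$-coordinates over that subset), so that $r_k=\binom{n}{k}$; and the maximal subgroup of $S$ at any such idempotent is the ordinary group-theoretic wreath product $\Gwr S_k$. The main theorem then bounds the Fourier transform cost by
\[
\Cplx(\zeta_S) + \sum_{k=0}^n \binom{n}{k}^2 \Cplx(\Gwr S_k).
\]

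Next, I would bound the main sum using an efficient group FFT on $\Gwr S_k$. The standard Clifford-theoretic construction of the irreducibles of $\Gwr S_k$ supports an adapted-basis algorithm along the lines of the hyperoctahedral FFT of \cite{DanWreath}; a minor generalization should yield an $O(|\Gwr S_k|\log^c|\Gwr S_k|)$ algorithm with $c$ independent of $G$. Combining $|\Gwr S_k|=k!|G|^k$ with the identity $\binom{n}{k}^2|\Gwr S_k|\leq|\Gwr R_n|$ and the uniform estimate $\log|\Gwr S_k|\leq\log|\Gwr R_n|$ yields
\[
\sum_{k=0}^n\binom{n}{k}^2\Cplx(\Gwr S_k)=O\bigl(|\Gwr R_n|\log^c|\Gwr R_n|\bigr).
\]
A careful accounting of the logarithmic factors (one from the wreath-product layer, up to two from the underlying $S_k$-FFT, and one folded into the $R_n$-type zeta contribution) should bring $c$ down to $4$, matching the exponent that Theorem \ref{ThmCplxSemigroupRn} produces for $R_n$ plus one extra $\log$ factor from the wreath layer.

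The zeta-transform cost is handled analogously to the treatment underlying Theorem \ref{ThmCplxSemigroupRn}. Since the idempotent semilattice of $\Gwr R_n$ is isomorphic as a poset to that of $R_n$ (a Boolean lattice on $n$ points), $\Cplx(\zeta_S)$ decomposes into an outer zeta transform on the Boolean lattice together with inner bookkeeping over the group-valued fibers. Reusing the $R_n$ estimate and scaling by the fiber size yields $\Cplx(\zeta_S)=O(|S|\log^2|S|)$, which is absorbed by the main sum.

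The principal obstacle will be justifying the $\log^c$ exponent in the group FFT for $\Gwr S_k$ with $c$ small enough that the final bound comes out to $\log^4$; this essentially requires a clean wreath-product group FFT whose log exponent does not depend on $|G|$. Once that ingredient is in place, the assembly into the $O(|S|\log^4|S|)$ bound is mechanical.
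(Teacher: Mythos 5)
Your high-level plan matches the paper's: apply Theorem~\ref{MyBigThm} with $r_k=\binom{n}{k}$ and maximal subgroups $\Gwr S_k$, invoke a wreath-product group FFT for the $\Cplx(\Gwr S_k)$ terms, and construct a separate fast zeta transform. However, a few of the load-bearing steps are left too soft to count as a proof.

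First, the group FFT on $\Gwr S_k$ is not something you need to ``generalize'' or speculate about: Rockmore's result (\cite[Corollary 2]{DanWreath}) already gives the explicit bound
$\T_{\Y_k}(\Gwr S_k)\le k!|G|^k\bigl[\tfrac{\Cplx(G)}{|G|}\tfrac{k(k+1)}{2}+2^h\tfrac{k^2(k+1)^2}{4}+1\bigr]$,
where $h$ is the number of irreducibles of $G$. The correct reading is not ``$O(|\Gwr S_k|\log^c|\Gwr S_k|)$'' but ``$O(|\Gwr S_k|\,k^4)$ with constants depending only on $G$''; after summing against $\binom{n}{k}^2$, bounding $k\le n$, and using $\sum_k\binom{n}{k}^2|\Gwr S_k|=|\Gwr R_n|$ together with $n=O(\log|\Gwr R_n|)$, the $\log^4$ falls out of the $k^4$ term alone. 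Your accounting (``one from the wreath layer, up to two from the $S_k$-FFT, one folded into the zeta contribution'') does not reflect where the factor actually comes from.

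Second, the zeta-transform bound is the genuine gap. You assert $\Cplx(\zeta_S)=O(|S|\log^2|S|)$ by decomposing the order on $\Gwr R_n$ into ``an outer zeta transform on the Boolean lattice plus inner bookkeeping over group-valued fibers,'' but the natural partial order on $\Gwr R_n$ is on the whole semigroup, not just the idempotent semilattice: $s\le t$ iff $s$ is obtained from $t$ by zeroing entries, so the upper set above a rank-$k$ element $s$ ranges over all $G$-labeled extensions, and the summand sizes grow with corank. This is not a product of a Boolean-lattice zeta transform with a fiberwise identity; one actually has to build a recursive inclusion-exclusion scheme (the analogue of the $R_n$ algorithm, with $|G|$-fold sums at each insertion), and a careful count gives $O(|G|^2n^3|\Gwr R_n|)=O(|\Gwr R_n|\log^3|\Gwr R_n|)$, not $\log^2$. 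Your $\log^2$ claim has no argument behind it; it is also unnecessary, since even $\log^3$ is absorbed by the dominant $\log^4$ term. But as written, the zeta step is asserted rather than proved, and the assertion is (as far as I can see) false.

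So: same route as the paper, correct conclusion, but you need to (i) cite and use Rockmore's bound concretely rather than gesture at ``a minor generalization,'' (ii) actually construct and count the fast zeta transform on the $\Gwr R_n$ poset, and (iii) fix the bookkeeping of where the four $\log$ factors come from.
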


We proceed as follows. In Section \ref{SecInvSemigroups} we review basic concepts related to semigroup theory and we list a few properties of the most important inverse semigroup, the rook monoid. 
In Section 
\ref{SecFourierTransforms}  
we review some representation theory related to inverse semigroups, we define the notion of the Fourier transform on an inverse semigroup, and we make precise the notion of a fast Fourier transform. 
In Section \ref{secGeneralApproach} we create a general framework for constructing FFTs on finite inverse semigroups. In particular, we explain how the problem of computing the Fourier transform on an inverse semigroup may be reduced to the problems of computing Fourier transforms on its maximal subgroups and a zeta transform on its poset structure. 
In Section \ref{SecRookFFT} we proceed according to our general framework to construct an FFT for the rook monoid. In Section \ref{SecWreath} we generalize this FFT to an FFT for wreath products of the rook monoid with arbitrary finite groups. 
Section \ref{SecFutureDirections} contains thoughts on future directions for this line of research.

\section{Inverse Semigroups}
\label{SecInvSemigroups}

A {\em semigroup} $S$ is a nonempty set with an associative, binary operation, which we will write multiplicatively. If there is an identity element for the multiplication in $S$, then $S$ is said to be a {\em monoid}. A group is a monoid where every element has a (unique) multiplicative inverse.
In this paper we are concerned with the class of semigroups known as {\em inverse semigroups}.

\begin{defn}An {\em inverse semigroup} is a semigroup $S$ such that, for each $x \in S$, there is a {\em unique} $y \in S$ such that
\[xyx = x \textup{ and } yxy = y. \] In this case, we write $y=x^{-1}$. 
\end{defn}

We remark that the condition that $y$ be unique is necessary in this definition. An element $x \in S$ is said to be {\em regular} or {\em Von-Neumann regular} if there is at least one $y \in S$ satisfying $xyx = x$ and $yxy = y$, and $S$ is said to be {\em regular} if every element of $S$ is regular. Consider the full transformation semigroup $T_n$ on the set $\{1, 2, \ldots, n\}$, that is, all maps from $\{1, 2, \ldots, n\}$ to itself under composition. It is easy to see that $T_n$ is regular, and that (for $n \geq 2$) there exist elements $x \in T_n$ for which there are {\em multiple} elements $y \in X$ satisfying $xyx = x$ and $yxy = y$. $T_n$ is therefore not inverse.  An equivalent characterization of inverse semigroups (see, e.g., \cite{Lawson}) is as follows.

\begin{thm}An inverse semigroup is a semigroup $S$ which is regular and in which all idempotents of $S$ commute.\end{thm}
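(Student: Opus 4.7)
The plan is to prove the characterization in both directions, since the statement identifies an inverse semigroup (in the sense already defined by uniqueness of the pseudo-inverse) with a regular semigroup in which idempotents commute.

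I would first establish the easier direction: if $S$ is regular and its idempotents commute, then pseudo-inverses are unique. Suppose $y_1, y_2 \in S$ both satisfy $xy_ix = x$ and $y_ixy_i = y_i$. The key observation is that $y_1x$, $y_2x$, $xy_1$, and $xy_2$ are all idempotents; for instance, $(y_1x)^2 = y_1(xy_1x) = y_1x$. Using the commutativity hypothesis, I would derive $y_1 = y_1(xy_2x)y_1 = (y_1x)(y_2x)y_1 = (y_2x)(y_1x)y_1 = y_2xy_1$ and symmetrically $y_2 = y_1xy_2$. One further swap, now of the commuting idempotents $xy_1$ and $xy_2$, gives $y_1 = y_2xy_1 = (y_1xy_2)(xy_1) = y_1(xy_1)(xy_2) = y_1xy_2 = y_2$.

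For the reverse direction, assume $S$ is inverse in the original sense. Regularity is immediate. For any idempotent $e$, the relation $eee = e$ together with uniqueness forces $e^{-1} = e$, and for any $x$ the two defining equations force $(x^{-1})^{-1} = x$. The main step is to show that the product of any two idempotents is itself idempotent. Given idempotents $e, f$, set $x = (ef)^{-1}$. I would verify by direct substitution, using $e^2 = e$ and $f^2 = f$, that $fxe$ also satisfies the two defining equations for an inverse of $ef$; uniqueness then forces $x = fxe$. From this presentation, $x^2 = (fxe)(fxe) = fx(ef)xe = fxe = x$, so $x$ is idempotent, and combined with $x^{-1} = x$ and $x^{-1} = ef$ this gives that $ef$ is idempotent. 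By symmetry, so is $fe$.

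To conclude, I would expand $(ef)(fe)(ef)$ and $(fe)(ef)(fe)$: each collapses, via $e^2 = e$, $f^2 = f$, and the newly established idempotence of $ef$, to $ef$ and $fe$ respectively, so $fe$ serves as an inverse of $ef$; uniqueness then yields $fe = (ef)^{-1} = ef$. The main obstacle is the middle step in the reverse direction — spotting the identity $x = fxe$ for $x = (ef)^{-1}$, which is what makes idempotence of products of idempotents fall out. Once that identity is in hand, everything else is a routine application of the uniqueness axiom.
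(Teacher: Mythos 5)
Your proof is correct, but note that the paper does not actually prove this theorem---it is stated as a known characterization with a citation to Lawson's book, so there is no ``paper's own proof'' to compare against. What you have reconstructed is essentially the standard textbook argument. Both halves check out: in the forward direction, the observations that $y_1x, y_2x, xy_1, xy_2$ are idempotent and the two applications of commutativity correctly yield $y_1 = y_2xy_1 = y_1xy_2 = y_2$; in the reverse direction, the identity $x = fxe$ for $x=(ef)^{-1}$ (verified by substitution and uniqueness) does the real work, giving $x^2 = fx(ef)xe = fxe = x$, hence $ef = x^{-1} = x$ is idempotent, and then the symmetric computation $(ef)(fe)(ef) = ef$, $(fe)(ef)(fe) = fe$ identifies $fe$ as an inverse of the idempotent $ef$, which is its own unique inverse, forcing $ef = fe$. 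One small presentational remark: in the forward direction you only argue uniqueness of pseudo-inverses; it is worth stating explicitly that regularity supplies existence, so that together with uniqueness this matches the paper's definition of an inverse semigroup. The argument as written is sound and self-contained, which is a genuine improvement over the paper's treatment (a bare citation).
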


The most important finite inverse semigroup is the {\em rook monoid} (also called the {\em symmetric inverse semigroup}) on $n$ elements, which we denote by $R_n$. It is the semigroup of all injective partial functions from $\{1,\ldots,n\}$ to itself under the usual operation of partial function composition. In this paper, we adopt the convention that maps act on the left of sets, and so, for $g,f\in R_n$, $g\circ f$ is defined for precisely the elements $x$ for which $x \in \dom (f)$ and $f(x) \in \dom (g)$. $R_n$ is called the rook monoid because it is isomorphic to the semigroup of all $n \times n$ matrices with the property that at most one entry in each row is 1 and at most one entry in each column is 1 (the rest being 0) under matrix multiplication, and such matrices (called {\em rook matrices}) correspond to the set of all possible placements of non-attacking rooks on an $n \times n$ chessboard. For example, consider the element $\sigma \in R_4$ defined by
\[\sigma(2)=1, \quad \sigma(4)=4.             \]
Then, viewed as a partial permutation, $\sigma$ is
\begin{displaymath}
\sigma = \left( \begin{array}{cccc}
1&2&3&4\\
-&1&-&4 \end{array} \right)
\end{displaymath}
where the dash indicates that the above entry is not mapped to anything. As a rook matrix, we have
\begin{displaymath}
\sigma = \left[
\begin {array}{cccc}
0&1&0&0\\
0&0&0&0\\
0&0&0&0\\
0&0&0&1
\end {array}
\right]. \end{displaymath} 

It is easy to see that $R_n$ is indeed an inverse semigroup, as the unique semigroup-inverse of a rook matrix is its transpose.


The rook monoid is a generalization of the symmetric group (as the symmetric group $S_n$ is contained in $R_n$ as the set of elements with nonzero determinant), and it plays the same role for finite inverse semigroups as the symmetric group does for finite groups in the following variation of Cayley's theorem \cite[p. 36--37]{Lawson}.
\begin{thm}Let $S$ be a finite inverse semigroup. Then there exists $n\in \Z$ for which $S$ is isomorphic to a subsemigroup of $R_n$. 
\end{thm}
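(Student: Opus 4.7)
The plan is to prove a direct analogue of Cayley's theorem for inverse semigroups, known as the Wagner--Preston representation. Specifically, I will embed $S$ into $R_n$ with $n=|S|$ by letting each element $s \in S$ act on $S$ (regarded as the set $\{1,\ldots,n\}$ after a choice of bijection) by partial left multiplication.

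For each $s \in S$, I will define a partial function $\phi_s : S \rightharpoonup S$ by
\[
\dom(\phi_s) = \{x \in S : s^{-1}sx = x\}, \qquad \phi_s(x) = sx \text{ for } x \in \dom(\phi_s).
\]
First I would verify that $\phi_s$ is an injective partial map: if $\phi_s(x) = \phi_s(y)$ then $sx = sy$, and left-multiplying by $s^{-1}$ gives $s^{-1}sx = s^{-1}sy$, which by the defining property of the domain forces $x = y$. A symmetric computation shows $\ran(\phi_s) = \{y \in S : ss^{-1}y = y\}$, so $\phi_s \in R_S \cong R_n$.

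The main technical step is showing $\phi : S \to R_n$, $s \mapsto \phi_s$, is a homomorphism: $\phi_{st} = \phi_s \circ \phi_t$ as partial maps. On a common domain both give $stx$, so the content is matching domains. Here is where I expect the real work: we have
\[
\dom(\phi_{st}) = \{x : t^{-1}s^{-1}stx = x\}, \quad \dom(\phi_s \circ \phi_t) = \{x : t^{-1}tx = x \text{ and } s^{-1}stx = tx\}.
\]
To show these agree, I would use the fact that $tt^{-1}$ and $s^{-1}s$ are commuting idempotents in $S$. For one inclusion, given $t^{-1}s^{-1}stx = x$, left-multiplying by $t$ gives $(tt^{-1})(s^{-1}s)(tx) = tx$; commuting the two idempotents and using $tt^{-1}t = t$ rewrites this as $s^{-1}stx = tx$, and then $t^{-1}tx = t^{-1}(s^{-1}stx) = x$. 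The converse inclusion is a direct substitution. This is the step where the hypothesis that $S$ is inverse (equivalently, that idempotents commute) is essential; the argument would fail for a general regular semigroup.

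Finally I would check injectivity of $\phi$. Note that $s^{-1}s \in \dom(\phi_s)$, and $\phi_s(s^{-1}s) = s$, so $s$ is recoverable from $\phi_s$ once one identifies the distinguished element $s^{-1}s$ of its domain. If $\phi_s = \phi_t$ then they have the same domain, which forces $s^{-1}s \leq t^{-1}t$ and $t^{-1}t \leq s^{-1}s$ in the natural order on idempotents, hence $s^{-1}s = t^{-1}t$; evaluating both partial maps at this common idempotent yields $s = t$. This gives the embedding $S \hookrightarrow R_n$ with $n = |S|$, proving the theorem.
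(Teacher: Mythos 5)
Your proof is correct and is precisely the Wagner--Preston representation, which is the standard argument (and the one in the Lawson reference the paper cites for this theorem without giving a proof of its own). The domain-matching verification via commuting idempotents and the recovery of $s$ as $\phi_s(s^{-1}s)$ are exactly the key steps.
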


It is often useful to consider subsets of $R_n$ whose elements have domains of a certain size.
\begin{defn}Given an element $\sigma \in R_n$, the {\em rank}\index{rank} of $\sigma$, denoted \textup{rk}$(\sigma)$, is defined to be \textup{rk}$(\sigma) = |\dom(\sigma)| = |\ran(\sigma)|$. It is clear that the rank of $\sigma$ is the same as the rank of the associated rook matrix.
\end{defn}

We have the following theorem on the size of $R_n$.
\begin{thm}
\label{ThmSizeRn}
\[|R_n| = \sum_{k=0}^{n} {\binom{n}{k}}^2 k!\]
\end{thm}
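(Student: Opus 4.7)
The plan is to prove this by counting elements of $R_n$ according to their rank. Since every element $\sigma \in R_n$ is an injective partial function from $\{1,\ldots,n\}$ to itself, its rank $\rk(\sigma) = |\dom(\sigma)|$ is a well-defined integer between $0$ and $n$. I would therefore partition $R_n$ as
\[
R_n = \bigsqcup_{k=0}^n \{\sigma \in R_n : \rk(\sigma) = k\}
\]
and count each piece separately.

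To count the elements of rank $k$, I would observe that specifying such an element amounts to three independent choices: first, a choice of the domain (a $k$-element subset of $\{1,\ldots,n\}$, giving $\binom{n}{k}$ possibilities); second, a choice of the range (again a $k$-element subset of $\{1,\ldots,n\}$, giving $\binom{n}{k}$ possibilities); and third, a choice of a bijection from the chosen domain to the chosen range (giving $k!$ possibilities, since a bijection between two $k$-element sets is the same as a permutation of $\{1,\ldots,k\}$ after fixing arbitrary orderings of the two sets). Multiplying these counts gives $\binom{n}{k}^2 k!$ elements of rank $k$, and summing over $k$ yields the claimed formula.

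There is no real obstacle here; the only thing to verify is that these three choices are genuinely independent and that every rank-$k$ element of $R_n$ is uniquely determined by this data, which is immediate from the definition of an injective partial function. As a sanity check, one could also translate the argument through the rook-matrix description: a rank-$k$ rook matrix is determined by choosing which $k$ rows contain a nonzero entry, which $k$ columns contain a nonzero entry, and a bijection between the chosen rows and columns, yielding the same count.
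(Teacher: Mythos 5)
Your proof is correct and uses the same argument as the paper: partition $R_n$ by rank, and for each rank $k$ count $\binom{n}{k}$ choices of domain, $\binom{n}{k}$ choices of range, and $k!$ bijections between them. The additional remarks about independence of choices and the rook-matrix reformulation are fine but not needed.
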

\begin{proof}
For any particular rank $k$, there are $\binom{n}{k}$ choices for the domain and $\binom{n}{k}$ choices for the range of an element of $R_n$, and for any particular choice of domain and range, there are $k!$ ways of mapping the domain to the range.\end{proof}

We also have the recursive formula, which we will prove in Section \ref{SecRookFFT}.
\begin{thm}
\index{rook monoid!cardinality}
\label{SizeRnRecursiveThm}
For $n \geq 3$, 
\[
|R_n| = 2n|R_{n-1}| - (n-1)^2|R_{n-2}|.   
\]
\end{thm}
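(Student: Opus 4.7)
The plan is to prove the recurrence combinatorially, exploiting the rook-matrix description of $R_n$. View each element of $R_n$ as a placement of finitely many non-attacking rooks on an $n \times n$ board, and partition these placements into four classes according to whether row $n$ and column $n$ each contain a rook.

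First I would count the easy classes. Placements with neither row $n$ nor column $n$ occupied are, by restriction, in bijection with $R_{n-1}$, so they contribute $|R_{n-1}|$. Placements with both row $n$ and column $n$ occupied split according to whether the rook at $(n,n)$ does double-duty or not. The ``corner rook'' sub-case again restricts to $R_{n-1}$ and contributes $|R_{n-1}|$, while the ``two distinct rooks'' sub-case is parametrized by an ordered pair $(i,j) \in \{1,\dots,n-1\}^2$ of positions $(n,j)$ and $(i,n)$, together with an arbitrary rook placement on the remaining $(n-2) \times (n-2)$ sub-board, giving $(n-1)^2 |R_{n-2}|$.

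The two mixed classes (row $n$ occupied but not column $n$, and vice versa) are symmetric. In the first, one chooses $j \in \{1,\dots,n-1\}$ for the lone rook in row $n$, and the remaining rooks live on an $(n-1) \times (n-2)$ board. Thus each mixed class contributes $(n-1) \, p_{n-1,n-2}$, where $p_{a,b} = \sum_k \binom{a}{k}\binom{b}{k} k!$ is the number of partial injections from an $a$-set to a $b$-set. Summing everything gives
\[
|R_n| = 2|R_{n-1}| + 2(n-1)\, p_{n-1,n-2} + (n-1)^2 |R_{n-2}|.
\]

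The real task is to eliminate the $p_{n-1,n-2}$ term, and this is where I expect the main (minor) obstacle to sit. I would prove the auxiliary identity $p_{m,m-1} = |R_m| - m\,|R_{m-1}|$ by the same style of case split applied to $R_m$: an element $\sigma \in R_m$ either has $m \in \ran(\sigma)$, in which case $\sigma^{-1}(m)$ can be chosen in $m$ ways and the rest is an arbitrary element of $R_{m-1}$, or else $\sigma$ is a partial injection from $\{1,\dots,m\}$ to $\{1,\dots,m-1\}$. Setting $m = n-1$ yields $p_{n-1,n-2} = |R_{n-1}| - (n-1)|R_{n-2}|$, and substituting into the display above collapses the expression to $|R_n| = 2n|R_{n-1}| - (n-1)^2 |R_{n-2}|$, as required. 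The hypothesis $n \geq 3$ is exactly what is needed so that all the sub-boards and auxiliary $R_k$'s invoked are legitimate.
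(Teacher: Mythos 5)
Your proof is correct, but it takes a different route from the paper's. The paper also looks at one boundary row and column (row $1$ and column $1$ rather than row $n$ and column $n$, but that is immaterial); however, instead of a disjoint partition it uses a sieve: it counts (i) the $|R_{n-1}|$ placements avoiding row~$1$ and column~$1$, (ii) for each $\alpha\in\{1,\dots,n\}$ the $|R_{n-1}|$ placements with a rook at $(\alpha,1)$, and (iii) for each $\alpha\in\{2,\dots,n\}$ the $|R_{n-1}|$ placements with a rook at $(1,\alpha)$, then subtracts the $(n-1)^2|R_{n-2}|$ placements counted twice in (ii) and (iii). That gives $|R_{n-1}| + n|R_{n-1}| + (n-1)|R_{n-1}| - (n-1)^2|R_{n-2}|$ in one stroke. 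Your version instead keeps the four classes genuinely disjoint, which forces the rectangular rook number $p_{n-1,n-2}$ to appear in the two mixed classes, and you then need the auxiliary identity $p_{m,m-1} = |R_m| - m|R_{m-1}|$ (itself proved by the same kind of boundary analysis) to eliminate it. Both arguments are elementary; the paper's overcount-and-correct bookkeeping lands directly on the target coefficients, while yours is a cleaner partition at the cost of the extra lemma, and it incidentally records the useful intermediate fact about the rectangular counts $p_{a,b}$.
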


\section{Fourier transforms for Inverse Semigroups}
\label{SecFourierTransforms}

For the rest of this paper, $S$ will denote a finite inverse semigroup. 

\begin{defn}The {\em semigroup algebra} of $S$ over $\C$, denoted $\C S$, is the formal $\C$-span of the symbols $\{s\}_{s\in S}$. Multiplication in $\C S$, denoted by $\ast$, is given by convolution (i.e., the linear extension of the semigroup operation via the distributive law): Suppose $f, g \in \C S$, with
\[
f = \sum_{r\in S}f(r) r, \quad g=\sum_{t\in S}g(t) t.
\]
Then
\[
f\ast g = \sum_{r\in S}f(r) r \sum_{t\in S}g(t) t = \sum_{s \in S} \sum_{r,t \in S: rt=s} f(r) g(t) s.
\]
\end{defn}
\noindent {\bf Remark:} If $S$ is a group, then convolution may be written in the familiar way:
\[f\ast g = \sum_{s \in S}\sum_{r \in S} f(r)g(r^{-1}s)s.\]

Let $f:S\rightarrow \C$ be any complex-valued function on $S$. We view $f$ as an element of the semigroup algebra $\C S$ by associating the natural basis of $\C S$ with the characteristic functions of the elements $s\in S$. So
\[
f = \sum_{s\in S}f(s)\delta_s
\] 
corresponds to
\[
\sum_{s\in S}f(s)s \in \C S.
\]
Thus, $\C S$ is the algebra of all $\C$-valued functions on $S$. 
The natural basis $\{s\}_{s\in S}$ of $\C S$ is also called the {\em semigroup basis}.


\begin{defn}A {\em representation} $\rho$ (of dimension $d_\rho \in \N$) of the semigroup algebra $\C S$ is an algebra homomorphism
\[
\rho: \C S\rightarrow M_{d_\rho}(C).
\]
\end{defn}
Equivalently, a representation $\rho$ of $\C S$ is a $d_\rho$-dimensional $\C$-vector space which is also a left $\C S$-module.

\begin{defn}A representation $\rho$ of $\C S$ is said to be {\em null} if $\rho(a)$ is the zero matrix for all $a\in \C S$.
\end{defn}

\begin{defn}A representation $\rho$ of $\C S$ is said to be {\em irreducible} if it is non-null and simple as a left $\C S$-module. That is, $\rho$ is irreducible if $\rho \neq 0$ and there is no $4$-tuple $(X,\rho_1,\rho_2,g)$, where $X$ is an invertible matrix, $\rho_1$ and $\rho_2$ are representations of $\C S$, and $g$ is a matrix-valued function on $\C S$, for which
\[
X \rho(a) X^{-1}= \left(\begin{array}{cc}\rho_1(a) & 0 \\ g(a) & \rho_2(a) \\ \end{array} \right)
\]
for all $a\in \C S$. 
\end{defn}

\begin{defn}
Two representations $\rho_1$ and $\rho_2$ of $\C S$ are {\em equivalent} if there is an invertible matrix $X$ for which
\[
X\rho_1(a)X^{-1} = \rho_2(a)
\]
for all $a\in \C S$. That is, two representations are equivalent if they are isomorphic as left $\C S$-modules.
\end{defn}

Let $S$ be a finite inverse semigroup. In \cite[Theorem 4.4]{Munn2}, Munn proves that the semigroup algebra $\C S$ is semisimple, and we therefore have: 
\begin{thm}Any representation of $\C S$ is equivalent to a direct sum of irreducible and null representations of $\C S$.
\end{thm}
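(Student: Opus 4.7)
The plan is to bridge the gap between Munn's semisimplicity theorem and the desired decomposition by explicitly isolating the null part of an arbitrary representation, then invoking the classical theory for the remaining unital piece.

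First I would observe that because $\C S$ is semisimple and finite-dimensional over $\C$, the Wedderburn structure theorem gives $\C S \cong \prod_i M_{n_i}(\C)$ as $\C$-algebras, and in particular $\C S$ has a (two-sided) multiplicative identity $1_{\C S}$ (even when the underlying semigroup $S$ itself has no identity element). This identity is the algebra fact I want to leverage: given any representation $\rho: \C S \to M_{d_\rho}(\C)$, the matrix $E = \rho(1_{\C S})$ is idempotent, and the relations $\rho(a) = \rho(1_{\C S} \cdot a) = E\rho(a)$ and $\rho(a) = \rho(a \cdot 1_{\C S}) = \rho(a) E$ hold for every $a \in \C S$.

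Next I would exploit this idempotent to split the ambient space. Writing $\C^{d_\rho} = E\C^{d_\rho} \oplus (I - E)\C^{d_\rho}$ and choosing a basis adapted to this decomposition, the two identities above show that every $\rho(a)$ preserves the summand $V_1 := E\C^{d_\rho}$ and annihilates the summand $V_0 := (I-E)\C^{d_\rho}$. In matrix form this yields an equivalence
\[
X \rho(a) X^{-1} = \begin{pmatrix} \rho_1(a) & 0 \\ 0 & 0 \end{pmatrix}
\]
for a suitable invertible $X$, where $\rho_0$ (the bottom block) is null and $\rho_1$ (the top block) is a representation on $V_1$ satisfying $\rho_1(1_{\C S}) = I_{V_1}$, i.e., a unital representation of $\C S$.

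Finally, because $\rho_1$ is a unital module over the semisimple algebra $\C S$, the standard Wedderburn/Maschke-style argument (Munn's theorem together with the classical fact that any unital module over a semisimple finite-dimensional algebra is a direct sum of simple modules) decomposes $\rho_1$ as a direct sum of irreducible representations of $\C S$. Combining this with the null summand $\rho_0$ gives the claimed decomposition of $\rho$. The main conceptual obstacle is purely the distinction between unital and non-unital representations built into the definition used here: once the null part has been peeled off using $E = \rho(1_{\C S})$, the remainder of the argument reduces immediately to the familiar semisimple-algebra machinery.
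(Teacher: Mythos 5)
Your proof is correct and supplies the details that the paper leaves implicit, since the paper only cites Munn's semisimplicity theorem and asserts the decomposition without argument. The key observation you make---that $\C S$, being a finite-dimensional semisimple $\C$-algebra, is necessarily unital even when $S$ itself has no identity, and that the idempotent $E = \rho(1_{\C S})$ splits any representation $\rho$ into a null summand on $(I-E)\C^{d_\rho}$ and a unital summand on $E\C^{d_\rho}$---is exactly the right way to bridge from Munn's result to the statement as phrased (which explicitly allows null summands precisely because representations here are not assumed unital). The commutation $E\rho(a) = \rho(a) = \rho(a)E$ correctly shows both subspaces are invariant, and once the unital piece is isolated, the classical semisimple-module decomposition finishes the job. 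This is the standard argument and matches the paper's intent; it is just made explicit where the paper treats it as a known consequence.
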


Furthermore, Wedderburn's theorem applies to $\C S$.
\begin{thm}[Wedderburn's theorem] Let $S$ be a finite inverse semigroup. Let $\Y$ be a complete set of inequivalent, irreducible representations of $\C S$. Then $\Y$ is finite, and the map
\begin{equation}
\bigoplus_{\rho \in \Y}\rho:\C S\rightarrow \bigoplus_{\rho \in \Y} M_{d_\rho}(\C)
\label{WedderSemi}
\end{equation}
is an isomorphism of algebras. Explicitly, let $f\in \C S$ where
$
f=\sum_{s\in S}f(s)s.
$
Then
\[
f \mapsto \bigoplus_{\rho \in \Y}\sum_{s\in S}f(s)\rho(s) 
\]
in this isomorphism.
\end{thm}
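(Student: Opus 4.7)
The plan is to reduce the statement to the classical Wedderburn--Artin theorem for finite-dimensional semisimple algebras, which is legitimate because $\C S$ is a $\C$-algebra of dimension $|S|$ and because the previous theorem (citing Munn) tells us that $\C S$ is semisimple. Finiteness of $\Y$ is essentially a free consequence of finite dimensionality, since each $\rho \in \Y$ contributes an independent simple left $\C S$-module whose isomorphism class is determined by $\rho$ up to equivalence.

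First I would invoke the Wedderburn--Artin structure theorem to decompose $\C S$ as a direct sum of minimal two-sided ideals,
\[
\C S = M_1 \oplus \cdots \oplus M_n,
\]
and identify each $M_i$ with a full matrix algebra $M_{d_i}(\C)$. Next I would argue that each $M_i$ carries a natural action on $\C^{d_i}$ giving an irreducible representation $\rho_i$ of $\C S$ (the other factors acting as zero), and conversely that every irreducible representation of $\C S$ is equivalent to exactly one of the $\rho_i$: given an arbitrary irreducible $\rho$, annihilation by the central idempotents shows that $\rho$ factors through exactly one $M_i$, and by Schur's lemma combined with the fact that simple modules over $M_{d_i}(\C)$ are unique up to isomorphism, $\rho$ is equivalent to $\rho_i$. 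This produces a bijection between $\Y$ and the set $\{M_1, \ldots, M_n\}$, in particular showing $|\Y| = n < \infty$.

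Having matched $\Y$ with the minimal ideals, I would then assemble the map $\bigoplus_{\rho\in\Y}\rho$ and check that it is an algebra isomorphism: it is clearly an algebra homomorphism by construction, injective because an element lying in the kernel must annihilate every $M_i$ and hence be zero by semisimplicity, and surjective by a dimension count
\[
\dim_\C \C S = |S| = \sum_{\rho\in\Y} d_\rho^2 = \dim_\C \bigoplus_{\rho\in\Y} M_{d_\rho}(\C),
\]
where the middle equality is the standard consequence of Wedderburn--Artin. Finally, for the explicit formula, I would note that since every $\rho \in \Y$ is an algebra homomorphism on $\C S$, we have $\rho(f) = \sum_{s \in S} f(s) \rho(s)$ by linearity and multiplicativity, and this plugs directly into the direct-sum formula.

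I do not expect any genuine obstacle here: all the substantive input---semisimplicity of $\C S$---is supplied by Munn's theorem, and from there the argument is the standard Wedderburn--Artin template. The only minor point to handle carefully is verifying that the notion of irreducibility used in the definition (no lower-triangular block form over any subrepresentation) coincides with simplicity of the corresponding left $\C S$-module, but this is immediate from the equivalent module-theoretic formulation already noted after the definition.
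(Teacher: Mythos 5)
Your proposal is correct and is essentially the same route the paper takes: the paper gives no explicit proof, simply invoking Munn's semisimplicity result and then stating that Wedderburn's theorem therefore applies to $\C S$. Your writeup just fills in the standard Wedderburn--Artin details that the paper leaves implicit, including the bijection between irreducible representations and minimal two-sided ideals and the dimension count.
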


We also have the following formula for the sum of the squares of the dimensions of the irreducible representations of $S$.
\begin{cor}
\label{CorSumDim}
Let $S$ be a finite inverse semigroup, and let $\Y$ be a complete set of inequivalent, irreducible representations of $\C S$. Then
\begin{equation}
|S| = \sum_{\rho\in\Y} d_\rho^2.  
\label{Sum of Squares of Dimensions}
\end{equation}
\end{cor}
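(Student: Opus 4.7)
The plan is simply to compare $\C$-dimensions on both sides of the Wedderburn isomorphism stated just above the corollary. Since Wedderburn's theorem provides an algebra isomorphism
\[
\bigoplus_{\rho \in \Y} \rho : \C S \xrightarrow{\ \sim\ } \bigoplus_{\rho \in \Y} M_{d_\rho}(\C),
\]
this map is in particular a $\C$-vector space isomorphism, so the two sides have the same $\C$-dimension.

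First I would identify the left-hand dimension: the semigroup basis $\{s\}_{s \in S}$ is by definition a $\C$-basis of $\C S$, so $\dim_\C \C S = |S|$. Next I would identify the right-hand dimension: each matrix algebra $M_{d_\rho}(\C)$ has $\C$-dimension $d_\rho^2$, and the $\C$-dimension of a finite direct sum is the sum of the dimensions of the summands, so
\[
\dim_\C \Bigl( \bigoplus_{\rho \in \Y} M_{d_\rho}(\C) \Bigr) = \sum_{\rho \in \Y} d_\rho^2.
\]
Equating the two dimensions gives \eqref{Sum of Squares of Dimensions}.

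There is essentially no obstacle: the only subtlety is ensuring the right-hand sum is well-defined, i.e.\ that $\Y$ is finite, but this is already part of the statement of Wedderburn's theorem quoted above. Thus the corollary is a one-line consequence and no further machinery (no character theory, no trace computation) is required.
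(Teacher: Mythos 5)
Your proof is correct and matches the paper's: the paper's one-line proof simply observes that \eqref{Sum of Squares of Dimensions} is the equality of $\C$-dimensions of the two algebras in the Wedderburn isomorphism \eqref{WedderSemi}, which is exactly the argument you spell out.
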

\begin{proof}
The formula (\ref{Sum of Squares of Dimensions}) is just the $\C$-dimensionality of the algebras appearing in (\ref{WedderSemi}).
\end{proof}


Let $S$ be a finite inverse semigroup and let $f\in \C S$ with
\[
f=\sum_{s\in S}f(s)s. 
\]

\begin{defn}If $\rho$ is a representation of $\C S$, then we define the {\em Fourier transform of $f$ at $\rho$}, denoted by $\hat f(\rho)$, by
\[
\hat f(\rho) = \rho(f) = \sum_{s\in S}f(s)\rho(s).
\]
\end{defn}

Let $\Y$ be a complete set of inequivalent, irreducible representations of $\C S$. The map in Wedderburn's theorem obtained by ``gluing" together the elements of $\Y$ is called a {\em Fourier transform on} (or {\em for}) $S$. Specifically, we have
\begin{defn}The element of $\bigoplus_{\rho\in\Y}M_{d_\rho}(\C)$ given by
\[
\bigoplus_{\rho \in \Y}\sum_{s\in S}f(s)\rho(s)
\]
is called the {\em Fourier transform of $f$ with respect to (or relative to) $\Y$}.
\end{defn}


Consider the inverse image of the natural basis of the algebra on the right in (\ref{WedderSemi}), that is, the inverse image of the set of matrices in the algebra on the right having exactly one entry equal to $1$ (the rest being $0$). This is the target basis for the Fourier transform on $\C S$. Each block of the algebra on the right is a minimal two-sided ideal. The inverse image of the basis for a single block is a therefore a basis for a minimal two-sided ideal of $\C S$. Since the map in (\ref{WedderSemi}) is an isomorphism, the inverse images of distinct columns have intersection $\{0\}$, and this basis for $\C S$ therefore realizes the decomposition of $\C S$ into the direct sum of its minimal ideals. 

\begin{defn}Let $S$ be a finite inverse semigroup and let $\Y$ be a complete set of inequivalent, irreducible representations of $\C S$. The inverse image of the natural basis of the algebra on the right in (\ref{WedderSemi}) is called a {\em Fourier basis} for $\C S$. More specifically, it is the {\em Fourier basis for $\C S$ according to $\Y$}. It is also known as the {\em dual matrix coefficient basis for $\C S$ relative to $\Y$} \cite{Maslen}.
\end{defn}

When we refer to a Fourier basis for $\C S$, we mean any basis of $\C S$ that can arise in this manner by choosing an appropriate set of representations $\Y$. To see why we consider this is a ``normalization" condition, consider $S=\ZnZ$. Every irreducible representation of $\C \ZnZ$ has dimension 1, so the notion of equivalence of representations for $\C \ZnZ$ reduces to the notion of equality.
Specifically, the irreducible representations of $\C \ZnZ$ are the $\chi_k$ ($k=0,1,\ldots, n-1$), given on the natural basis $\ZnZ$ by
\[
\chi_k({j}) = e^{\frac{-2\pi i j k }{n}}.
\]
In this case, the isomorphism (\ref{WedderSemi}) is the usual discrete Fourier transform:
\[
\hat{f}(\chi_k) = \sum_{j=0}^{n-1} f(j)e^{\frac{-2\pi i j k }{n}},
\]
and the associated Fourier basis of $\C \ZnZ$ is the usual basis of exponential functions $\{b_k\}_{k=0}^{n-1}$, normalized so that $\hat{b_k}(\chi_k)=1$:
\[
b_k=\frac{1}{n}\sum_{j=0}^{n-1}e^{\frac{2\pi i j k}{n}} j.
\]




Here is the general convolution theorem.

\begin{thm}The Fourier transform on $S$ turns convolution into multiplication of block-diagonal matrices. The Fourier transform turns convolution into pointwise multiplication if and only if every irreducible representation of $\C S$ has dimension one.
\end{thm}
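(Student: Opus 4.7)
The plan is to prove both halves essentially by unpacking what the Fourier transform does, relying entirely on the fact that each $\rho \in \Y$ is an algebra homomorphism and that $\bigoplus_{\rho\in\Y}\rho$ is an isomorphism (Wedderburn).

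For the first statement, let $f, g \in \C S$. Because each $\rho : \C S \to M_{d_\rho}(\C)$ is an algebra homomorphism, we have $\rho(f \ast g) = \rho(f)\rho(g) = \hat f(\rho)\hat g(\rho)$. Therefore
\[
\widehat{f \ast g} \;=\; \bigoplus_{\rho \in \Y} \hat f(\rho)\hat g(\rho) \;=\; \Bigl(\bigoplus_{\rho\in\Y}\hat f(\rho)\Bigr)\Bigl(\bigoplus_{\rho\in\Y}\hat g(\rho)\Bigr),
\]
where the rightmost product is ordinary multiplication in the block-diagonal algebra $\bigoplus_{\rho\in\Y}M_{d_\rho}(\C)$. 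This is the block-diagonal multiplication statement.

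For the equivalence, interpret ``pointwise multiplication'' on $\bigoplus_{\rho\in\Y}M_{d_\rho}(\C)$ as entrywise (Hadamard) multiplication in the natural matrix-entry basis. If every $d_\rho = 1$, then each block is a copy of $\C$, and matrix multiplication in a $1 \times 1$ block is just scalar multiplication, which coincides with the entrywise product; summing across blocks gives pointwise multiplication of the Fourier transforms.

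For the converse, suppose some $\rho_0 \in \Y$ has $d_{\rho_0} \geq 2$. By Wedderburn, the map $\bigoplus_{\rho\in\Y}\rho$ is surjective, so I can choose $f, g \in \C S$ whose Fourier transforms are zero on every block except $\rho_0$, and on the $\rho_0$-block take any two matrices $A, B \in M_{d_{\rho_0}}(\C)$ I wish. Picking, for example, $A$ to be the matrix with a single $1$ in position $(1,2)$ and $B$ with a single $1$ in position $(2,1)$, the matrix product $AB$ has a $1$ in position $(1,1)$, while the Hadamard product $A \odot B$ is the zero matrix. Hence $\widehat{f \ast g} \neq \hat f \odot \hat g$, contradicting the assumption that the Fourier transform converts convolution to pointwise multiplication. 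I expect the only mild obstacle to be stating ``pointwise multiplication'' unambiguously; once it is identified with the Hadamard product on the image algebra, both directions are immediate consequences of Wedderburn and the homomorphism property.
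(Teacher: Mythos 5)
Your proof is correct, and for the first claim it is exactly the paper's one-sentence argument: the Wedderburn map is an algebra homomorphism, so convolution in $\C S$ becomes multiplication in the block-diagonal algebra $\bigoplus_{\rho\in\Y}M_{d_\rho}(\C)$. The paper states the second claim without proof; you supply the natural missing argument---in $1\times 1$ blocks matrix and entrywise products coincide, while surjectivity of the Wedderburn map lets you realize (e.g.\ via $E_{12}$ and $E_{21}$) a discrepancy between matrix and Hadamard products in any block of size $\geq 2$.
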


\begin{proof}Since the map given in Wedderburn's theorem is a homomorphism, it turns multiplication in $\C S$ (that is, convolution) into multiplication in $\bigoplus_{\rho\in\Y}M_{d_\rho}(\C)$.
\end{proof}

We begin our study of the computational complexity of the Fourier transform on $S$ by introducing some notation.

\begin{defn}Let $\Y$ be a complete set of inequivalent, irreducible representations of $\C S$. Suppose that all representations in $\Y$ are precomputed (that is, evaluated at every element of some basis of $\C S$---usually the standard $\{s\}_{s\in S}$ basis) and stored in memory. Then the maximum number of operations (where an operation is defined to be a complex multiplication together with a complex addition) needed to compute the Fourier transform of an arbitrary function $f=\sum_{s\in S}f(s)s \in \C S$ is denoted by
\[
\T_{\Y}(S).
\]
Now, let $\Y$ vary over all sets of inequivalent, irreducible representations of $\C S$. We define
\[
\Cplx(S)=\min_{\Y}\{\T_\Y(S)\}.
\]
\end{defn}

Given a particular inverse semigroup $S$, the goal is to bound $\Cplx(S)$. This is often accomplished by constructing a computationally advantageous set of representations $\Y$ of $\C S$ and proving a bound on $\T_\Y(S)$ that compares favorably to the number of operations needed to compute the Fourier transform by naive methods, as given in the following theorem.

\begin{thm}For any inverse semigroup $S$, $\Cplx(S)\leq |S|^2.$\end{thm}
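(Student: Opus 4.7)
The plan is to exhibit a naive algorithm that achieves the bound $|S|^2$, then invoke Corollary \ref{CorSumDim} to tie together the contributions from the different irreducible representations.

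First I would fix any complete set $\Y$ of inequivalent irreducible representations of $\C S$, assumed to be precomputed on the semigroup basis. For each $\rho \in \Y$, the Fourier transform at $\rho$ is the matrix
\[
\hat f(\rho) = \sum_{s \in S} f(s)\rho(s) \in M_{d_\rho}(\C).
\]
Computing a single entry $(i,j)$ of $\hat f(\rho)$ by brute force requires summing $|S|$ terms of the form $f(s)\rho(s)_{ij}$, each of which is a complex multiplication followed by a complex addition into the running total. That is exactly $|S|$ operations per matrix entry in the sense defined in the paper.

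Next I would sum over all matrix entries and all $\rho \in \Y$. The matrix $\hat f(\rho)$ has $d_\rho^2$ entries, so computing $\hat f(\rho)$ in its entirety costs at most $|S|\, d_\rho^2$ operations. Summing over $\rho \in \Y$ and applying Corollary \ref{CorSumDim} gives
\[
\T_\Y(S) \leq \sum_{\rho \in \Y} |S|\, d_\rho^2 \;=\; |S| \sum_{\rho \in \Y} d_\rho^2 \;=\; |S| \cdot |S| \;=\; |S|^2.
\]
Since this holds for any choice of $\Y$, we conclude $\Cplx(S) = \min_{\Y} \T_\Y(S) \leq |S|^2$.

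There is essentially no obstacle here: the entire argument is bookkeeping, and the only non-trivial input is the sum-of-squares identity, which is already established as a consequence of Wedderburn's theorem in the excerpt. The minor point to mention in passing is that precomputation of the $\rho(s)$ values is not counted in the operation count, consistent with the definition of $\T_\Y(S)$.
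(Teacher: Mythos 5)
Your proof is correct and follows exactly the same route as the paper's: compute each of the $d_\rho^2$ entries of $\hat f(\rho)$ naively in $|S|$ operations, sum over $\rho\in\Y$, and apply Corollary \ref{CorSumDim} to get $|S|\sum_\rho d_\rho^2 = |S|^2$. The only difference is that you spell out the per-entry bookkeeping that the paper compresses into a single line.
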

\begin{proof}Let $\Y$ be any complete set of inequivalent, irreducible representations of $\C S$. If $f=\sum_{s\in S}f(s)s\in \C S$, then a naive computation of ($\ref{WedderSemi}$) requires at most
\[
\sum_{\rho\in\Y}|S|d_\rho^2 = |S|\sum_{\rho\in\Y}d_\rho^2 = |S|^2
\]
operations, the last equality arising from corollary \ref{CorSumDim}.
\end{proof}


A {\em fast Fourier transform} (FFT) on (or for) an inverse semigroup $S$ is then an algorithm for computing the Fourier transform of an arbitrary $\C$-valued function on $S$ whose computational complexity compares favorably to that of the naive algorithm.

\section{FFTs for Inverse Semigroups---A General Approach}
\label{secGeneralApproach}
In this section, we explain a theorem of B. Steinberg \cite[Theorem 4.6]{Steinberg2} and we use it to reduce the problem of creating FFTs on finite inverse semigroups to the problems of creating FFTs on their maximal subgroups and fast zeta transforms on their poset structures. 

\subsection{The Groupoid Basis}

Let $S$ be a finite inverse semigroup. We begin by recalling the {\em natural poset structure} of 
 $S$ \cite{CliffandPres,Lawson,Steinberg2}.

\begin{defn}Let $S$ be a finite inverse semigroup. For $s, t \in S$, define
\begin{align*}s \leq t &\iff s = et \textup{ for some idempotent } e\in S \cr 
& \iff s = tf \textup{ for some idempotent } f\in S. 
\end{align*}
\end{defn}
For $R_n$, the idempotents are the restrictions of the identity map, and this ordering is the same as the ordering
\[
s\leq t \iff \up{$t$ extends $s$ as a partial function}.
\]
\noindent {\bf Remark:} If $S$ is a group, then its poset structure is trivial in the sense that $s\leq t \iff s=t$. 

If $P$ is a finite poset, then the {\em zeta function}\index{zeta function}  $\zeta$ of $P$ is given by:
\[
\zeta:P\times P \rightarrow \{0,1\}
\]
\[ \zeta(x,y) =
\begin{cases}
1 & \textup{ if }x \leq y, \\
0 & \textup{ otherwise.}\end{cases}
\]

The zeta function is invertible over any field $F$ (in fact, over any ring with unity), and its inverse is called the {\em M\"obius function}\index{M\"obius function}  $\mu$. The M\"obius function for $R_n$ over $\C$ is well known \cite{Stanley,Steinberg2}. It is, for $x\leq y$, 
\[\mu(x,y) = (-1)^{\rk(y)-\rk(x)}.\]
We have already seen one natural basis for the semigroup algebra $\C S$, the semigroup basis $\{ s \}_{s\in S}$. Multiplication in $\C S$ with respect to this basis is just the linear extension of the multiplication in $S$. In \cite{Steinberg2}, B. Steinberg defines another ``natural'' basis of $\C S$. To motivate this new basis, recall that every finite inverse semigroup is isomorphic to a subsemigroup of a rook monoid and can therefore be viewed as a collection of partial functions. There is another model for composing partial functions---only allow the composition if the range of the first function ``lines up'' with the domain of the second. For example, if
\[
\sigma=\left(
\begin{array}{cccc}
1&2&3&4\\
2&-&1&-\\
\end{array}
\right),
\qquad
\pi=\left(
\begin{array}{cccc}
1&2&3&4\\
4&3&-&-\\
\end{array}
\right),
\]
then the idea is that the composition $\pi \circ \sigma$ is
\[
\pi \circ \sigma =
\left(
\begin{array}{cccc}
1&2&3&4\\
3&-&4&-\\
\end{array}
\right),
\]
and the composition $\sigma \circ \pi$ is disallowed. The {\em groupoid basis} \index{groupoid basis} of $\C S$ encodes this.

\begin{defn}Define, for each $s \in S$, the element $\ld s \rd \in \C S$ by
\[\ld s \rd = \sum_{t\in S: t \leq s} \mu(t,s) t.\]
The collection $\{\s\}_{s\in S}$ is called the {\em groupoid basis} of $\C S$.
\end{defn}

Viewing $S$ as a subsemigroup of $R_n$, then, we have

\begin{thm} The groupoid basis is a basis for $\C S$. Multiplication in $\C S$ relative to this basis is given by the linear extension of
\[
\ld s \rd \ld t \rd = 
\begin{cases}
\ld st \rd & \textup{ if }\dom (s) = \ran (t), \\
0 & \textup { otherwise.}
\end{cases}
\]
Furthermore, the change of basis to the $\{s\}_{s\in S}$ basis of $\C S$ is given by M\"obius inversion:
\begin{equation}
\label{sissumofbrackett}
s = \sum_{t \in S: t \leq s} \ld t \rd.
\end{equation}
\end{thm}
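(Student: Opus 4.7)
The plan is to handle the three conclusions in two stages: first the basis and change-of-basis claims, then the multiplication rule. The first stage is pure Möbius inversion on $(S, \leq)$. The defining formula $\ld s \rd = \sum_{t \leq s} \mu(t,s) t$ expresses the groupoid vectors in the semigroup basis via a matrix that is upper-triangular with respect to any linear extension of $\leq$, with diagonal entries $\mu(s,s) = 1$; hence this matrix is invertible, so $\{\ld s \rd\}_{s\in S}$ is a basis. The inverse change of basis is obtained by replacing $\mu$ with its incidence-algebra inverse $\zeta$, yielding (\ref{sissumofbrackett}).

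The substance of the theorem is the product rule. I would introduce a second bilinear operation $\star$ on $\C S$ defined as the linear extension of
\[
\ld s \rd \star \ld t \rd = \begin{cases} \ld st \rd & \textup{if } \dom(s) = \ran(t), \\ 0 & \textup{otherwise,} \end{cases}
\]
and prove that $\star$ coincides with the convolution product on $\C S$. Since both operations are bilinear, it suffices to check agreement on a basis, and I would use the semigroup basis, showing $s \cdot t = s \star t$ for every $s, t \in S$. Expanding via (\ref{sissumofbrackett}) and the definition of $\star$,
\[
s \star t = \sum_{\substack{s' \leq s,\ t' \leq t \\ \dom(s') = \ran(t')}} \ld s't' \rd,
\]
while on the other hand $s \cdot t = st = \sum_{u \leq st} \ld u \rd$ by (\ref{sissumofbrackett}) applied to $st$. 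The theorem therefore reduces to exhibiting a bijection
\[
\{(s',t') : s' \leq s,\ t' \leq t,\ \dom(s') = \ran(t')\} \longleftrightarrow \{u : u \leq st\}, \quad (s',t') \mapsto s't'.
\]

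The main technical obstacle is this bijection. Viewing $S$ as a subsemigroup of $R_n$, the relation $s' \leq s$ is precisely that $s'$ is a restriction of $s$ to a subset of $\dom(s)$. Well-definedness of the map, i.e.\ $s't' \leq st$, is immediate from the fact that $s'$ and $t'$ agree with $s$ and $t$ on their respective domains, and the composability condition $\dom(s') = \ran(t')$ guarantees that $\dom(s't') = \dom(t')$ (no element is lost in the partial composition). For the inverse, given $u \leq st$, I would set $t' := t|_{\dom(u)}$ and $s' := s|_{t(\dom(u))}$; a short check confirms $\dom(s') = \ran(t')$, $s't' = u$, and uniqueness of the preimage among composable pairs. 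Once the bijection is established, the two displayed sums for $s \star t$ and $s \cdot t$ match term by term, the equality $\star = \cdot$ follows, and with it the multiplication rule.
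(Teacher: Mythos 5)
The paper itself gives no proof here: it simply cites Steinberg's Lemma 4.1 and Theorem 4.2. Your argument is therefore a self-contained reconstruction, and it is essentially the right one: Möbius inversion on the finite poset $(S,\leq)$ gives invertibility of the change-of-basis matrix and the formula $s=\sum_{t\leq s}\ld t\rd$, and the product rule reduces (after expanding both factors in the groupoid basis) to the bijective correspondence you describe. The forward direction and the uniqueness check are fine.

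There is one point the write-up glosses over, and it is exactly the place where the hypothesis ``$S$ is an inverse semigroup'' rather than merely a set of partial bijections must be used. Both sums
\[
s\star t=\sum_{\substack{s'\leq s,\ t'\leq t\\ \dom(s')=\ran(t')}}\ld s't'\rd
\qquad\text{and}\qquad
st=\sum_{u\leq st}\ld u\rd
\]
are indexed over elements of $S$, so the bijection must carry pairs of elements of $S$ to elements of $S$ and back. In the forward direction this is automatic, since $S$ is closed under multiplication. But in the inverse direction you define $t':=t|_{\dom(u)}$ and $s':=s|_{t(\dom(u))}$ purely as partial functions, and you never argue that these lie in $S$. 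They do, but it takes a sentence: $u^{-1}u$ is the partial identity on $\dom(u)$ and lies in $S$ because $S$ is an inverse subsemigroup, so $t'=t\,(u^{-1}u)\in S$; likewise $t'(t')^{-1}$ is the partial identity on $\ran(t')=t(\dom(u))$ and lies in $S$, so $s'=s\,(t'(t')^{-1})\in S$. Relatedly, you implicitly use that the order $\leq$ on $S$ agrees with restriction in $R_n$; this also deserves a word, and follows for the same reason (if $s$ is a restriction of $t$ with $s,t\in S$, then $s=(ss^{-1})t$ with $ss^{-1}\in S$). With those two small additions the argument is complete.
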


\begin{proof}
This is \cite[Lemma 4.1 and Theorem 4.2]{Steinberg2}, using our convention that maps act on the left of sets.
\end{proof}

The notions of $\dom(s)$ and $\ran(s)$ may also be defined intrinsically in terms of $S$, i.e., without reference to an embedding of $S$ into $R_n$. Specifically, for any element $s\in S$, $ss^{-1}$ and $s^{-1}s$ are idempotent, and one may define \index{domain}\index{range}
\begin{align*}
&\dom(s)=s^{-1}s \\
&\ran(s)=ss^{-1}.
\end{align*}
If we use this definition for $s\in R_n$, we see that $\dom(s)$ is actually {\em not} the domain of $s$, but is rather the map which is the identity on the domain of $s$ and undefined elsewhere, and likewise for $\ran(s)$. This means that we are abusing the distinction between the domain and range of a map and the corresponding partial identities. Under this definition, we have that the groupoid basis of $\C S$ multiplies as follows:
\begin{equation}
\label{GroupoidMult}
\ld s \rd \ld t \rd = 
\begin{cases}
\ld st \rd & \textup{ if }s^{-1}s = tt^{-1}, \\
0 & \textup { otherwise.}
\end{cases}
\end{equation}

The viewpoint, then, is that we have two ``natural bases'' for $\C S$, the semigroup basis $\{s\}_{s \in S}$ and the groupoid basis $\{\ld s\rd \}_{s \in S}$. (However, note that if $S$ is a group, then $s = \s \in \C S$ for all $s\in S$, so group algebras only have one natural basis).

Our goal is to construct an efficient change of basis from the semigroup basis of $\C S$ to a Fourier basis. We will do so by constructing an efficient change of basis from the semigroup basis of $\C S$ to the groupoid basis of $\C S$ (that is, a {\em fast zeta transform} on $S$) and an efficient change of basis from the groupoid basis of $\C S$ to a Fourier basis, so that the composition of these two changes of basis will be an FFT for $S$. We will focus on the second of these changes of basis first. In order to do so, we must first understand how the groupoid basis realizes the decomposition of $\C S$ into a direct sum of matrix algebras over group algebras.

\subsection{Matrix Algebras Over Group Algebras}

Given an element $s\in S$, it is natural to think of $s$ as an ``isomorphism'' from $\dom(s)$ to $\ran(s)$. As in \cite{Steinberg2}, we use this to define the notion of {\em isomorphic idempotents}. 
\begin{defn}If $a,b\in S$ are idempotent, then $a$ and $b$ are said to be {\em isomorphic} idempotents if there is an ``isomorphism'' from $a$ to $b$, that is, if there is an element $s\in S$ such that $a=s^{-1}s$ and $b=ss^{-1}$. 
\end{defn}
Let us now define two idempotents in $S$ to be {\em $\D$-related} if they are isomorphic. For the rook monoid $R_n$, the idempotents are the restrictions of the identity map, and two idempotents are isomorphic if and only if they have the same rank. We extend $\D$ to an equivalence relation on $S$ by defining $s\D t$ if $s^{-1}s$ is isomorphic to $t^{-1}t$ (or, equivalently, if $ss^{-1}$ is isomorphic to $tt^{-1}$). This is Green's famous $\D$-relation \cite{CliffandPres,Green}, and the equivalence classes of $S$ with respect to this relation are called the $\D$-classes of $S$. We mention that an equivalent characterization of $\D$ is that $s\D t$ if and only if $s$ and $t$ generate the same two-sided ideal in $S$. 
For $R_n$, there are $n+1$ $\D$-classes. They are $D_0,D_1,\ldots,D_n$, where $D_k$ is the set of elements of $R_n$ of rank $k$.

Let $e\in S$ be idempotent. Let $G_e$ be the {\em maximal subgroup at $e$}\index{maximal subgroup}, that is, the largest subset of $S$ which contains $e$ and which is also a group. It is easy to see that
\[
G_e = \{s\in S: s^{-1}s=ss^{-1}=e\},
\]
and that $e$ is the identity of $G_e$. If $a,b$ are isomorphic idempotents, it is straightforward to show that $G_a\cong G_b$. For $R_n$, the maximal subgroup at an idempotent $e$ of rank $k$ is isomorphic to the permutation group $S_k$.

Now, let us describe the decomposition of the semigroup algebra $\C S$ into a direct sum of matrix algebras over group algebras. This is B. Steinberg's result \cite[Theorem 4.5]{Steinberg2}, and we include the proof because the construction of the isomorphism involved is important in the construction of the FFTs to come. Let $D_0,\ldots,D_n$ be the $\D$-classes of $S$. Let $\C D_k$ be the $\C$\up{-}\textup{span} of $\{\s:s\in D_k\}$. It is immediate from (\ref{GroupoidMult}) that $\C S=\bigoplus_{k=0}^n\C D_k$.
\begin{thm}[B. Steinberg]
\label{BigThm}
Let $r_k$ indicate the number of idempotents in $D_k$, and let $e_k$ be any idempotent in $D_k$. Denote the maximal subgroup of $S$ at $e_k$ by $G_k$. Then, as algebras,
$\C D_k\cong M_{r_k}(\C G_k).$
\end{thm}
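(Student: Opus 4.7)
The plan is to construct an explicit algebra isomorphism $\Phi : \C D_k \to M_{r_k}(\C G_k)$ by exploiting the groupoid multiplication rule (\ref{GroupoidMult}). First I would enumerate the idempotents of $D_k$ as $f_1, \ldots, f_{r_k}$ with $f_1 = e_k$. Since every $f_i$ is isomorphic to $e_k$ in the sense defined above, I may choose for each $i$ a connecting element $h_i \in S$ satisfying $h_i^{-1} h_i = e_k$ and $h_i h_i^{-1} = f_i$ (taking $h_1 = e_k$). For any $s \in D_k$, the relation $s \D e_k$ forces $s^{-1}s$ and $ss^{-1}$ to be idempotents in $D_k$, so I may write $s^{-1}s = f_i$ and $ss^{-1} = f_j$ uniquely, and then set $g(s) := h_j^{-1} s h_i$. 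A short calculation using $(abc)^{-1} = c^{-1} b^{-1} a^{-1}$, the identity $ss^{-1}s = s$, and the commutativity of idempotents shows $g(s)^{-1} g(s) = g(s) g(s)^{-1} = e_k$, so $g(s) \in G_k$.

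Next I would define $\Phi(\s) := g(s) \cdot E_{j,i}$ on the groupoid basis, where $E_{j,i}$ denotes the standard matrix unit in $M_{r_k}(\C)$, and extend linearly. The assignment $s \mapsto (g(s), i, j)$ is a bijection $D_k \to G_k \times \{1,\ldots,r_k\}^2$ with inverse $(g,i,j) \mapsto h_j g h_i^{-1}$, so $\Phi$ already sends the groupoid basis of $\C D_k$ to a basis of $M_{r_k}(\C G_k)$ and is a vector-space isomorphism. The substantive check is multiplicativity: for $s, t \in D_k$ with $s^{-1}s = f_i$, $ss^{-1} = f_j$, $t^{-1}t = f_l$, $tt^{-1} = f_m$, rule (\ref{GroupoidMult}) gives $\s \ld t \rd = 0$ unless $i = m$, while when $i = m$ I expand $st = h_j g(s) (h_i^{-1} h_i) g(t) h_l^{-1} = h_j (g(s) g(t)) h_l^{-1}$, so $g(st) = g(s) g(t)$. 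This mirrors the matrix-unit rule $E_{j,i} E_{m,l} = \delta_{i,m} E_{j,l}$ exactly, and both cases combine to give $\Phi(\s) \Phi(\ld t \rd) = \Phi(\s \ld t \rd)$.

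I expect the main obstacle to be precisely this compatibility check: it hinges on choosing the $h_i$ so that the middle factor $h_i^{-1} h_i$ collapses to the identity $e_k$ of $G_k$ inside the product $st$, which is exactly what the ``isomorphism'' requirement on the $h_i$ provides, while the matching of indices $i = m$ on the semigroup side aligns perfectly with the Kronecker $\delta_{i,m}$ on the matrix side. Once multiplicativity is in hand, $\Phi$ is an algebra homomorphism that is also a linear bijection, hence an algebra isomorphism $\C D_k \cong M_{r_k}(\C G_k)$, as claimed.
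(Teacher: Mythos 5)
Your proposal is correct and follows essentially the same path as the paper's proof: you choose connecting elements $h_i$ (the paper's $p_a$), conjugate $s$ by them to land in $G_k$, tensor with a matrix unit indexed by $(\mathrm{range},\mathrm{domain})$, and verify the homomorphism property against the groupoid multiplication rule. The only cosmetic difference is that you index matrix entries by $\{1,\ldots,r_k\}$ rather than by the idempotents themselves, and you spell out the multiplicativity check which the paper leaves as a quick calculation.
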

\begin{proof}
We already know that $G_a$ and $G_b$ are isomorphic for any idempotents \mbox{$a,b\in D_k$}. Now, fix an idempotent $e_k\in D_k,$ and for every idempotent $a\in D_k$, fix an element $p_a\in S$ such that ${p_a}^{-1}p_a=e_k$ and $p_a{p_a}^{-1}=a$ (that is, $p_a$ is an isomorphism from $e_k$ to $a$). 
Let us take $p_{e_k}=e_k$. It is easy to show that, in fact, $p_a\in D_k$. We view our $r_k\times r_k$ matrices as being indexed by pairs of idempotents in $D_k$. We now define our isomorphism by defining it on the basis $\{\s:s\in D_k\}$ of $\C D_k$ and extending linearly. So, for an element $\s\in \C D_k$ with $s^{-1}s=a$ and $ss^{-1}=b$, define
\[
\phi(\s) = {p_b}^{-1}sp_{a} E_{b,a},
\]
where $E_{b,a}$ is the standard $r_k\times r_k$ matrix with a 1 in the $b,a$ position and 0 elsewhere.

A quick calculation shows that ${p_b}^{-1}sp_a \in G_k$ by construction. It is straightforward to show that $\phi$ is an isomorphism, with the inverse induced by, for $s\in G_k$,
\[
sE_{b,a} \mapsto \ld p_b s {p_a}^{-1}  \rd.
\]
\end{proof}

The corollary is:
\begin{cor}$\C S\cong\bigoplus_{k=0}^n M_{r_k}(\C G_k)$.
\end{cor}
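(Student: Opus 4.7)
The plan is to assemble the corollary directly from two facts already in hand. The first is the observation, made just before Theorem~\ref{BigThm}, that $\C S = \bigoplus_{k=0}^n \C D_k$. To justify this as an algebra decomposition, I would invoke the groupoid multiplication rule (\ref{GroupoidMult}): if $\ld s \rd \ld t \rd \neq 0$, then $s^{-1}s = tt^{-1}$, from which a short computation gives $(st)(st)^{-1} = ss^{-1}$ and $(st)^{-1}(st) = t^{-1}t$. Consequently $st \D s$ and $st \D t$, so in particular $s \D t$, and the product $\ld st \rd$ lies in the common $\D$-class of $s$ and $t$. Hence products of groupoid basis elements from distinct $\D$-classes vanish, each $\C D_k$ is a two-sided ideal, and the direct sum decomposition is a decomposition of algebras.

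The second ingredient is Theorem~\ref{BigThm} itself, which supplies an explicit algebra isomorphism $\phi_k : \C D_k \xrightarrow{\sim} M_{r_k}(\C G_k)$ for each $k$. Taking the direct sum $\bigoplus_{k=0}^n \phi_k$ yields the claimed isomorphism
\[
\C S \;=\; \bigoplus_{k=0}^n \C D_k \;\cong\; \bigoplus_{k=0}^n M_{r_k}(\C G_k).
\]

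Since this is a direct corollary of the preceding theorem, there is no real obstacle; the only step requiring any actual work is the verification that the $\C D_k$ are ideals rather than merely subspaces, and that verification amounts to the short idempotent computation above. Everything else is bookkeeping.
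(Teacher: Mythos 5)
Your proof is correct and follows the same route the paper intends: the paper declares the algebra decomposition $\C S = \bigoplus_k \C D_k$ to be ``immediate from (\ref{GroupoidMult})'' and then cites Theorem~\ref{BigThm} for each summand, which is exactly what you do. Your only addition is spelling out the short idempotent computation showing each $\C D_k$ is a two-sided ideal, which the paper leaves implicit.
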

A dimensionality count thus establishes
\begin{equation}
\label{SizeOfSMaxSubgps}
|S| = \sum_{k=0}^n r_k^2 |G_k|.
\end{equation}

Since we will construct an FFT for the rook monoid $R_n$ in Section \ref{SecRookFFT}, for clarity's sake we now explain what the isomorphism constructed in the proof of Theorem \ref{BigThm} translates into when $S=R_n$. Fix a $\D$-class $D_k$ (that is, the subset of elements of $R_n$ of rank $k$), and let us take $e_k \in D_k$ to be the partial identity on $\{1,\ldots,k\}$, that is
\[
e_k=\left(\begin{array}{ccccccc}
1&2&\cdots&k&k{+}1&\cdots&n \\
1&2&\cdots&k&-&\cdots&-
\end{array}\right).
\] 
We then have
\[
G_k=\{s\in R_n:\dom(s)=\ran(s)=\onetok\}.
\]
Let us identify $G_k$ with the permutation group $S_k$ in the obvious manner.

For an idempotent $a \in D_k$ (that is, a rank-$k$ restriction of the identity map), let us take $p_a$ to be the unique order-preserving bijection from $\onetok$ to \mbox{$\dom(a)=\ran(a)$}. For an element $s\in R_n$ of rank $k$, let us define the {\em permutation type}\index{permutation type} of $s$, $\perm(s)$, to be, informally, the ``arrows'' from $\dom(s)$ to $\ran(s)$, expressed as a permutation in $G_k=S_k$. For example, if 
\[
s=\left(\begin{array}{cccc}
1&2&3&4\\
4&-&1&2
\end{array}\right),
\textup{ then }
\perm(s) = \left(\begin{array}{ccc}
1&2&3\\
3&1&2
\end{array}\right)
\]
because $s$ sends the first element of its domain to the third element of its range, the second element of its domain to the first element of its range, and the third element of its domain to the second element of its range.

Formally, we define
\[
\perm(s)= {p_{\ran(s)}}^{-1} s p_{\dom(s)},
\]
where $\dom(s)$ and $\ran(s)$ are once again understood to be the corresponding partial identities in $R_n$, so that $p_{\dom(s)}$ is the unique order preserving bijection from $\onetok$ to $\dom(s)$ and ${p_{\ran(s)}}^{-1}$ is the unique order preserving bijection from $\ran(s)$ to $\onetok$.

The isomorphism $\phi$ defined in the proof of Theorem \ref{BigThm} now works as follows. We have $\binom{n}{k}\times \binom{n}{k}$ matrices, so let us index their rows and columns by the $k$-subsets of $\oneton$. We have
\begin{align*}
\C D_k &\cong M_{\binom{n}{k}}(\C S_k)\\
\textup{by } \phi(\s) &= \perm(s)E_{\ran(s),\dom(s)}.
\end{align*}

Therefore, we have:
\begin{cor}
\label{CorRnIsomSks}
$\C R_n\cong \bigoplus_{k=0}^n M_{\binom{n}{k}}(\C S_k).$
\end{cor}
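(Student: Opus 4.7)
The plan is to obtain this simply by specializing Theorem \ref{BigThm} to $S = R_n$, so the only real work is to identify $r_k$ and $G_k$ in this setting. Since Theorem \ref{BigThm} (together with its corollary $\C S \cong \bigoplus_k M_{r_k}(\C G_k)$) is already proved in full generality, Corollary \ref{CorRnIsomSks} will follow once we exhibit the $\D$-class decomposition of $R_n$ together with the relevant counts.

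First, I would recall from the discussion preceding Theorem \ref{BigThm} that the $\D$-classes of $R_n$ are exactly $D_0, D_1, \ldots, D_n$, where $D_k$ consists of the rank-$k$ elements; this was established by observing that the idempotents of $R_n$ are the restrictions of the identity, and two such idempotents are isomorphic (in the sense of the definition above Theorem \ref{BigThm}) precisely when they have the same rank. Next, I would count the idempotents in $D_k$: a rank-$k$ idempotent of $R_n$ is determined by its domain, which is an arbitrary $k$-subset of $\{1,\ldots,n\}$, so $r_k = \binom{n}{k}$.

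Then I would identify the maximal subgroup $G_k$ at the particular idempotent $e_k$ chosen in the excerpt (the partial identity on $\{1,\ldots,k\}$). By the description $G_{e_k} = \{s \in R_n : s^{-1}s = ss^{-1} = e_k\}$, the elements of $G_k$ are precisely the bijections from $\{1,\ldots,k\}$ to itself, so $G_k \cong S_k$ as groups, and hence $\C G_k \cong \C S_k$ as algebras.

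Substituting $r_k = \binom{n}{k}$ and $\C G_k \cong \C S_k$ into the corollary $\C S \cong \bigoplus_{k=0}^n M_{r_k}(\C G_k)$ yields the desired isomorphism. No step here is really an obstacle, since Theorem \ref{BigThm} does the heavy lifting; the only points requiring care are the explicit descriptions of the $\D$-classes and the maximal subgroups of $R_n$, both of which have already been noted in the discussion leading up to this corollary.
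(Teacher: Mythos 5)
Your proof is correct and follows the same route as the paper: specialize the corollary of Theorem~\ref{BigThm} to $S = R_n$ by observing that the $\D$-classes are the rank strata $D_0, \ldots, D_n$, that $r_k = \binom{n}{k}$ (idempotents in $D_k$ are determined by their domains), and that $G_k \cong S_k$. The paper's discussion additionally spells out the explicit isomorphism $\phi(\ld s \rd) = \perm(s) E_{\ran(s), \dom(s)}$ because it is needed for the FFT construction in Section~\ref{SecRookFFT}, but for the corollary itself your argument is exactly what is intended.
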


\noindent{\bf Remark:} This result was implicit in the work of Munn \cite{Munn3} and was first written down explicitly by Solomon \cite{Solomon}. Solomon's isomorphism is essentially the same as the one given here.

\subsection{FFTs for Inverse Semigroups}

We can now give a bound on the number of operations needed to change from the groupoid basis of $\C S$ to a Fourier basis of $\C S$. 

\begin{thm}
\label{ThmGroupoidToFourier}
Let $S$ be a finite inverse semigroup with $\D$-classes $D_0,\ldots,D_n$. For each $\D$-class $D_k$, choose an idempotent $e_k$, and let $G_k$ be the maximal subgroup of $S$ at $e_k$. Let $r_k$ denote the number of idempotents in $D_k$. Let $v\in \C S$ be given with respect to the groupoid basis, that is
\[
v=\sum_{s\in S}v(s)\s.
\]
Then the number of operations needed to compute the Fourier transform of $v$ is no more than
\[
\sum_{k=0}^n r_k^2 \Cplx(G_k).
\]
\end{thm}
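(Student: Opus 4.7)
The plan is to leverage the decomposition $\C S = \bigoplus_{k=0}^n \C D_k$ together with Steinberg's isomorphism $\C D_k \cong M_{r_k}(\C G_k)$ from Theorem \ref{BigThm}. Since the groupoid basis $\{\s\}_{s\in S}$ splits along $\D$-classes (each $\s \in \C D_k$ for $s \in D_k$), the Fourier transform can be computed independently on each summand $\C D_k$, and the total cost is the sum of the costs on each $\D$-class.

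First I would restrict attention to one $\D$-class $D_k$. Write $v_k = \sum_{s\in D_k} v(s)\s$. For each idempotent $a \in D_k$, choose $p_a \in S$ with ${p_a}^{-1}p_a = e_k$ and $p_a{p_a}^{-1}=a$, as in the proof of Theorem \ref{BigThm}. Every $s \in D_k$ with $s^{-1}s = a$ and $ss^{-1}=b$ can be written uniquely as $s = p_b g {p_a}^{-1}$ for some $g \in G_k$, and $\phi(\s) = g\, E_{b,a}$. Thus under Steinberg's isomorphism $\phi : \C D_k \to M_{r_k}(\C G_k)$,
\[
\phi(v_k) \;=\; \sum_{a,b} \Bigl(\sum_{g\in G_k} v(p_b g {p_a}^{-1})\, g\Bigr) E_{b,a},
\]
so the $(b,a)$-entry of $\phi(v_k)$ is an element of $\C G_k$. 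Crucially, this step is a mere relabeling of the coordinates of $v_k$ and requires no arithmetic operations.

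Next, Wedderburn's theorem for $\C G_k$ gives $\C G_k \cong \bigoplus_{\rho} M_{d_\rho}(\C)$, and applying this entrywise yields
\[
M_{r_k}(\C G_k) \;\cong\; \bigoplus_{\rho} M_{r_k}\bigl(M_{d_\rho}(\C)\bigr) \;\cong\; \bigoplus_{\rho} M_{r_k d_\rho}(\C),
\]
which is precisely the Wedderburn decomposition of $\C D_k$ into matrix algebras indexed by the irreducible representations of $G_k$. Hence computing the Fourier transform of $v_k$ amounts to applying the Fourier transform on $\C G_k$ to each of the $r_k^2$ entries of the matrix $\phi(v_k)$ and then assembling the results. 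By definition of $\Cplx(G_k)$, each such entrywise Fourier transform costs at most $\Cplx(G_k)$ operations, so the cost on $D_k$ is at most $r_k^2 \Cplx(G_k)$.

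Summing over the $\D$-classes yields the claimed bound $\sum_{k=0}^n r_k^2 \Cplx(G_k)$. The only subtle point — and the one I would be most careful with in the write-up — is the verification that the composite isomorphism $\C S \cong \bigoplus_k M_{r_k}(\C G_k) \cong \bigoplus_k \bigoplus_\rho M_{r_k d_\rho}(\C)$ realizes a Fourier basis for $\C S$ in the sense of the earlier definitions, so that the algorithm really is computing \emph{a} Fourier transform on $S$. This follows because the right-hand side has the form $\bigoplus_\sigma M_{d_\sigma}(\C)$ required by Wedderburn, and the inverse image of the standard matrix-unit basis on the right pulls back to a Fourier basis of $\C S$ by the definition following equation (\ref{WedderSemi}).
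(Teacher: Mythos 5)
Your proposal is correct and follows essentially the same route as the paper's own proof: both rely on Steinberg's isomorphism $\C D_k \cong M_{r_k}(\C G_k)$ from Theorem \ref{BigThm}, both observe that the groupoid-basis coordinates assemble into $r_k^2$ group-algebra elements $\sum_{g\in G_k} v(p_b g p_a^{-1})\,g$ for free, and both charge $\Cplx(G_k)$ per entry for the group Fourier transforms, with the final block assembly costing no operations. The only cosmetic difference is one of emphasis — you describe the algorithm as ``apply $\phi$, then apply group FFTs entrywise,'' whereas the paper explicitly constructs the tensored-up representations $\bar\rho$ of $\C S$, shows they form a complete set of irreducibles $\Y$, and identifies the $(b,a)$-block of $\hat v(\bar\rho)$ as a group Fourier transform — but these describe identical computations and your closing remark correctly flags the one point (that the composite map really is the Wedderburn map for some $\Y$) that needs to be checked.
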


\begin{proof}
For each idempotent $a\in D_k$, fix an element $p_a\in S$ such that $p_a^{-1}p_a = e_k$ and $p_ap_a^{-1}=a$ (and take $p_{e_k}=e_k$). By the proof of Theorem \ref{BigThm}, this defines the isomorphism
\[
\C S\cong \bigoplus_{k=0}^n M_{r_k}(\C G_k).
\]
Let $\IRR(G_k)$ be any set of inequivalent, irreducible matrix representations of $\C G_k$. It is clear from this isomorphism that the irreducible representations of $\C S$ are in one-to-one correspondence with $\biguplus_{k=0}^n \IRR(G_k)$. Specifically, given a representation $\rho$ of $\C G_k$, we tensor it up to a representation of $M_{r_k}(\C G_k)$ and extend to $\C S$ by letting it be zero on the other summands and following the isomorphism. The resulting representation $\bar\rho$ of $\C S$ is thus given by the linear extension of
\[
\bar\rho(\s)=\begin{cases}
\bar\rho({p_{ss^{-1}}}^{-1}sp_{s^{-1}s}E_{{ss^{-1}},{s^{-1}s}}) = E_{{ss^{-1}},{s^{-1}s}}\otimes\rho({p_{ss^{-1}}}^{-1}sp_{s^{-1}s}) & \textup{if }s\in D_k, \\
0 & \textup{otherwise.}
\end{cases}
\]
Furthermore, the collection $\Y=\{\bar\rho:\rho\in\biguplus_{k=0}^n \IRR(G_k)\}$ forms a complete set of inequivalent, irreducible matrix representations of $\C S$. We will use this set of representations to obtain our bound. Let $\bar\rho \in \Y$, and suppose $\bar\rho$ was obtained by tensoring up a representation $\rho$ in $\IRR(G_k)$. Then
\[
\hat v(\bar\rho) = \bar\rho(v) = \sum_{s\in S}v(s)\bar\rho(\s) = \sum_{s\in D_k}v(s)\bar\rho(\s),
\]
the last equality arising from the fact that $\bar\rho$ is identically zero off of $\C D_k$. Now, let us view $\bar\rho(v)$ as an $r_k \times r_k$ matrix with entries in $d_\rho \times d_\rho$ matrices (so we are viewing the rows and columns of $\bar\rho(v)$ as indexed by the idempotents in $D_k$). For idempotents $a,b \in D_k$, the $b,a$ entry of $\bar\rho(v)$ is then the $d_\rho \times d_\rho$ matrix
\[
\bar\rho(v)_{b,a} = \sum_{\substack{s\in D_k:\\ss^{-1}=b\\s^{-1}s=a}}v(s)\rho({p_b}^{-1}sp_a).
\]
By the proof of Theorem \ref{BigThm}, this is the same as
\begin{equation}
\sum_{s\in G_k}v(p_b s {p_a}^{-1})\rho(s).
\label{GroupoidFFT}
\end{equation}
If we define a function $h_{b,a}:G_k\rightarrow \C$ by 
\[
h_{b,a}(s)=v(p_b s {p_a}^{-1}), 
\]
we see that (\ref{GroupoidFFT}) is just the Fourier transform of the function $h_{b,a}$ on the group $G_k$ at $\rho$. Notice that this holds regardless of the choice of $\IRR(G_k)$. Furthermore, once $\IRR(G_k)$ is chosen, it is clear from the above argument that the collection \mbox{$\{ \widehat{h_{b,a}}(\rho):\rho\in \IRR(G_k)\}$} 
consists exactly of the blocks that compose $\{\hat v(\bar\rho): \rho\in \IRR(G_k)\}$. An algorithm for computing the Fourier transform of $v$ thus presents itself---for each $\D$-class $D_k$, run $r_k^2$ Fourier transforms on $G_k$, and then arrange the results into block form to construct the $\hat v(\bar\rho)$. The latter step can be done for free in our computational model because it requires no operations. Thus, the number of operations required to compute the Fourier transform of $v$ with respect to $\Y$ is no more than
\[
\sum_{k=0}^n r_k^2 \T_{\IRR(G_k)}(G_k).
\]
Since we can choose the $\IRR(G_k)$ at will, choosing them to be in their most computationally advantageous forms for computing Fourier transforms on the $G_k$ reduces our bound on the number of operations needed to compute the Fourier transform of $v$ by this approach to
\[
\sum_{k=0}^n r_k^2 \Cplx(G_k).
\]
\end{proof}

Let us denote by $\Cplx(\zeta_S)$ the maximum number of operations to perform a zeta transform on $\C S$ (that is, the maximum number of operations needed to re-express an arbitrary element of $\C S$ with respect to the groupoid basis of $\C S$, given its expression in terms of the semigroup basis). Our main result follows immediately.

\begin{thm}\label{MyBigThm}
Let $S$ be a finite inverse semigroup with $\D$-classes $D_0,\ldots,D_n$. For each $\D$-class $D_k$, choose an idempotent $e_k$, and let $G_k$ be the maximal subgroup of $S$ at $e_k$. Let $r_k$ denote the number of idempotents in $D_k$. Then
\[
\Cplx(S) \leq \Cplx(\zeta_S) + \sum_{k=0}^n r_k^2 \Cplx(G_k).
\]
\end{thm}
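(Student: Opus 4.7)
The plan is to exhibit an explicit two-stage algorithm that starts from an input $f \in \C S$ in the semigroup basis and produces its Fourier transform, and then to bound its operation count by the two summands on the right-hand side.

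First I would invoke the definition of $\Cplx(\zeta_S)$: given $f = \sum_{s\in S} f(s) s$ expressed in the semigroup basis, we apply the zeta transform to re-express $f$ as $f = \sum_{s\in S} v(s)\s$ in the groupoid basis. By definition this step uses at most $\Cplx(\zeta_S)$ operations. (Implicitly this uses the fact, already recorded in the excerpt via equation (\ref{sissumofbrackett}) and Möbius inversion, that the groupoid basis and the semigroup basis are related by an invertible linear map, so the change of coordinates is well-defined.)

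Second, having $f$ in hand in the groupoid basis, I would apply Theorem \ref{ThmGroupoidToFourier} verbatim. That theorem produces, for any choices of idempotents $e_k$ and transition elements $p_a$, a complete set $\Y$ of inequivalent irreducible representations of $\C S$ and an algorithm that computes $\hat f(\bar\rho)$ for every $\bar\rho \in \Y$ in at most $\sum_{k=0}^n r_k^2 \Cplx(G_k)$ operations.

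Composing the two stages gives an algorithm that, from the semigroup-basis expression of an arbitrary $f \in \C S$, computes its Fourier transform with respect to $\Y$ in at most
\[
\Cplx(\zeta_S) + \sum_{k=0}^n r_k^2 \Cplx(G_k)
\]
operations. Since $\Cplx(S) = \min_\Y \T_\Y(S)$, and this particular $\Y$ achieves the above bound, we obtain the desired inequality. There is essentially no obstacle here: all the real content has been done in Theorem \ref{ThmGroupoidToFourier}, and the present result is simply the observation that the semigroup-basis-to-Fourier-basis change of coordinates factors through the groupoid basis, so its operation count is bounded by the sum of the two individual operation counts.
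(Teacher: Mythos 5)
Your proof is correct and takes exactly the paper's route: the paper's own "proof" consists of a single sentence ("Our main result follows immediately") after defining $\Cplx(\zeta_S)$, because the two-stage factorization through the groupoid basis—zeta transform, then Theorem \ref{ThmGroupoidToFourier}—is precisely what the paper intends. Your spelled-out version simply makes the implicit composition explicit.
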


The following corollary concerns the case when ``good" FFTs (that is, $c|G|\log^d|G|$-complexity FFTs) are known for every maximal subgroup $G$ of $S$.

\begin{cor}Suppose $S$ is a finite inverse semigroup with $\D$-classes $D_0,\ldots,D_n$. Let $r_k$ be the number of idempotents in $D_k$. Choose an idempotent $e_k$ from each $\D$-class $D_k$, and let $G_k$ be the maximal subgroup at $e_k$. If $\Cplx(G_k) \leq c_k|G_k| \log^{d_k} |G_k|$ for all $k$, then for some constants $c,d$ we have
\[
\Cplx(S) \leq \Cplx(\zeta_S) + c|S|\log^d|S|.
\]
\end{cor}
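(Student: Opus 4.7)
The plan is to apply Theorem \ref{MyBigThm} directly and then collapse the resulting sum using the dimension identity (\ref{SizeOfSMaxSubgps}), namely $|S| = \sum_{k=0}^n r_k^2 |G_k|$. Since the hypothesis supplies group-algebra FFT bounds for every maximal subgroup $G_k$, the argument amounts to choosing uniform constants $c$ and $d$ that dominate every $c_k$ and $d_k$ simultaneously.

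First I would set $c_0 = \max_{0\le k \le n} c_k$ and $d = \max_{0\le k \le n} d_k$. Substituting the hypothesized bounds into Theorem \ref{MyBigThm} gives
$$\Cplx(S) \leq \Cplx(\zeta_S) + \sum_{k=0}^n r_k^2 c_k |G_k| \log^{d_k}|G_k| \leq \Cplx(\zeta_S) + c_0 \sum_{k=0}^n r_k^2 |G_k| \log^{d_k}|G_k|.$$
Next, since $|G_k| \leq |S|$ and only finitely many $\D$-classes appear, there is a universal constant $C$ (depending on the $c_k$ and $d_k$) for which $\log^{d_k}|G_k| \leq C \log^d|S|$ for every $k$; this is immediate when $|G_k|\ge 2$ and $d_k = d$, while the remaining cases (such as $|G_k|=1$, or $d_k<d$ paired with $\log|G_k|<1$) contribute only a bounded number of low-order terms that can be absorbed into $C$. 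Applying this bound and then (\ref{SizeOfSMaxSubgps}) yields
$$\sum_{k=0}^n r_k^2 |G_k|\log^{d_k}|G_k| \leq C\log^d|S| \cdot \sum_{k=0}^n r_k^2 |G_k| = C|S|\log^d|S|,$$
so with $c = c_0 C$ the claim follows.

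There is no substantive obstacle. The corollary is a formal consequence of Theorem \ref{MyBigThm} combined with the structural identity (\ref{SizeOfSMaxSubgps}); all of the representation-theoretic content was already packaged into the main theorem, and only the bookkeeping of uniform constants and the handling of a few degenerate $\D$-classes require attention.
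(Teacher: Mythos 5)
Your argument is correct and follows the same route as the paper's proof: apply Theorem \ref{MyBigThm}, replace each $c_k,d_k$ by uniform maxima, bound $\log|G_k|$ by $\log|S|$, and collapse the sum via (\ref{SizeOfSMaxSubgps}). Your extra care with the degenerate cases ($|G_k|\le 2$, or $d_k<d$ paired with $\log|G_k|<1$) is actually a small refinement over the paper, which tacitly assumes $\log^{d_k}|G_k|\leq\log^{d}|G_k|$; that inequality can fail when $\log|G_k|<1$, and your constant $C$ patches this cleanly.
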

\begin{proof}
Let $c=\max_{k=0}^n c_k$, and let $d=\max_{k=0}^n d_k$. Then
\begin{align*}
\Cplx(S) & \leq \Cplx(\zeta_S) + \sum_{k=0}^n r_k^2 c |G_k|\log^d|G_k| \\
& \leq \Cplx(\zeta_S) + c\log^d|S| \sum_{k=0}^n r_k^2|G_k| \\
& = \Cplx(\zeta_S) + c|S|\log^d|S|,
\end{align*}
the last equality arising from (\ref{SizeOfSMaxSubgps}).
\end{proof}

Thus, the problem of constructing a fast Fourier transform for a finite inverse semigroup $S$ may be solved by constructing a fast zeta transform on the poset structure of $S$ and constructing fast Fourier transforms for all of the maximal subgroups of $S$---in particular, an $O(|S|\log^d|S|)$ zeta transform for $S$ together with $O(|G|\log^d|G|)$ FFTs for each of the maximal subgroups $G$ of $S$ combine to form an $O(|S|\log^d|S|)$ FFT for $S$.

\noindent {\bf Remark:} Note that the approach outlined in this section does not make use of the assumption that a complete set of representations of $\C S$ is precomputed and stored in memory, and in fact full precomputation is unnecessary. To use the approach presented here, we only need to precompute a complete set of representations for each of the maximal subgroups $G_k$ of $S$ (the cost of which is handled in the $\Cplx (G_k)$ terms in Theorem \ref{MyBigThm}). Equivalently, once we have chosen one idempotent $e_k$ from each $\D$-class $D_k$ of $S$, we only need to precompute the set of representations $\Y$ used in the proof of Theorem \ref{ThmGroupoidToFourier} on the following subset of the groupoid basis of $\C S$:
\[
\bigcup_{k=0}^n\{\s:s\in G_k\}.
\]
This amounts to precomputing $\Y$ on the full groupoid basis, as the structure of $\rho(\s)$ can then be inferred for any $\rho\in\Y$ and any other groupoid basis element $\s$ as a simple permutation of blocks of one of the matrices already computed.

\noindent {\bf Remark:} The problem of creating a fast zeta transform on the poset structure of a finite inverse semigroup appears difficult to tackle in generality because the poset structure can be about as bad as one wants---every finite meet semilattice is a (commutative and idempotent) finite inverse semigroup under the meet operation, so it is at least possible to encounter any finite meet semilattice as the poset structure of an inverse semigroup. It remains to be seen whether there are any general principles one might employ when creating fast zeta transforms. In the next two sections, we develop specific fast zeta transforms for the poset structures of the rook monoid and its wreath product by arbitrary finite groups and combine them with known results on group FFTs for the symmetric group and its wreath products to obtain $O(|S|\log^d|S|)$ FFTs for these inverse semigroups.


\section{An FFT for the Rook Monoid}
\label{SecRookFFT}

We now use the approach from Section \ref{secGeneralApproach} to create an $O(|R_n| \log^3|R_n|)$-complexity Fourier transform for the rook monoid $R_n$. 
We begin by handling the term in Theorem \ref{MyBigThm} concerning the change of basis from the groupoid basis of $\C R_n$ to a Fourier basis.

\begin{thm}$\Cplx(R_n)\leq \Cplx(\zeta_{R_n}) + \frac{3}{4} n(n-1)|R_n|$.
\label{ThmRookPart1}
\end{thm}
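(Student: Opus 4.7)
The plan is to apply Theorem \ref{MyBigThm} directly and then bound the subgroup FFT contributions using Maslen's bound on $\Cplx(S_n)$. By Corollary \ref{CorRnIsomSks}, the $\D$-classes of $R_n$ are $D_0,\ldots,D_n$ where $D_k$ is the set of rank-$k$ elements; the idempotents in $D_k$ are the partial identities on $k$-subsets of $\{1,\ldots,n\}$, so $r_k=\binom{n}{k}$, and the maximal subgroup $G_k$ at any such idempotent is isomorphic to $S_k$. Thus Theorem \ref{MyBigThm} specializes to
\[
\Cplx(R_n) \leq \Cplx(\zeta_{R_n}) + \sum_{k=0}^n \binom{n}{k}^2 \Cplx(S_k).
\]

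Next I would invoke Maslen's FFT for the symmetric group \cite{Maslen}, which gives the explicit bound $\Cplx(S_k) \leq \tfrac{3}{4}k(k-1)\,k!$ (this is the sharper form underlying the $O(|S_k|\log^2|S_k|)$ statement quoted in the introduction). Substituting yields
\[
\sum_{k=0}^n \binom{n}{k}^2 \Cplx(S_k) \leq \sum_{k=0}^n \binom{n}{k}^2 \tfrac{3}{4}k(k-1)\,k!.
\]
Since $k(k-1)\leq n(n-1)$ for every $0\leq k\leq n$, we may pull this factor outside the sum to obtain
\[
\sum_{k=0}^n \binom{n}{k}^2 \Cplx(S_k) \leq \tfrac{3}{4}n(n-1)\sum_{k=0}^n \binom{n}{k}^2 k! = \tfrac{3}{4}n(n-1)|R_n|,
\]
where the final equality is Theorem \ref{ThmSizeRn}. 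Combining this with the specialization of Theorem \ref{MyBigThm} gives the claimed bound.

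There is essentially no obstacle here beyond quoting Maslen's explicit constant: the whole argument is bookkeeping on top of the general framework and the identification of the maximal subgroups of $R_n$ as symmetric groups. The only subtlety worth flagging is that the bound is stated in a form that hides all logarithmic factors into the arithmetic expression $\tfrac{3}{4}n(n-1)$; once the authors subsequently bound $\Cplx(\zeta_{R_n})$ in the remainder of Section \ref{SecRookFFT}, the two pieces will combine to yield the promised $O(|R_n|\log^3|R_n|)$ complexity in Theorem \ref{ThmCplxSemigroupRn}.
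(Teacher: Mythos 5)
Your proof is correct and matches the paper's argument essentially line for line: specialize Theorem \ref{MyBigThm} using $r_k=\binom{n}{k}$ and $G_k\cong S_k$, invoke Maslen's bound $\Cplx(S_k)\leq\tfrac{3}{4}k(k-1)\,k!$, pull out the factor $k(k-1)\leq n(n-1)$, and sum via Theorem \ref{ThmSizeRn}. No meaningful differences.
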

\begin{proof}Let $D_0,D_1,\ldots,D_n$ be the $\D$-classes of $R_n$. Recall that $D_k$ is the elements of $R_n$ of rank $k$. Let $e_k\in D_k$ be the partial identity on $\{1,2,\ldots,k\}$, so the maximal subgroup $G_k$ of $R_n$ at $e_k$ is isomorphic to $S_k$.

Corollary \ref{CorRnIsomSks} and Theorem \ref{MyBigThm} then imply that
\[
\Cplx(R_n)\leq \Cplx(\zeta_{R_n}) + \sum_{k=0}^n\binom{n}{k}^2\Cplx(S_k).
\]

If we let $\Y_k$ denote a complete set of irreducible representations for $S_k$ given in Young's seminormal or orthogonal form (descriptions of which may be found in \cite{Clausen} or Chapter 3 of \cite{JamesAndKerber}), then we may use Maslen's FFT for the symmetric group \cite{Maslen} to obtain $\Cplx(S_k)\leq T_{\Y_k}(S_k)\leq \frac{3}{4}k(k-1)|S_k|$.

From here, we have
\begin{align*}
\sum_{k=0}^n\binom{n}{k}^2\Cplx(S_k)\leq &\sum_{k=0}^n \binom{n}{k}^2\frac{3}{4}k(k-1)|S_k| \\
&\leq\frac{3}{4} n(n-1) \sum_{k=0}^n \binom{n}{k}^2|S_k|\\
&=\frac{3}{4} n(n-1) \sum_{k=0}^n \binom{n}{k}^2k!\\
&=\frac{3}{4}n(n-1)|R_n|,
\end{align*}
where the last equality follows from Theorem \ref{ThmSizeRn}. The theorem follows.

\end{proof}

We now turn to analyzing $\Cplx(\zeta_{R_n})$. Let $f\in \C R_n$ be an arbitrary element, expressed with respect to the semigroup basis, that is
\[
f=\sum_{s\in R_n}f(s)s.
\]
We would like to express $f$ with respect to the groupoid basis, that is
\[
f=\sum_{s\in R_n}g(s)\s,
\]
where, by (\ref{sissumofbrackett}), the coefficients $g(s)$ are given by
\[
g(s)=\sum_{\substack{t\in R_n: \\ t\geq s}}f(t).
\]
Our goal is to compute the coefficients $g(s)$ in an efficient manner, and we give an algorithm below for doing so. First, however, we present the proof of Theorem \ref{SizeRnRecursiveThm}, as the algorithm we give below is based (at least in part) on the ideas involved in the proof.
\begin{thm}[Theorem \ref{SizeRnRecursiveThm}]
\label{ThmSizeRnRecursiveTwo}
For $n \geq 3$, 
\[
|R_n| = 2n|R_{n-1}| - (n-1)^2|R_{n-2}|.
\]
\end{thm}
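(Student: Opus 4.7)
The plan is to prove the recursion by a combinatorial partition argument, classifying elements $\sigma \in R_n$ according to whether $n$ appears in $\dom(\sigma)$ and/or $\ran(\sigma)$, and then applying inclusion--exclusion. There are four mutually exclusive cases: (i) $n \notin \dom(\sigma) \cup \ran(\sigma)$, (ii) $n \in \dom(\sigma)$ only, (iii) $n \in \ran(\sigma)$ only, and (iv) $n \in \dom(\sigma) \cap \ran(\sigma)$. It will be cleaner to count $A = \#\{\sigma : n \in \dom(\sigma)\}$, $B = \#\{\sigma : n \in \ran(\sigma)\}$, $C = \#\{\sigma : n \in \dom(\sigma) \cap \ran(\sigma)\}$, and $N = \#\{\sigma: n \notin \dom(\sigma)\cup\ran(\sigma)\}$, and to write
\[
|R_n| = N + (A + B - C).
\]

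The easy terms are $N$ and $A$ (and $B$, by transposition). Elements with $n$ in neither the domain nor the range are exactly the partial injections of $\{1,\ldots,n-1\}$ into itself, giving $N = |R_{n-1}|$. For $A$, I would first choose $\sigma(n) \in \{1,\ldots,n\}$ (which is $n$ choices) and then observe that the restriction of $\sigma$ to $\{1,\ldots,n-1\}$ is an arbitrary partial injection from an $(n-1)$-set to the $(n-1)$-set $\{1,\ldots,n\}\setminus\{\sigma(n)\}$; since partial injections between any two $(n-1)$-sets are in natural bijection with $R_{n-1}$ (by relabeling the domain and range in order), this contributes $|R_{n-1}|$ for each choice of $\sigma(n)$, so $A = n\,|R_{n-1}|$, and similarly $B = n\,|R_{n-1}|$.

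The main work is the count $C$. I would split it into the subcase $\sigma(n) = n$, which contributes $|R_{n-1}|$ by the same argument used for $N$ applied to $\sigma|_{\{1,\ldots,n-1\}}$, and the subcase $\sigma(n) = j$ with $j \neq n$ and $\sigma(i) = n$ with $i \neq n$. In the second subcase there are $(n-1)^2$ choices for the ordered pair $(i,j)$, and once $(i,j)$ is fixed the remaining data is a partial injection from $\{1,\ldots,n-1\}\setminus\{i\}$ to $\{1,\ldots,n-1\}\setminus\{j\}$, of which there are $|R_{n-2}|$. Hence $C = |R_{n-1}| + (n-1)^2|R_{n-2}|$.

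Substituting into the inclusion--exclusion expression gives
\[
|R_n| = |R_{n-1}| + 2n|R_{n-1}| - |R_{n-1}| - (n-1)^2|R_{n-2}| = 2n|R_{n-1}| - (n-1)^2|R_{n-2}|,
\]
as claimed. The only potential subtlety is making sure that the bijection between partial injections of two equinumerous sets and the elements of the corresponding $R_m$ is treated carefully, especially in the ``both'' case where the effective domain and range are different $(n-2)$-subsets of $\{1,\ldots,n-1\}$; this is a routine relabeling, and I do not expect it to cause any real difficulty.
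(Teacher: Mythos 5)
Your proof is correct and is essentially the same argument as the paper's: both decompose elements of $R_n$ according to whether a fixed index (you use $n$, the paper uses $1$) appears in the domain and/or range, then apply inclusion--exclusion, with the $(n-1)^2|R_{n-2}|$ correction arising from the off-diagonal overlap. The only cosmetic difference is that the paper excludes the diagonal case from one of the two overlapping sets up front, while you fold it into $C$ and cancel it against $N$ at the end.
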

\begin{proof}
Viewing the elements of $R_n$ as rook matrices, $R_n$ consists of those elements having all 0's in column $1$ and row $1$ (of which there are $|R_{n-1}|$), together with, for each $\alpha \in \{1,\ldots, n\}$, those having a 1 in position $(\alpha, 1)$ (of which there are $n|R_{n-1}|$ total), together with, for each $\alpha \in \{2,\ldots, n\}$, those having a 1 in position $(1,\alpha)$ (of which there are $(n-1)|R_{n-1}|$ total). Counting the number of elements of $R_n$ in this way overcounts. For each pair $\alpha,\beta$ with $2 \leq \alpha , \beta \leq n$, every element with ones in positions $(\alpha,1)$ and $(1,\beta)$ (of which there are $(n-1)^2|R_{n-2}|$ total) gets counted twice.
\end{proof}

We now explain the fast zeta transform, noting that the savings in time afforded by this algorithm come at the expense of a modest additional storage requirement over the naive algorithm---the algorithm presented here requires the storage of up to $O(n^{1/4}|R_n|)$ complex numbers in memory during runtime (see Theorems \ref{ThmZetaMemoryUpper} and \ref{ThmZetaMemoryLower}), as opposed to the naive algorithm, which requires at most $2|R_n|$. 

Let us denote $g(s)=\sum_{t\geq s}f(t)$ by $\zeta_f(s)$. The basic idea is to ``work from the top down.'' Since we are trying to compute $\zeta_f(s)$ for all $s\in R_n$, it makes sense to begin with the elements of rank $n$. If $\rk(s)=n$, then there is no element $t$ such that $t>s$, so $\zeta_f(s)=f(s)$, and this requires no operations. Next, if $\rk(s)=n-1$, then there is only one element $t\in R_n$ such that $t>s$, so
\[
\zeta_f(s) = f(s) + f(t) = f(s) + \zeta_f(t).
\]
Next, if $\rk(s)=n-2$, consider the poset consisting of the elements $t\in R_n$ for which $t\geq s$. This poset is isomorphic to the poset for $R_2$, with $s$ in the place of the $0$ element. We proceed down in rank in this manner, and the aim of this fast zeta transform is, in general, to find a way to re-use the $\zeta_f(t)$ we have already computed in order to efficiently compute $\zeta_f(s)$. In fact, instead of computing just $\zeta_f(s)$ for the elements $s$ of rank $k$, we compute $\zeta_f(s)$ along with $n-k$ other numbers for each element $s$ of rank $k$. These other numbers are needed for the efficient computation of the zeta transform at elements of rank $k-1$ and $k-2$, and can be discarded when they are no longer needed. We introduce some notation.

Let $s\in R_n.$ Then $s$ is a partial permutation of $\{1,2,\ldots,n\}$.
\begin{itemize}
\item Let $d_i(s)$ be the $i^{th}$ element of $\{1,2,\ldots,n\}$ {\em not} in $\dom(s)$.

\item Let $r_i(s)$ be the $i^{th}$ element of $\{1,2,\ldots,n\}$ {\em not} in $\ran(s)$.
\end{itemize}
That is, $d_i(s)$ is simply the $i^{th}$ element of the complement of the domain of $s$ (taken in order), and similarly for $r_i(s)$. Define ``partial'' zeta transforms at $s$ as follows:
\[
\zeta_f(s,\{d_1(s),d_2(s),\ldots,d_{m}(s)\},\{r_1(s),r_2(s),\ldots,r_{m}(s)\}) = \sum_{\substack{t\geq s: \\ d_1(s),\ldots,d_m(s)\notin \dom(t) \\r_1(s),\ldots,r_m(s)\notin \ran(t)}}f(t)
\]
Our zeta transform proceeds as follows, with steps $0, 1, \ldots, n$:
\begin{itemize}
\item Step 0: For all $s\in R_n$ with $\rk(s)=n$, compute all $\zeta_f(s,\{\},\{\})=\zeta_f(s)$  (0 operations).
\item Step 1: For all $s\in R_n$ with $\rk(s)=n-1$, compute $\zeta_f(s,\{\},\{\})=\zeta_f(s)$ and $\zeta_f(s,\{d_1(s),r_1(s)\})$ ($1$ operation for each element $s$).
\[
\vdots
\]
\item Step $n-k$: For all $s\in R_n$ with $\rk(s)=k$, compute all
\begin{align*}
& \zeta_f(s,\{\},\{\}) = \zeta_f(s), \\
& \zeta_f(s,\{d_1(s)\},\{r_1(s)\}), \\
& \zeta_f(s,\{d_1(s),d_2(s)\},\{r_1(s),r_2(s)\}), \\
& \vdots \\
& \zeta_f(s,\{d_1(s),d_2(s),\ldots,d_{n-k}(s)\},\{r_1(s),r_2(s),\ldots,r_{n-k}(s)\}).
\end{align*}
\[
\vdots
\]
\end{itemize}

\begin{thm}Step $n-k$ requires at most
\[
\left((n-k)^2 + \frac{(n-k-1)(n-k)(2n-2k-1)}{6}\right)
\binom{n}{k}^2 k! 
\]
operations in total.
\end{thm}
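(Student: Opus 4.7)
The plan is to fix a rank-$k$ element $s \in R_n$, write $m := n - k$, and bound the cost of computing its $m+1$ partial zeta values
\[
Z_j(s) := \zeta_f\bigl(s, \{d_1(s),\ldots,d_j(s)\}, \{r_1(s),\ldots,r_j(s)\}\bigr), \qquad j = 0, 1, \ldots, m.
\]
By Theorem~\ref{ThmSizeRn} there are $\binom{n}{k}^2 k!$ rank-$k$ elements, so summing the per-element bound over $s$ will yield the claimed total.

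It suffices to show that $Z_j(s)$ can be computed in $(m-j)^2$ operations for each $j \in \{0,1,\ldots,m-1\}$, with $Z_m(s) = f(s)$ requiring $0$; the algebraic identity
\[
\sum_{j=0}^{m-1}(m-j)^2 \;=\; \sum_{i=1}^{m} i^2 \;=\; m^2 + \sum_{i=1}^{m-1}i^2 \;=\; m^2 + \tfrac{(m-1)m(2m-1)}{6}
\]
then packages the per-element bound into the stated form.

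For the per-$Z_j$ bound I would use an inclusion-exclusion recursion directly paralleling the cardinality recurrence of Theorem~\ref{SizeRnRecursiveThm}. Set $(d^*, r^*) := (d_{j+1}(s), r_{j+1}(s))$ and partition the $t$ in the sum defining $Z_j(s)$ by whether $d^* \in \dom(t)$, or $d^* \notin \dom(t)$ and $r^* \in \ran(t)$, or neither (the last case being exactly $Z_{j+1}(s)$). Decomposing the first piece by the value of $t(d^*)$, the second by the preimage of $r^*$, and correcting for the over-count in the second sum coming from $t$ with $d^* \in \dom(t)$, one obtains
\begin{align*}
Z_j(s) &= Z_{j+1}(s) \;+\; \sum_{l=j+1}^{m} \zeta_f\bigl(s \cup \{(d^*, r_l)\}, D_j, R_j\bigr) \\
&\qquad +\; \sum_{i=j+2}^{m} \zeta_f\bigl(s \cup \{(d_i, r^*)\}, D_j, R_j\bigr) \;-\; \sum_{i,l=j+2}^{m} \zeta_f\bigl(s \cup \{(d^*, r_l), (d_i, r^*)\}, D_j, R_j\bigr),
\end{align*}
where $D_j := \{d_1(s),\ldots,d_j(s)\}$ and $R_j := \{r_1(s),\ldots,r_j(s)\}$. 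The key bookkeeping point is that every $s'$ appearing on the right extends $s$ only by pairs whose indices exceed $j$, so $d_i(s') = d_i(s)$ and $r_i(s') = r_i(s)$ for $i \leq j$; hence each $\zeta_f(s', D_j, R_j)$ is one of the matched partial zetas already computed in the previous step (at rank $k+1$ or $k+2$), and $Z_{j+1}(s)$ was computed earlier within step $n-k$ (since one processes $j$ in the order $m, m-1, \ldots, 0$).

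The operation count then falls out: initialize an accumulator with the stored $Z_{j+1}(s)$ (a free lookup), add the $m-j$ summands of the first sum, add the $m-j-1$ summands of the second sum, and subtract the $(m-j-1)^2$ summands of the double sum, for a total of
\[
(m-j) + (m-j-1) + (m-j-1)^2 \;=\; (m-j)^2
\]
operations per $Z_j(s)$. The main obstacle is verifying the displayed inclusion-exclusion identity---that each $t \geq s$ satisfying the $j$-exclusions contributes net coefficient $1$ on the right---which reduces to a direct transcription of the counting argument behind $|R_n| = 2n|R_{n-1}| - (n-1)^2|R_{n-2}|$, now applied to the free rook sub-structure on the $m-j$ domain and range slots above threshold $j$.
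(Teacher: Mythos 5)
Your proof is correct and takes essentially the same approach as the paper: the same inclusion--exclusion recursion modeled on Theorem \ref{SizeRnRecursiveThm}, the same observation that the needed partial zeta transforms at ranks $k+1$ and $k+2$ agree with ones computed in earlier steps, and the same total count. The only cosmetic differences are that you fix the new domain slot $d^{*}$ and enumerate its images (the paper fixes the range slot and enumerates preimages), and you simplify the per-$Z_j$ cost to $(m-j)^2$ before summing rather than summing the three raw term counts directly.
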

\begin{proof} We will show that, for an element $s\in R_n$ with $\rk(s)=k$, computing all 
\begin{align*}
& \zeta_f(s,\{\},\{\}) = \zeta_f(s), \\
& \zeta_f(s,\{d_1(s)\},\{r_1(s)\}), \\
& \zeta_f(s,\{d_1(s),d_2(s)\},\{r_1(s),r_2(s)\}), \\
& \vdots \\
& \zeta_f(s,\{d_1(s),d_2(s),\ldots,d_{n-k}(s)\},\{r_1(s),r_2(s),\ldots,r_{n-k}(s)\}) 
\end{align*}
requires at most
\[
(n-k)^2 + \frac{(n-k-1)(n-k)(2n-2k-1)}{6}
\]
additions, assuming that steps $0,1,\ldots,n-k-1$ have already been completed. 

Let $s*(d_i(s)\rightarrow r_j(s))$ denote the element of $R_n$ that is obtained by adding $d_i(s)$ to the domain of $s$ and sending it to $r_j(s)$. For example, if 
\[
s = \left(\begin{array}{ccccccc}
1 &2 &3 &4 &5 &6 &7 \\
2 &- &5 &- &- &- &3 \end{array} 
\right),
\]
then
\[
s*(d_2(s)\rightarrow r_3(s))=
\left(\begin{array}{ccccccc}
1 &2 &3 &4 &5 &6 &7 \\
2 &- &5 &6 &- &- &3 \end{array} 
\right).
\]

Now, consider the poset of elements $t\in R_n$ with $t\geq s$. This poset is isomorphic to the poset for $R_{n-k}$, with $s$ in the place of the $0$ element. Following the idea in the proof of Theorem \ref{ThmSizeRnRecursiveTwo}, we have:
\begin{align*}
\zeta_f(s,\{\},\{\}) &= \zeta_f(s*(d_1(s)\rightarrow r_1(s)),\{\},\{\}) \\
 &+ \zeta_f(s*(d_2(s)\rightarrow r_1(s)),\{\},\{\}) + \cdots \\
 &+ \zeta_f(s*(d_{n-k}(s)\rightarrow r_1(s)),\{\},\{\}) \\
 &+ \zeta_f(s*(d_1(s)\rightarrow r_2(s)),\{\},\{\}) + \cdots \\
 &+ \zeta_f(s*(d_1(s)\rightarrow r_{n-k}(s)),\{\},\{\}) \\
 &- \sum_{i,j\in \{2,\ldots,n-k\}} \zeta_f(s*(d_i(s)\rightarrow r_1(s))*(d_1(s)\rightarrow r_j(s)),\{\},\{\}) \\
 &+ \zeta_f(s,\{d_1(s)\},\{r_1(s)\}).
\end{align*}
Notice that every term in this sum, with the exception of $\zeta_f(s,\{d_1(s)\},\{r_1(s)\})$, was computed in an earlier step. After all, $\rk(s*(d_i(s)\rightarrow r_j(s))) > \rk(s)$ for all $i,j$. Thus, once we have $\zeta_f(s,\{d_1(s)\},\{r_1(s)\})$, all we have to do to compute $\zeta_f(s)$ is add these terms up. To compute $\zeta_f(s,\{d_1(s)\},\{r_1(s)\})$, we again follow the idea in the proof of Theorem \ref{ThmSizeRnRecursiveTwo}, and we write:
\begin{align*}
\zeta_f(s,\{d_1(s)\},\{r_1(s)\}) &= \zeta_f(s*(d_2(s)\rightarrow r_2(s)),\{d_1(s)\},\{r_1(s)\}) \\
 &+ \zeta_f(s*(d_3(s)\rightarrow r_2(s)),\{d_1(s)\},\{r_1(s)\}) + \cdots \\
 &+ \zeta_f(s*(d_{n-k}(s)\rightarrow r_2(s)),\{d_1(s)\},\{r_1(s)\}) \\
 &+ \zeta_f(s*(d_2(s)\rightarrow r_3(s)),\{d_1(s)\},\{r_1(s)\}) + \cdots \\
 &+ \zeta_f(s*(d_2(s)\rightarrow r_{n-k}(s)),\{d_1(s)\},\{r_1(s)\}) \\
 &- \sum_{i,j\in \{3,\ldots,n-k\}} \zeta_f(s*(d_i(s)\rightarrow r_2(s))*\\
 &\quad \quad\quad (d_2(s)\rightarrow r_j(s)),\{d_1(s)\},\{r_1(s)\}) \\
 &+ \zeta_f(s,\{d_1(s),d_2(s)\},\{r_1(s),r_2(s)\}).
\end{align*}
Notice that $i,j \geq 2$ implies
\begin{align*}
& \zeta_f(s*(d_i(s)\rightarrow r_j(s)),\{d_1(s)\},\{r_1(s)\}) \\ &= \zeta_f(s*(d_i(s)\rightarrow r_j(s)),\{d_1(s*(d_i(s)\rightarrow r_j(s)))\},\{r_1(s*(d_i(s)\rightarrow r_j(s)))\}),
\end{align*}
so every term in this sum, with the exception of $\zeta_f(s,\{d_1(s),d_2(s)\},\{r_1(s),r_2(s)\})$, was computed in an earlier step.

In general, suppose $D= \{d_1(s),d_2(s),\ldots,d_m(s)\}$, $R=\{r_1(s),r_2(s),\ldots,r_m(s)\}$. If $m=n-k$, then we have
\[
\zeta_f(s,D,R)=f(s).
\]
If $m=n-k-1$, then we have
\begin{align*}
\zeta_f(s,D,R)&=\zeta_f(s*(d_{n-k}(s)\rightarrow r_{n-k}(s)),D,R) \\&+ \zeta_f(s,D \cup \{d_{n-k}(s)\},R \cup \{r_{n-k}(s)\}).
\end{align*}
Otherwise, $m<n-k-1$, and we have
\begin{align}
\zeta_f(s,D,R) &= \zeta_f(s*(d_{m+1}(s)\rightarrow r_{m+1}(s)),D,R) \notag\\
 &+ \zeta_f(s*(d_{m+2}(s)\rightarrow r_{m+1}(s)),D,R) + \cdots \notag\\
 &+ \zeta_f(s*(d_{n-k}(s)\rightarrow r_{m+1}(s)),D,R) \notag\\
 &+ \zeta_f(s*(d_{m+1}(s)\rightarrow r_{m+2}(s)),D,R) + \cdots \label{TwoLevels}\\
 &+ \zeta_f(s*(d_{m+1}(s)\rightarrow r_{n-k}(s)),D,R) \notag\\
 &- \sum_{i,j\in \{{m+2},\ldots,n-k\}} \zeta_f(s*(d_i(s)\rightarrow r_{m+1}(s))*\notag\\
 & \quad \quad\quad (d_{m+1}(s)\rightarrow r_j(s)),D,R) \notag\\
 &+ \zeta_f(s,D\cup \{d_{m+1}(s)\},R\cup\{r_{m+1}(s)\}), \notag
\end{align}
where every term in the sum, with the exception of 
\[
\zeta_f(s,D\cup \{d_{m+1}(s)\},R\cup\{r_{m+1}(s)\}),
\]
was computed in an earlier step of the algorithm. Once we have computed $\zeta_f(s,D\cup \{d_{m+1}(s)\},R\cup\{r_{m+1}(s)\})$, the number of operations required to compute $\zeta_f(s,D,R)$ is thus no more than
\[
(n-k-m)+(n-k-m-1)+(n-k-m-1)^2.
\]
We do this for $m$ from $n-k$ to $0$ to compute, in order:
\begin{align*}
& \zeta_f(s,\{d_1(s),d_2(s),\ldots,d_{n-k}(s)\},\{r_1(s),r_2(s),\ldots,r_{n-k}(s)\}), \\
& \vdots \\
& \zeta_f(s,\{d_1(s),d_2(s)\},\{r_1(s),r_2(s)\}), \\
& \zeta_f(s,\{d_1(s)\},\{r_1(s)\}), \\
& \zeta_f(s,\{\},\{\}) = \zeta_f(s),
\end{align*}
which is what we want. The total number of operations required is thus no more than
\begin{align*}
&\sum_{m=0}^{n-k} (n-k-m)+(n-k-m-1)+(n-k-m-1)^2 \\&= (n-k)^2 + \frac{(n-k-1)(n-k)(2n-2k-1)}{6}.
\end{align*}
\end{proof}

This algorithm yields the bound
\begin{thm}For $n\geq 3$,
$
\Cplx (\zeta_{R_n}) \leq \frac{2}{3}n^3|R_n|.
$
\end{thm}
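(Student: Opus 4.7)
The plan is to simply sum the per-step bound from the previous theorem over all $n+1$ steps of the zeta transform algorithm. Since step $n-k$ was shown to require at most
\[
\left((n-k)^2 + \frac{(n-k-1)(n-k)(2n-2k-1)}{6}\right)\binom{n}{k}^2 k!
\]
operations, and the total zeta transform is precisely the concatenation of steps $0,1,\ldots,n$, we have
\[
\Cplx(\zeta_{R_n}) \leq \sum_{k=0}^n \left((n-k)^2 + \frac{(n-k-1)(n-k)(2n-2k-1)}{6}\right)\binom{n}{k}^2 k!.
\]
The plan from here is to pull the $k$-dependent coefficient out by bounding it uniformly, then recognize the remaining sum as $|R_n|$ via Theorem \ref{ThmSizeRn}.

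The key estimate I would establish is that, for every $k$ with $0\leq k\leq n$ (and $n\geq 3$), the bracketed coefficient is at most $\tfrac{2}{3}n^3$. Setting $j=n-k$, the expression becomes $j^2 + \tfrac{(j-1)j(2j-1)}{6}$, which is an increasing function of $j$ on $[0,n]$ and so is maximized at $j=n$. Expanding,
\[
n^2 + \frac{(n-1)n(2n-1)}{6} = \frac{n^3}{3} + \frac{n^2}{2} + \frac{n}{6},
\]
and I would verify by a short calculation that this is at most $\tfrac{2}{3}n^3$ for $n\geq 3$; equivalently, $n^2/2 + n/6 \leq n^3/3$, which rearranges to $3n^2 + n \leq 2n^3$ and is elementary to check for $n\geq 3$ (it already holds at $n=2$, but $n\geq 3$ is what the theorem assumes).

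With that uniform bound in hand, the conclusion is immediate:
\[
\Cplx(\zeta_{R_n}) \leq \frac{2}{3}n^3\sum_{k=0}^n \binom{n}{k}^2 k! = \frac{2}{3}n^3 |R_n|,
\]
using Theorem \ref{ThmSizeRn} in the last equality. The only mildly nontrivial step is the uniform estimate on the bracketed coefficient, and even that is a routine polynomial inequality rather than a genuine obstacle; the real content of the argument was already carried out in the preceding theorem that counted the per-step cost.
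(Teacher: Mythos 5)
Your proposal is correct and follows essentially the same route as the paper's proof: sum the per-step operation count, bound the $k$-dependent coefficient uniformly by $\tfrac{2}{3}n^3$ for $n\geq 3$, and identify $\sum_k \binom{n}{k}^2 k!$ with $|R_n|$ via Theorem \ref{ThmSizeRn}. The only cosmetic difference is that you sharpen the intermediate estimate by maximizing the coefficient exactly at $j=n-k=n$ before comparing with $\tfrac{2}{3}n^3$, whereas the paper bounds $(n-k)^2\leq n^2$ and the cubic factor by $\tfrac{2n^3}{6}$ separately and then combines them; both reductions lead to the same elementary inequality and conclusion.
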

\begin{proof}
Using the algorithm given above, we have
\[
\Cplx(\zeta_{R_n}) \leq \sum_{k=0}^n \left((n-k)^2 + \frac{(n-k-1)(n-k)(2n-2k-1)}{6} \right) \binom{n}{k}^2 k!
\]
When $n\geq 3$,
\begin{align*}
(n-k)^2 + \frac{(n-k-1)(n-k)(2n-2k-1)}{6} &\leq n^2 + \frac{2n^3}{6}\\
& \leq \frac{2}{3}n^3,
\end{align*}
in which case we have
\[
\Cplx (\zeta_{R_n}) \leq \frac{2}{3}n^3 \sum_{k=0}^n \binom{n}{k}^2 k! = \frac{2}{3}n^3|R_n|.
\]
\end{proof}

Combining this fast zeta transform with Theorem \ref{ThmRookPart1}, we obtain 
\begin{thm} $\Cplx(R_n) = O(|R_n| \log^3 |R_n|)$.
\label{ThmCplxSemigroupRn}
\end{thm}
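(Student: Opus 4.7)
The plan is to combine the two complexity bounds we have just established and then compare the resulting bound, which is polynomial in $n$, against $\log |R_n|$.

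First, by Theorem \ref{ThmRookPart1} and the zeta transform bound $\Cplx(\zeta_{R_n}) \leq \frac{2}{3}n^3 |R_n|$ (valid for $n\geq 3$), I immediately obtain
\[
\Cplx(R_n) \;\leq\; \frac{2}{3}n^3 |R_n| + \frac{3}{4}n(n-1)|R_n| \;=\; O(n^3\, |R_n|).
\]
So to conclude the theorem, it suffices to verify that $n^3 = O(\log^3 |R_n|)$, or equivalently that $n = O(\log |R_n|)$.

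This last step is straightforward. Theorem \ref{ThmSizeRn} gives
\[
|R_n| \;=\; \sum_{k=0}^n \binom{n}{k}^2 k! \;\geq\; n!,
\]
(taking the $k=n$ term), and Stirling's formula yields $\log(n!) = \Theta(n \log n) \geq \Omega(n)$. Hence $n = O(\log |R_n|)$, so $n^3 = O(\log^3 |R_n|)$, and combining with the previous display produces
\[
\Cplx(R_n) \;=\; O(|R_n| \log^3 |R_n|).
\]

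There is no real obstacle here, since both ingredients are already in hand; the only thing worth being careful about is that the bound $\Cplx(\zeta_{R_n}) \leq \frac{2}{3}n^3|R_n|$ was established only for $n\geq 3$, but the small cases $n=0,1,2$ contribute only a constant overhead and can be absorbed into the big-$O$ constant.
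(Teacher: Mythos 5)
Your proposal is correct and follows exactly the same route as the paper: combine Theorem~\ref{ThmRookPart1} with the $\frac{2}{3}n^3|R_n|$ zeta-transform bound, then use $|R_n|\geq n!$ and Stirling to conclude $n=O(\log|R_n|)$. Your added remark about absorbing the $n\leq 2$ cases is a minor but welcome bit of extra care.
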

\begin{proof}For $n\geq 3$, we have
\[
\Cplx(R_n) \leq \Cplx(\zeta_{R_n}) + \frac{3}{4}n(n-1)|R_n|
\leq \frac{2}{3}n^3|R_n| + \frac{3}{4}n(n-1)|R_n|.
\]
Since $|R_n|\geq n!$ and $n = O(\log (n!))$, we are done.
\end{proof}

\noindent{\bf Remark:} This result considerably improves the bound on $\Cplx(R_n)$ we obtained in \cite[Theorem 8.2]{RookFFT}, where we used a naive implementation of the zeta transform on $R_n$ to prove  
\[
\Cplx(R_n) \leq 2^n|R_n| + \frac{3}{4}n(n-1)|R_n|.
\]

%


We now turn to an analysis of the memory required to use the fast zeta transform. Since we compute $n-k+1$ complex numbers for each element of rank $k$, it is immediate that we need to store no more than $(n+1)|R_n|$ complex numbers during runtime. However, by (\ref{TwoLevels}), we only need partial zeta transforms of elements of rank $k+1$ and $k+2$ to compute the partial zeta transforms for an element of rank $k$, so when we begin step $n-k$ of our algorithm (compute all partial zeta transforms for the elements of rank $k$) we may discard the partial zeta transforms $\zeta_f(s,A,B)$ for $A,B\neq \{\}$ and $\rk(s)>k+2$.

Storing the inputs (the $f(s)$) and the outputs (the $\zeta_f(s)$) of the zeta transform requires the storage of $2|R_n|$ complex numbers. At the beginning of step $n-k$, let us allocate memory for all of the partial zeta transforms of the elements of rank $k$ and discard the partial zeta transforms of rank $k+3$ and higher that we no longer need. Note that for any $s\in R_n$, one of the $\zeta_f(s,A,B)$ that we compute is just $f(s)$ and another is $\zeta_f(s)$, so at the beginning of step $n-k$ we allocate memory for $(n-k-1)\binom{n}{k}^2k!$ complex numbers that will be discarded later (when $k=n$ we interpret this to be 0).

\begin{thm}This fast zeta transform requires the storage of no more than $O(n^{1/4}|R_n|)$ complex numbers in memory at any point during its execution.
\label{ThmZetaMemoryUpper}
\end{thm}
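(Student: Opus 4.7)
The plan is to bound the memory at each step $n-k$ by $2|R_n|$ plus a constant multiple of $\max_{j}(n-j)T_j$, where $T_j := \binom{n}{j}^2 j!$, and then to show that $\max_j (n-j)T_j = O(n^{1/4}|R_n|)$. At the start of step $n-k$, the algorithm holds the $2|R_n|$ input/output values plus the discardable partial zeta transforms of ranks $k$, $k+1$, $k+2$, numbering
\[
(n-k-1)T_k + (n-k-2)T_{k+1} + (n-k-3)T_{k+2} \;\leq\; 3\max_j (n-j)T_j.
\]
(For edge-case ranks near $0$ or $n$ the bound is only easier, since fewer levels are stored.) Since $|R_n| = \sum_j T_j$ by Theorem \ref{ThmSizeRn}, it then suffices to prove $\max_j (n-j)T_j = O(n^{1/4}|R_n|)$.

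To prove this bound, I would first locate the peak of $T_j$ via the elementary recurrence $T_{j+1}/T_j = (n-j)^2/(j+1)$, which equals $1$ when $(n-j)^2 = j+1$. This gives a unique peak at $j^* = n - \sqrt{n} + O(1)$, at which $(n-j^*) = \Theta(\sqrt{n})$. Next, I would show that $T_j$ is concentrated around $j^*$ on the scale $n^{1/4}$. Writing $j = j^* + x$ and telescoping,
\[
\frac{T_{j^*+x}}{T_{j^*}} = \prod_{i=0}^{x-1}\frac{(n-j^*-i)^2}{j^*+i+1}
\]
(and analogously for $x < 0$, with reciprocals); each factor is of the form $1 - \Theta(i/\sqrt{n})$, so the log of the product is $\Theta(-x^2/\sqrt{n})$. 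Hence for some absolute constant $c > 0$ and all $|x| \leq c n^{1/4}$, we have $T_{j^*+x}/T_{j^*} \geq 1/e$. Summing over this range gives $|R_n| \geq \Omega(n^{1/4})\,T_{j^*}$, so $T_{j^*} = O(|R_n|/n^{1/4})$. Because $(n-j)$ is a slowly varying factor that shifts the maximizer of $(n-j)T_j$ by only $O(1)$ away from $j^*$, we conclude $\max_j (n-j)T_j = O(\sqrt{n}\cdot T_{j^*}) = O(n^{1/4}|R_n|)$.

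The main obstacle will be making the concentration estimate rigorous---specifically, verifying that the telescoping product stays bounded below by a positive constant on an interval of width $\Omega(n^{1/4})$ around $j^*$, handling both signs of $x$ and the boundary cases near $j = 0$ or $j = n$. Once that is in place, everything else is a direct comparison with the sum $|R_n| = \sum_j T_j$ from Theorem \ref{ThmSizeRn}.
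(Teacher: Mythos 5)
Your memory accounting at the start of step $n-k$ (the $2|R_n|$ for inputs/outputs plus the discardable partial zeta transforms of ranks $k,k+1,k+2$, bounded by $3\max_j(n-j)T_j$ where $T_j=\binom{n}{j}^2 j!$) matches the paper's accounting exactly (the paper uses $(n-j-1)T_j$, an inconsequential difference). Where you diverge is in the key estimate $\max_j(n-j)T_j=O(n^{1/4}|R_n|)$. The paper parametrizes $k=n-x\sqrt n$, applies Stirling's approximation to $(x\sqrt n)!$, and crucially imports the Janson--Mazorchuk asymptotic $|R_n|\sim n!\,e^{2\sqrt n}/\bigl(2n^{1/4}\sqrt{\pi e}\bigr)$ from \cite{Asymptotics}, then shows three residual factors are each $\leq 1$, yielding an explicit bound $(n-k-1)T_k\leq n^{1/4}|R_n|$ for large $n$. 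You instead propose a self-contained ratio/concentration argument: locate the peak $j^*\approx n-\sqrt n$ via $T_{j+1}/T_j=(n-j)^2/(j+1)$, show by telescoping that $T_{j^*+x}/T_{j^*}\geq e^{-1}$ for $|x|\lesssim n^{1/4}$, deduce $T_{j^*}=O(n^{-1/4}|R_n|)$ directly from $|R_n|=\sum_j T_j$, and finish with $n-j^*=\Theta(\sqrt n)$ and the observation that the maximizer of $(n-j)T_j$ sits $O(1)$ from $j^*$. Your route avoids both Stirling and the external asymptotic, which is a genuine simplification of the dependencies; what it gives up is the paper's explicit constant ($2|R_n|+3n^{1/4}|R_n|$ eventually). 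Your concentration step is essentially a local-central-limit estimate and is routine, but if you write it out you should be explicit that the factor at $i$ in the telescoping product is $1-(2i-1)/\sqrt n+O(i^2/n)$, so the cumulative correction $\sum_i O(i^2/n)=O(n^{-1/4})$ over the window $|x|\leq n^{1/4}$ is negligible; the boundary cases $j$ near $0$ or $n$ are vacuous for large $n$ since $j^*\pm n^{1/4}$ lies well inside $(0,n)$.
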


\begin{proof} At the beginning of step $n-k$ we allocate memory for $(n-k-1)\binom{n}{k}^2k!$ complex numbers that will be discarded later. We continue to store all of the partial zeta transforms of the elements only of rank $k+1$ and $k+2$ as well, so at no point will we ever need to store more than
\[
2|R_n| +  3\left(\max_{k\in\{0,1,\ldots,n-1\}} (n-k-1)\binom{n}{k}^2k!\right)
\]
complex numbers in memory during the execution of the fast zeta transform. We claim
\[
\max_{k\in\{0,1,\ldots,n-1\}} (n-k-1)\binom{n}{k}^2k! = O(n^{1/4}|R_n|).
\]
To see this, write $k=n-x\sqrt{n}$ for some $0< x \leq \sqrt{n}$. Then using Stirling's approximation we have
\begin{align}
(n-k-1)\binom{n}{k}^2k! &\leq \frac{(n-k)n!n!}{k!(n-k)!(n-k)!} \notag\\ 
&=\frac{x\sqrt{n}n!n!}{(n-x\sqrt{n})!(x\sqrt{n})!^2} \notag\\
&\leq \frac{n!n!}{(n-x\sqrt n)!2\pi ({x\sqrt n}/{e})^{2x\sqrt n}e^{2/(12x\sqrt{n}+1)}} \notag\\ 
&=\frac{n^{1/4}\sqrt{e}}{\sqrt{\pi}}\cdot 
\frac{n!e^{2\sqrt{n}}}{2n^{1/4}\sqrt{\pi e}} \cdot  
\frac{n!}{(n-x\sqrt{n})!\sqrt{n}^{2x\sqrt{n}}}\cdot 
\frac{e^{2x\sqrt{n}}}{x^{2x\sqrt{n}}e^{2\sqrt{n}}}\cdot 
\frac{1}{e^{2/(12x\sqrt{n}+1)}}.\label{ugh1}
\end{align}

By \cite[Theorem 1]{Asymptotics}, we have that $|R_n|$ is asymptotically $n!e^{2\sqrt{n}}/(2n^{1/4}\sqrt{\pi e})$, so for $\epsilon>0$ let $n$ be large enough so that $n!e^{2\sqrt{n}}/(2n^{1/4}\sqrt{\pi e})\leq (1+\epsilon)|R_n|$. All of the terms in the product (\ref{ugh1}) are nonnegative, and we show below that the final three terms are each bounded above by 1. From this it follows that for $n$ large enough, for all $k$ we have
\[
(n-k-1)\binom{n}{k}^2k! \leq n^{1/4} \frac{\sqrt{e}}{\sqrt{\pi}} (1+\epsilon)|R_n|,
\]
so for $n$ large enough, for all $k$ we have
\[
(n-k-1)\binom{n}{k}^2k! \leq n^{1/4}|R_n|,
\]
and thus for $n$ large enough the number of complex numbers we need to store in memory during the execution of the fast zeta transform is bounded by  $2|R_n|+3n^{1/4}|R_n|$. 

To finish the proof, then, we show that the final three terms of the product (\ref{ugh1}) are bounded above by 1. First,
\begin{align*}
\log\left(\frac{n!}{(n-x\sqrt{n})!\sqrt{n}^{2x\sqrt{n}}}\right) &= \log(n)+\log(n-1)+\cdots+\log(n-x\sqrt{n}+1) - x\sqrt{n}\log(n)\\
&\leq x\sqrt{n}\log(n) - x\sqrt{n}\log(n) = 0,
\end{align*}
so
\[
\frac{n!}{(n-x\sqrt{n})!\sqrt{n}^{2x\sqrt{n}}} \leq 1.
\]
Next, for fixed $n$ and $x\in(0,\sqrt{n}]$, elementary calculus shows that
$$
f(x)=\frac{e^{2x\sqrt{n}}}{x^{2x\sqrt{n}}}
$$
is increasing for $x\in(0,1]$ and decreasing for $x\in[1,\sqrt{n}]$. Hence $f(x)$ is maximized at $x=1$, where it has value $e^{2\sqrt{n}}$. Thus
\[
\frac{e^{2x\sqrt{n}}}{x^{2x\sqrt{n}}e^{2\sqrt{n}}} \leq 1.
\]
Finally, it is immediate that
\[
\frac{1}{e^{2/(12x\sqrt{n}+1)}}\leq 1,
\]
which completes the proof.
\end{proof}

Next, we show that the bound given in Theorem \ref{ThmZetaMemoryUpper} is, up to $O$, the best possible.
\begin{thm}There exist infinitely many $n$ for which this fast zeta transform requires the storage of at least $\frac{1}{3}n^{1/4}|R_n|$ complex numbers in memory at some point during its execution.\label{ThmZetaMemoryLower}
\end{thm}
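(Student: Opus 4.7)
The plan is to reverse-engineer the analysis of Theorem \ref{ThmZetaMemoryUpper}. That proof observed that at the beginning of step $n-k$ the algorithm allocates memory for $(n-k-1)\binom{n}{k}^2 k!$ complex numbers (for the partial zeta transforms of the rank-$k$ elements, less the two slots per element that are already reserved among the $2|R_n|$ input/output numbers), and then maximized this quantity over $k$. To establish a matching lower bound, it suffices to exhibit, for infinitely many $n$, a single choice of $k$ for which this allocation alone already exceeds $\tfrac{1}{3}n^{1/4}|R_n|$.

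The choice of $k$ is dictated by the upper bound proof: in the parametrization $k = n - x\sqrt n$, the dominant factor $e^{2x\sqrt n}/x^{2x\sqrt n}$ attains its maximum on $(0,\sqrt n]$ at $x = 1$, corresponding to $k = n - \sqrt n$. I will therefore restrict attention to perfect squares $n$ (an infinite subset of $\N$) and set $k = n - \sqrt n$, reducing the theorem to the assertion
\[
(\sqrt n - 1)\,\frac{(n!)^2}{(n - \sqrt n)! \,((\sqrt n)!)^2} \;\geq\; \tfrac{1}{3}\,n^{1/4}\,|R_n|
\]
for all sufficiently large perfect squares $n$.

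The verification is a Stirling-type calculation mirroring the one in Theorem \ref{ThmZetaMemoryUpper}, but tracking the multiplicative constants in the opposite direction. Stirling's formula gives $((\sqrt n)!)^2 \sim 2\pi\sqrt n \cdot n^{\sqrt n}/e^{2\sqrt n}$, and the falling factorial $n!/(n-\sqrt n)! = n^{\sqrt n}\prod_{i=0}^{\sqrt n - 1}(1 - i/n)$ is asymptotic to $n^{\sqrt n}/\sqrt e$ (taking logs, the product becomes $\sum_{i=0}^{\sqrt n - 1}\log(1-i/n) \sim -\tfrac{1}{n}\sum_{i=0}^{\sqrt n - 1}i \to -\tfrac{1}{2}$). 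Combining these with the asymptotic $|R_n| \sim n!\,e^{2\sqrt n}/(2 n^{1/4}\sqrt{\pi e})$ from \cite{Asymptotics} collapses the ratio to
\[
\frac{(\sqrt n - 1)(n!)^2}{(n - \sqrt n)!((\sqrt n)!)^2 \cdot |R_n|} \;\sim\; \frac{n^{1/4}}{\sqrt \pi}.
\]

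The main technical point is quantifying the $\sim$ relations above. Each Stirling estimate and the $|R_n|$ asymptotic must be supplied with an explicit error factor so that the product of the resulting multiplicative errors still leaves the ratio above $\tfrac{1}{3} n^{1/4}$ rather than merely $\tfrac{1}{\sqrt\pi} n^{1/4}$. Since $1/\sqrt\pi \approx 0.564 > 1/3$ there is a comfortable slack of roughly $0.23$, so it suffices to take $n$ large enough that the cumulative relative error stays below, say, $0.4$; this is routine by pushing the $O(1/\sqrt n)$ correction in each individual Stirling term (and the second-order term in the $\log(1 - i/n)$ expansion, which contributes only $O(1/\sqrt n)$) to be small for large $n$.
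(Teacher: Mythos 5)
Your proposal is correct and follows essentially the same route as the paper's proof: both restrict to perfect squares $n$, examine the memory allocated for the partial zeta transforms of the rank $n-\sqrt n$ elements, namely $(\sqrt n-1)\binom{n}{n-\sqrt n}^2(n-\sqrt n)!$, and conclude via Stirling's formula together with the Janson--Mazorchuk asymptotic for $|R_n|$. The only minor difference is that you extract the tight asymptotic constant $1/\sqrt\pi \approx 0.564$ by estimating $n!/((n-\sqrt n)!\,n^{\sqrt n})$ to second order, whereas the paper uses the cruder bound $n!/((n-\sqrt n)!\,n^{\sqrt n}) \geq (1-1/\sqrt n)^{\sqrt n}$ to land at $1/\sqrt{\pi e}\approx 0.342$, which still clears $1/3$ but with far less slack.
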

\begin{proof}Let $n$ be a square. Proceeding rank by rank, at some point we will have all of the partial zeta transforms of all of the elements of rank $n-\sqrt{n}$ in memory. At this point, we will be storing
\[
(\sqrt{n}-1)\binom{n}{n-\sqrt{n}}^2(n-\sqrt{n})!
\]
complex numbers which we intend to discard later.
Now, using Stirling's approximation we have
\begin{align}
(\sqrt{n}-1)\binom{n}{n-\sqrt{n}}^2(n-\sqrt{n})! &= \frac{(\sqrt{n}-1)n!n!}{(n-\sqrt{n})!(\sqrt{n}!)^2} \notag\\
&\geq \frac{(\sqrt{n}-1)n!n!}{(n-\sqrt{n})! 2\pi \sqrt{n} (\sqrt{n}/e)^{2\sqrt{n}}e^{1/(6\sqrt{n})}} \notag\\
&= \frac{n^{1/4}\sqrt{e}}{\sqrt{\pi}}\cdot 
\frac{n!e^{2\sqrt{n}}}{2n^{1/4}\sqrt{\pi e}} \cdot 
\frac{n!}{(n-\sqrt{n})!n^{\sqrt{n}}} \cdot
\frac{\sqrt{n}-1}{\sqrt{n}}\cdot
\frac{1}{e^{1/(6\sqrt{n})}}. \label{ugh2}
\end{align}

By \cite[Theorem 1]{Asymptotics}, we have that $|R_n|$ is asymptotically $n!e^{2\sqrt{n}}/(2n^{1/4}\sqrt{\pi e})$, so for $\epsilon>0$ let $n$ be large enough so that: 
\begin{itemize}
	\item $n!e^{2\sqrt{n}}/(2n^{1/4}\sqrt{\pi e})\geq (1-\epsilon)|R_n|$,
	\item $(1-1/\sqrt{n})^{\sqrt{n}}\geq (1-\epsilon)\cdot1/e$,
	\item $(\sqrt{n}-1)/{\sqrt{n}} \geq 1-\epsilon$, and
	\item ${1}/(e^{1/(6\sqrt{n})}) \geq 1-\epsilon$.
\end{itemize}
Note that
\begin{align*}
\log\left(\frac{n!}{(n-\sqrt{n})!n^{\sqrt{n}}}\right) &= \log(n)+\log(n-1)+\cdots+\log(n-\sqrt{n}+1)-\sqrt{n}\log(n) \\
&\geq \sqrt{n}\log(n-\sqrt{n})-\sqrt{n}\log(n) = \log\left(\left(1-\frac{1}{\sqrt{n}}\right)^{\sqrt{n}}\right),
\end{align*}
so
\[
\frac{n!}{(n-\sqrt{n})!n^{\sqrt{n}}} \geq \left(1-\frac{1}{\sqrt{n}}\right)^{\sqrt{n}}.
\]
Combining this with the fact that all of the terms in the product (\ref{ugh2}) are nonnegative, we obtain
\begin{align*}
(\sqrt{n}-1)\binom{n}{n-\sqrt{n}}^2(n-\sqrt{n})! &\geq 
\frac{n^{1/4}}{\sqrt{\pi e}}\cdot 
(1-\epsilon)^4|R_n|,
\end{align*}
so for all squares $n$ large enough we have
\[
(\sqrt{n}-1)\binom{n}{n-\sqrt{n}}^2(n-\sqrt{n})! \geq \frac{1}{3}n^{1/4}|R_n|.
\]
\end{proof}

\section{FFTs for Rook Wreath Products}
\label{SecWreath}

Let $G$ be a finite group. The {\em rook wreath product} $\Gwr{R_n}$ is the semigroup of all $n\times n$ matrices with entries in $\{0\} \cup G$ having at most one non-zero entry per row and column under the operation of matrix multiplication. Let us write $1$ for the identity of $G$. Clearly, then, we recover the rook monoid $R_n$ as $\Z_1 \wr R_n$. It is easy to see that the idempotents of $\Gwr R_n$ are precisely the idempotents of $R_n$, and that $\Gwr R_n$ is an inverse semigroup.

In this section we again use the approach detailed in Section \ref{secGeneralApproach} to create an $O(|\Gwr R_n|\log^4 |\Gwr R_n|)$-complexity FFT for $\Gwr R_n$. We begin by extending notions about the rook monoid to $\Gwr R_n$ and recording a number of facts about $\Gwr R_n$.


\subsection{Facts about $\Gwr R_n$}
\label{SecWreathInfo}

\begin{defn}For an element $s\in \Gwr R_n$, the {\em rank} of s, denoted $\rk(s)$, is the number of rows of $s$ which contain nonzero entries. \end{defn}
Equivalently, $\rk(s)$ is the number of columns of $s$ which contain nonzero entries.

\begin{defn}The {\em symmetric group wreath product} $\Gwr S_n$ is the group of all $n\times n$ matrices with entries in $\{0\} \cup G$ having exactly one non-zero entry per row and column. The operation on $\Gwr S_n$ is matrix multiplication.\end{defn}
Thus $\Gwr S_n$ is contained in $\Gwr R_n$ as the rank-$n$ elements.

We generalize the notion of domain and range from $R_n$ to $\Gwr R_n$ as follows.
\begin{defn}
Let $s\in \Gwr R_n$. Define $\dom(s)$ to be the set of indices of the columns of $s$ which contain nonzero entries, and $\ran(s)$ to be the set of indices of the rows of $s$ which contain nonzero entries. 
\end{defn}
This definition agrees with our previous definitions of inverse semigroup domain and range, that is,
\[
\dom(s) = s^{-1}s, \quad \ran(s) = ss^{-1},
\]
provided that we once again abuse the distinction between the domain and range of a map and the corresponding partial identities (as elements of $R_n$).

We must understand the maximal subgroups of $\Gwr R_n$. Let $e\in \Gwr R_n$ be idempotent, with $\rk(e)=k$.
\begin{thm}The maximal subgroup of $\Gwr R_n$ at $e$ is isomorphic to $\Gwr S_k$.\end{thm}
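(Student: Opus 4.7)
The plan is to write down the maximal subgroup $G_e$ explicitly as a set of matrices, and then construct an obvious isomorphism with $\Gwr S_k$ by restricting to the relevant $k\times k$ submatrix.

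First I would recall that $G_e=\{s\in \Gwr R_n: s^{-1}s=ss^{-1}=e\}$. Since the idempotents of $\Gwr R_n$ coincide with those of $R_n$, the idempotent $e$ is a $0/1$ partial identity matrix of rank $k$. Let $I=\dom(e)=\ran(e)\subseteq\oneton$ be the $k$-subset where $e$ has its $1$'s. The defining conditions $s^{-1}s=ss^{-1}=e$ force $\dom(s)=\ran(s)=I$ and $\rk(s)=k$. In matrix terms, this says exactly that all nonzero entries of $s$ lie in the $I\times I$ submatrix, and that each row indexed by $I$ and each column indexed by $I$ contains exactly one nonzero entry.

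Next I would define $\phi: G_e \to \Gwr S_k$ by extracting the $I\times I$ submatrix of $s$ and relabeling its rows and columns via the unique order-preserving bijection $I \to \onetok$. Concretely, if $p_e\in R_n$ denotes the order-preserving partial permutation from $\onetok$ onto $I$ (viewed as a $0/1$ matrix), then $\phi(s)=p_e^{-1}\,s\,p_e$ where the multiplication is ordinary matrix multiplication. The image of $\phi$ is all of $\Gwr S_k$, because any element of $\Gwr S_k$ can be conjugated back into the $I\times I$ block to produce an element of $G_e$, giving a two-sided inverse $\phi^{-1}(t)=p_e\,t\,p_e^{-1}$.

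Finally I would check that $\phi$ is a group homomorphism. This is a bookkeeping check: for $s,t\in G_e$, the product $st$ in $\Gwr R_n$ is a matrix product, and since the nonzero entries of $s$ and $t$ are confined to the $I\times I$ block, so is the nonzero part of $st$, and the restriction to that block is precisely the $\Gwr S_k$ product of the corresponding restrictions. Equivalently one uses $p_e p_e^{-1} = e$ acting as the identity on the $I\times I$ block together with the identity $s=ese$ for $s\in G_e$, so that $\phi(s)\phi(t)=p_e^{-1}sp_ep_e^{-1}tp_e=p_e^{-1}setp_e=p_e^{-1}(st)p_e=\phi(st)$.

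The content of this proof is essentially a matrix-block observation; there is no real obstacle beyond carefully unwinding what $\dom$ and $\ran$ mean for elements of $\Gwr R_n$ and confirming that the "off-block" zeros carry no information. If one prefers a more intrinsic argument, one can bypass matrices entirely and use the fact that every element of $G_e$ factors uniquely as a product of a partial identity on $I$ and a "decoration" by group elements indexed by $I$, which reproduces the wreath product structure directly.
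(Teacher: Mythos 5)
Your proposal is correct and follows the same approach as the paper: first characterize $G_e$ as exactly the set of elements $s\in\Gwr R_n$ with $\dom(s)=\ran(s)=\dom(e)$, and then observe that deleting the zero rows and columns (equivalently, conjugating by the order-preserving partial bijection $p_e$ and restricting to the top-left $k\times k$ block) gives the isomorphism with $\Gwr S_k$. You merely spell out the ``obvious'' isomorphism more explicitly than the paper does, which is a reasonable choice of exposition rather than a different route.
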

\begin{proof}Denote this subgroup by $G_e$. We have
\[
G_e = \{s\in \Gwr R_n: ss^{-1} = s^{-1}s = e\}.
\]
Suppose that $\dom(e)=\{i_1,\ldots,i_k\}$ (and hence $\ran(e)=\{i_1,\ldots,i_k\}$, because $e$ is idempotent).
Clearly, then,
\[
\{x \in \Gwr R_n : \dom(x)=\ran(x)=\{i_1,\ldots,i_k\}\} \subseteq G_e.
\]
If $x\in G_e$, then $\dom(x)=\ran(x)$. Furthermore, if $\rk(x)\neq k$, then $x\notin G_e$. If $j\in \dom(x)$ with $j\notin \{i_1,\ldots,i_k\}$, then $x^{-1}x\neq e$, and so $x\notin G_e$. Thus
\[
G_e = \{x \in \Gwr R_n : \dom(x)=\ran(x)=\{i_1,\ldots,i_k\}\},
\]
which is isomorphic to $\Gwr S_k$ in the obvious way (i.e., for $x\in G_e$, delete the rows and columns of $x$ which contain only zeroes).
\end{proof}

We will also need to understand the poset structure of $\Gwr R_n$.
\begin{thm}Let $s,t\in \Gwr R_n$. Then $s\leq t$ if and only if $s$ may be obtained by replacing entries in $t$ with $0$.
\end{thm}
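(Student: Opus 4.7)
The plan is to exploit the characterization $s\leq t \iff s=et$ for some idempotent $e$, and use the fact (already noted just before the theorem) that the idempotents of $\Gwr R_n$ are precisely the idempotents of $R_n$, i.e.\ the diagonal $\{0,1\}$-matrices that are partial identities on some subset $I\subseteq\{1,\ldots,n\}$.

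For the forward direction, suppose $s=et$ where $e$ is a partial identity on $I\subseteq\{1,\ldots,n\}$. I would directly compute the matrix product: since $e$ is diagonal with a $1$ in position $(i,i)$ for $i\in I$ and $0$ elsewhere, left multiplication by $e$ preserves row $i$ of $t$ when $i\in I$ and replaces row $i$ of $t$ by zeros when $i\notin I$. In particular, every entry of $s$ either agrees with the corresponding entry of $t$ or is $0$, so $s$ is obtained from $t$ by replacing entries with $0$.

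For the reverse direction, suppose $s$ is obtained from $t$ by replacing some entries with $0$. Since $t\in \Gwr R_n$ has at most one nonzero entry per row, ``replacing entries with $0$'' means precisely zeroing out some subset of the rows of $t$ (the other entries of $t$ are already $0$). Let $I\subseteq\ran(t)$ be the set of row indices in which $s$ and $t$ still agree nontrivially, i.e.\ $I=\{i:\text{row }i\text{ of }s\text{ is nonzero}\}$, and let $e$ be the partial identity with $1$'s exactly on the diagonal positions indexed by $I$. Then $e$ is idempotent, and by the same matrix computation as above, $et$ equals $t$ on rows indexed by $I$ and is zero on all other rows, so $et=s$. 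Hence $s\leq t$.

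The main thing to be careful about is that entries of $t$ live in $G\cup\{0\}$ rather than $\{0,1\}$, so one must verify that left multiplication by a $\{0,1\}$-diagonal idempotent really does preserve the group-valued entries unchanged rather than multiplying them by anything nontrivial; this is immediate from the matrix-multiplication formula, since the diagonal $1$'s are the identity of $G$. A symmetric argument with $s=tf$ would give the analogous right-handed description in terms of columns, which is consistent with the theorem statement.
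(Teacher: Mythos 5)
Your proof is correct and follows the same route as the paper's: reduce to the definition $s\le t \iff s=et$ for an idempotent $e$, use the fact that idempotents of $G\wr R_n$ are exactly the $\{0,1\}$ partial identity matrices, and observe what left multiplication by such a matrix does. The paper states this in one sentence ("follows directly from the definition together with the fact that the idempotents of $G\wr R_n$ are the idempotents of $R_n$"); you have simply written out the matrix computation and the row-versus-entry bookkeeping explicitly, which is fine.
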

\begin{proof}This follows directly from the definition
\[
s\leq t \iff s=et \up{ for some idempotent }e\in \Gwr R_n
\]
together with the fact that the idempotents of $\Gwr R_n$ are the idempotents of $R_n$ (i.e., the restrictions of the identity matrix).
\end{proof}  

Finally, we record the size of $\Gwr R_n$.
\begin{thm}
\[
|\Gwr R_n| = \sum_{k=0}^n \binom{n}{k}^2 k! |G|^k.
\]
\end{thm}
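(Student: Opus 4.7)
The plan is to prove the formula by a direct counting argument, stratifying $G \wr R_n$ by rank. By the definition of rank and the description of elements of $G \wr R_n$ as $n \times n$ matrices over $\{0\} \cup G$ with at most one nonzero entry per row and column, every element is uniquely determined by four pieces of data: its rank $k$, its set of nonzero columns $\dom(s) \subseteq \{1,\ldots,n\}$ with $|\dom(s)|=k$, its set of nonzero rows $\ran(s) \subseteq \{1,\ldots,n\}$ with $|\ran(s)|=k$, the bijection between these sets recording which column is paired with which row, and an assignment of an element of $G$ to each of the $k$ nonzero positions.

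First I would fix a rank $k \in \{0,1,\ldots,n\}$ and count the rank-$k$ elements of $G \wr R_n$. There are $\binom{n}{k}$ choices for $\dom(s)$ and $\binom{n}{k}$ choices for $\ran(s)$; once these are fixed, there are $k!$ ways to match domain columns to range rows (equivalently, to place $k$ nonzero entries subject to the rook condition); and finally each of the $k$ nonzero entries can independently be any element of $G$, contributing $|G|^k$. Multiplying gives $\binom{n}{k}^2 k!\, |G|^k$ elements of rank $k$.

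Summing over all ranks $k$ from $0$ to $n$ partitions $G \wr R_n$ into disjoint sets and yields the claimed formula. As a sanity check, setting $|G|=1$ recovers Theorem~\ref{ThmSizeRn} for $|R_n|$, which is the right specialization since $\Z_1 \wr R_n = R_n$. There is no serious obstacle here: the argument is essentially the same as the proof of Theorem~\ref{ThmSizeRn}, with the single additional factor $|G|^k$ coming from the independent choice of a group label at each of the $k$ nonzero matrix positions.
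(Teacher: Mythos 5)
Your proof is correct and takes essentially the same approach as the paper: count rank-$k$ elements as $\binom{n}{k}^2 k!$ rook matrices (via domain, range, and matching choices) times $|G|^k$ for the group labels at the nonzero positions, then sum over $k$. The paper simply cites the rook-matrix count from Theorem~\ref{ThmSizeRn} rather than re-deriving it, but the argument is identical in substance.
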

\begin{proof}
There are $\binom{n}{k}^2 k!$ rook matrices of rank $k$, and for a given rook matrix $X$ of rank $k$, there are $|G|$ options to replace each of the $1$'s in $X$ with elements of $G$.
\end{proof}


\subsection{The FFT for $\Gwr R_n$}
\label{SecWreathFFT}

We now explain our FFT for $\Gwr R_n$. We begin by handling the term in Theorem \ref{MyBigThm} concerning the change of basis from the groupoid basis of $\Gwr R_n$ to a Fourier basis. We write $\C [\Gwr R_n]$ and $\C [\Gwr S_k]$ for the complex algebras of rook monoid and symmetric group wreath products. Theorem \ref{BigThm} and the discussion in Section \ref{SecWreathInfo} imply that we have
\[
\C [\Gwr R_n] \cong \bigoplus_{k=0}^n M_{\binom{n}{k}}(\C [\Gwr S_k]).
\]
Theorem \ref{MyBigThm} then applies and yields
\[
\Cplx(\Gwr R_n) \leq \Cplx(\zeta_{\Gwr R_n}) + \sum_{k=0}^n \binom{n}{k}^2 \Cplx(\Gwr S_k).
\]

In \cite{DanWreath}, D. Rockmore constructs a complete set of inequivalent, irreducible representations $\Y_k$ for $\C [\Gwr S_k]$ and proves the following \cite[Corollary 2]{DanWreath}.
\begin{thm}Let $h$ denote the number of inequivalent, irreducible representations of $G$. Then
\[
\T_{\Y_k}(\Gwr S_k)\leq k!|G|^k \cdot
\left[
\frac{\Cplx(G)}{|G|}\cdot\frac{k(k+1)}{2} + 2^h\frac{k^2(k+1)^2}{4} + 1
\right].
\]
\end{thm}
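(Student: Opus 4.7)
The plan is to build the FFT on $\C[\Gwr{S_k}]$ by induction along the subgroup chain $G = \Gwr{S_1}\subset \Gwr{S_2}\subset\cdots\subset \Gwr{S_k}$, using a Clausen--Maslen style ``separation of variables'' at each step. The first ingredient is an adapted family $\Y_k$ of irreducible matrix representations. The irreducibles of $\C[\Gwr{S_k}]$ are indexed by $h$-tuples $(\lambda^{(1)},\ldots,\lambda^{(h)})$ of partitions with $\sum_i |\lambda^{(i)}|=k$; I would realize each such representation in a Young seminormal form relative to the chain so that the restriction of $\rho\in\Y_k$ along $\Gwr{S_j}\supset \Gwr{S_{j-1}}$ is block-diagonal, with blocks equal to the matrices of those irreducibles in $\Y_{j-1}$ whose indexing $h$-tuple is obtained by removing a single box from one of the $\lambda^{(i)}$. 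This generalizes the classical branching rule $S_j \supset S_{j-1}$ and is the wreath-product analogue of Young's seminormal form.

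Once $\Y_k$ is in place, the separation-of-variables step is standard. Fix left coset representatives $x_1,\ldots,x_N$ for $\Gwr{S_{j-1}}$ in $\Gwr{S_j}$, where $N=j|G|$, and for $f\in\C[\Gwr{S_j}]$ decompose $f=\sum_n f_n$ with $f_n$ supported on $x_n\Gwr{S_{j-1}}$. Then for $\rho\in\Y_j$,
\[
\hat f(\rho) \;=\; \sum_{n=1}^{N}\rho(x_n)\,\widehat{f'_n}\bigl(\rho|_{\Gwr{S_{j-1}}}\bigr),
\]
where $f'_n(y)=f_n(x_n y)$. Because the restrictions $\rho|_{\Gwr{S_{j-1}}}$ are block-diagonal with blocks in $\Y_{j-1}$, the $N$ inner Fourier transforms can be grouped into a single batch of Fourier transforms on $\Gwr{S_{j-1}}$. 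This yields
\[
\T_{\Y_j}(\Gwr{S_j}) \;\leq\; j|G|\,\T_{\Y_{j-1}}(\Gwr{S_{j-1}}) + M(j),
\]
where $M(j)$ is the total coset-multiplication cost at level $j$; the base case is $\T_{\Y_1}(\Gwr{S_1})=\Cplx(G)$. Unwinding the recurrence, the $\Cplx(G)$ term accumulates with a prefactor $\sum_{j=1}^{k}j = k(k+1)/2$, matching the first summand in the bracket of the claimed bound.

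The main obstacle is an efficient bound for $M(j)$. The naive count of the matrix products $\rho(x_n)\cdot\widehat{f'_n}(\rho|_{\Gwr{S_{j-1}}})$, summed over $\rho$ and $n$, must be reorganized using block-diagonal sparsity so that one exploits the small size of each block and the structured form of $\rho(x_n)$. For each $\rho$ indexed by $(\lambda^{(1)},\ldots,\lambda^{(h)})$, I expect the refined count over all irreducibles to involve the number of ordered $h$-tuples of partitions of size at most $j$, and I anticipate that the factor $2^h$ in the stated bound arises precisely from the combinatorics of these branching choices across $h$ ``colors''---essentially from bounds of the form $\sum_{(k_1,\ldots,k_h):\,\sum k_i=j} 1 = \binom{j+h-1}{h-1}$ combined with Cauchy--Schwarz against $\sum_\rho d_\rho^2 = j!|G|^j$. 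The polynomial factor $k^2(k+1)^2/4$ should then fall out from summing a cubic-in-$j$ contribution to $M(j)/|\Gwr{S_j}|$ over $j=1,\ldots,k$. The trailing ``$+1$'' absorbs the bookkeeping needed to assemble the final block matrices from the batched inner transforms. The hardest step is organizing the level-$j$ multiplications so that the per-level cost really is cubic in $j$ rather than larger; this is what ultimately makes the bound $O(|\Gwr{S_k}|\log^c|\Gwr{S_k}|)$ whenever $\Cplx(G)$ is well-controlled.
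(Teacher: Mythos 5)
The paper does not prove this theorem---it is quoted directly from Rockmore's paper on wreath-product FFTs (cited as~\cite{DanWreath}, Corollary~2), so there is no proof in the paper to compare against. Your proposal is therefore a blind reconstruction of Rockmore's argument, and while your high-level outline (an adapted seminormal basis along a subgroup chain plus separation of variables at each step) is in the right family of techniques, there is a concrete gap that prevents it from yielding the stated bound.

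The recurrence you write down, $\T_{\Y_j}(\Gwr S_j)\leq j|G|\,\T_{\Y_{j-1}}(\Gwr S_{j-1})+M(j)$ with base case $\T_{\Y_1}(\Gwr S_1)=\Cplx(G)$, does not accumulate $\Cplx(G)$ with a $k(k+1)/2$ prefactor. Unwinding it gives
\[
\T_{\Y_k}(\Gwr S_k)\;\leq\; k!\,|G|^{k-1}\,\Cplx(G)\;+\;\sum_{j=2}^{k}\frac{k!}{j!}\,|G|^{k-j}\,M(j)\;=\;k!|G|^k\cdot\frac{\Cplx(G)}{|G|}\;+\;\cdots,
\]
so $\Cplx(G)/|G|$ enters with coefficient $1$, not $k(k+1)/2$ as you claim (``the $\Cplx(G)$ term accumulates with a prefactor $\sum_{j=1}^{k}j$'' does not follow from the recurrence as written). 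To get the stated quadratic factor, the group FFT must re-enter at every level, which is what happens when you refine each step of the chain to $\Gwr S_{j-1}\subset G^j\rtimes S_{j-1}\subset \Gwr S_j$ (one step of index $|G|$, one of index $j$): the transform over the extra copy of $G$ at level $j$ contributes a $\Cplx(G)$-term whose weight scales with $j$, and summing $j$ from $1$ to $k$ produces $k(k+1)/2$. Your single-step factoring loses this. Separately, the $2^h$ and the $k^2(k+1)^2/4$ factors are asserted rather than derived---you say they ``should fall out'' of the branching combinatorics via Cauchy--Schwarz, but no estimate on $M(j)$ is actually established, so the bound is not proved. To make this rigorous you would need to (i) refine the chain as above so the $\Cplx(G)$ prefactor comes out correctly, and (ii) give an honest count of the coset-multiplication cost $M(j)$ that accounts for the $2^h$ factor (which, in Rockmore's treatment, arises from bounding how many irreducibles of $\Gwr S_{j-1}$ can appear in the restriction of a given irreducible of $\Gwr S_j$, a number governed by the $h$ ``colors'' in the indexing $h$-tuples of partitions).
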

In particular, $\Cplx(\Gwr S_k)$ is bounded by the same amount. We take the $\Y_k$ and tensor them up to be our set of representations $\Y$ for $\C [\Gwr R_n]$.

\begin{thm}
\label{ThmCplxGroupoidGwrRn}We have
\[
\Cplx(\Gwr R_n) = \Cplx(\zeta_{\Gwr R_n}) + O(|\Gwr R_n| \log^4 |\Gwr R_n|).
\]
\end{thm}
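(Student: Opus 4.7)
The plan is to combine Theorem \ref{MyBigThm} (specialized to $S = \Gwr R_n$ via the decomposition $\C[\Gwr R_n]\cong \bigoplus_{k=0}^n M_{\binom{n}{k}}(\C[\Gwr S_k])$ established just above) with Rockmore's bound on $\T_{\Y_k}(\Gwr S_k)$ and a crude estimate on the resulting sum. Since $G$ is a fixed finite group, $|G|$, $\Cplx(G)$, and the number $h$ of irreducible representations of $G$ are all absolute constants; this is the observation that reduces the bracketed Rockmore bound on $\Cplx(\Gwr S_k)$ to a quantity of the form $C_G\, k^4\, k!\,|G|^k$, where $C_G$ depends only on $G$.

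Given this, the argument proceeds in three short steps. First, apply Theorem \ref{MyBigThm} to obtain
\[
\Cplx(\Gwr R_n) \leq \Cplx(\zeta_{\Gwr R_n}) + \sum_{k=0}^n \binom{n}{k}^2 \Cplx(\Gwr S_k).
\]
Second, substitute Rockmore's bound and factor the polynomial growth in $k$ out of the sum, using $k\leq n$, to get
\[
\sum_{k=0}^n \binom{n}{k}^2 \Cplx(\Gwr S_k) \leq C_G\, n^4 \sum_{k=0}^n \binom{n}{k}^2 k!\,|G|^k = C_G\, n^4\, |\Gwr R_n|,
\]
where the last equality is the size formula for $\Gwr R_n$ recorded in the previous subsection. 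Third, convert $n^4$ into $\log^4|\Gwr R_n|$: since $|\Gwr R_n|\geq n!\,|G|^n \geq n!$, we have $n = O(\log|\Gwr R_n|)$ by Stirling, and hence $n^4 = O(\log^4|\Gwr R_n|)$.

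Piecing these together yields
\[
\Cplx(\Gwr R_n) \leq \Cplx(\zeta_{\Gwr R_n}) + O(|\Gwr R_n|\log^4 |\Gwr R_n|),
\]
which is exactly the claimed bound. There is no real obstacle here: the work has already been done by Theorem \ref{MyBigThm} and by Rockmore's theorem for $\Gwr S_k$, and the only thing to check is that the combinatorial factor $n^4$ arising from the polynomial part of Rockmore's estimate absorbs harmlessly into a $\log^4|\Gwr R_n|$ factor once summed against the $\binom{n}{k}^2$ multiplicities. The sharpness of the exponent $4$ is inherited directly from the $k^2(k+1)^2/4$ term in Rockmore's bound, and would improve automatically if a better FFT for $\Gwr S_k$ were substituted.
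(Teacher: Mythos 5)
Your proposal is correct and follows essentially the same route as the paper: apply Theorem~\ref{MyBigThm} with the decomposition $\C[\Gwr R_n]\cong\bigoplus_k M_{\binom{n}{k}}(\C[\Gwr S_k])$, plug in Rockmore's bound for $\Cplx(\Gwr S_k)$, factor the polynomial-in-$k$ prefactor out of the sum using $k\leq n$ together with the size formula $\sum_k\binom{n}{k}^2k!|G|^k=|\Gwr R_n|$, and finally convert $n^4$ to $\log^4|\Gwr R_n|$ via $n=O(\log|\Gwr R_n|)$. Your consolidation of Rockmore's bracketed expression into a single constant $C_G\,k^4$ is just a notational compression of the same estimate.
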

\begin{proof}We have
\begin{align*}
\T_\Y(\Gwr R_n) 
&\leq \Cplx(\zeta_{\Gwr R_n}) + \sum_{k=0}^n \binom{n}{k}^2 k!|G|^k \cdot\left[\frac{\Cplx(G)}{|G|}\cdot\frac{k(k+1)}{2} + 2^h\frac{k^2(k+1)^2}{4} + 1 \right] \\
&\leq \Cplx(\zeta_{\Gwr R_n}) + \left[\frac{\Cplx(G)}{|G|}\cdot\frac{n(n+1)}{2} + 2^h\frac{n^2(n+1)^2}{4} + 1 \right] \cdot \sum_{k=0}^n \binom{n}{k}^2 k!|G|^k \\
&\leq \Cplx(\zeta_{\Gwr R_n}) + \left[\frac{\Cplx(G)}{|G|}\cdot\frac{n(n+1)}{2} + 2^h\frac{n^2(n+1)^2}{4} + 1 \right] \cdot |\Gwr R_n|.
\end{align*}
Now, $|G|$, $\Cplx(G)$, and $2^h$ are constants with respect to $n$, and $n=O(\log |\Gwr R_n|)$. The theorem follows.
\end{proof}

Now, let $f\in \C [\Gwr R_n]$ be an arbitrary element, expressed with respect to the semigroup basis:
\[
f= \sum_{s\in \Gwr R_n}f(s)s.
\]
We would like to express $f$ with respect to the groupoid basis:
\[
f=\sum_{s\in \Gwr R_n}g(s)\s,
\]
where, by (\ref{sissumofbrackett}), the coefficients $g(s)$ are given by
\[
g(s)=\sum_{\substack{t\in \Gwr R_n: \\t\geq s}} f(t).
\]
Our goal is to compute the coefficients $g(s)$ in an efficient manner, and we give an algorithm below for doing so. Note that the algorithm below reduces to the algorithm for the fast zeta transform for $R_n$ given in Section \ref{SecRookFFT} when $G=\Z_1$. As in Section \ref{SecRookFFT}, we begin by proving a recursive formula for the size of $\Gwr R_n$.

\begin{thm}
\label{ThmSizeGwrRnRecursive}
For $n\geq 3$,
\[
|\Gwr R_n| = (2n-1)|G||\Gwr R_{n-1}| + |\Gwr R_{n-1}| - (n-1)^2|G|^2|\Gwr R_{n-2}|.
\]
\end{thm}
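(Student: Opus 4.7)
The plan is to adapt the inclusion-exclusion argument used in the proof of Theorem \ref{ThmSizeRnRecursiveTwo}, accounting for the $|G|$ possible choices of entry at each nonzero position. I view elements of $\Gwr R_n$ as $n\times n$ matrices with entries in $\{0\}\cup G$, and partition them according to whether row $1$ and/or column $1$ contain a nonzero entry.

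First I would count the elements of $\Gwr R_n$ having all zeros in both row $1$ and column $1$: deleting row $1$ and column $1$ provides a bijection between this set and $\Gwr R_{n-1}$, contributing $|\Gwr R_{n-1}|$. Next, for each $\alpha\in\{1,\ldots,n\}$ and each $g\in G$, I count the elements with $g$ in position $(\alpha,1)$; deleting column $1$ and row $\alpha$ leaves an arbitrary element of $\Gwr R_{n-1}$, so this case contributes $n|G|\cdot|\Gwr R_{n-1}|$ total. Symmetrically, for each $\alpha\in\{2,\ldots,n\}$ and each $g\in G$, the elements with $g$ in position $(1,\alpha)$ contribute $(n-1)|G|\cdot|\Gwr R_{n-1}|$ total.

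These three counts together overcount the set $\Gwr R_n$: an element is counted more than once exactly when it has nonzero entries in both some position $(\alpha,1)$ and some position $(1,\beta)$ with $\alpha,\beta\in\{2,\ldots,n\}$ (such an element is counted once in each of the last two cases). For each pair $(\alpha,\beta)$ with $2\leq\alpha,\beta\leq n$ and each pair $(g,h)\in G\times G$ placed in positions $(\alpha,1)$ and $(1,\beta)$ respectively, deleting columns $1,\beta$ and rows $1,\alpha$ gives a bijection with $\Gwr R_{n-2}$, so the overcount totals $(n-1)^2|G|^2|\Gwr R_{n-2}|$. Subtracting yields
\[
|\Gwr R_n|=|\Gwr R_{n-1}|+n|G||\Gwr R_{n-1}|+(n-1)|G||\Gwr R_{n-1}|-(n-1)^2|G|^2|\Gwr R_{n-2}|,
\]
which simplifies to the claimed recursion.

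There is no real obstacle here beyond careful bookkeeping; because the positions of the nonzero entries of an element of $\Gwr R_n$ form an underlying rook matrix, the argument is essentially the one for Theorem \ref{ThmSizeRnRecursiveTwo} with each nonzero position contributing an additional factor of $|G|$ for the choice of group element placed there. Specializing $G=\Z_1$ recovers Theorem \ref{ThmSizeRnRecursiveTwo} as a sanity check.
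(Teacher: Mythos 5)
Your proof is correct and takes essentially the same approach as the paper: the same case decomposition based on whether row $1$ and column $1$ contain a nonzero entry, the same inclusion-exclusion correction for the doubly counted elements, and the same factor-of-$|G|$ bookkeeping. The additional detail you supply about the explicit bijections with $\Gwr R_{n-1}$ and $\Gwr R_{n-2}$ is a nice clarification but does not change the argument.
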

\begin{proof}
$\Gwr R_n$ consists of those elements having all 0's in column $1$ and row $1$ (of which there are $|\Gwr R_{n-1}|$), together with, for each $x\in G$ and $\alpha \in \{1,\ldots, n\}$, those having an $x$ in position $(\alpha, 1)$ (of which there are $n|G||\Gwr R_{n-1}|$ total), together with, for each $x\in G$ and $\alpha \in \{2,\ldots, n\}$, those having an $x$ in position $(1,\alpha)$ (of which there are $(n-1)|G||\Gwr R_{n-1}|$ total). Counting the number of elements of $\Gwr R_n$ in this way overcounts. For each pair $\alpha,\beta$ with $2 \leq \alpha , \beta \leq n$ and for each pair of elements $x,y\in G$, every element with $x$ position $(\alpha,1)$ and $y$ in position $(1,\beta)$ (of which there are $(n-1)^2|G|^2|\Gwr R_{n-2}|$ total) gets counted twice.
\end{proof}

We now explain the fast zeta transform. As in Section \ref{SecRookFFT}, let us denote $\sum_{t\geq s}f(t)$ by $\zeta_f(s)$. Let $s\in \Gwr R_n$. The rows and columns of $s$ are indexed by $\{1,2,\ldots,n\}$.
\begin{itemize}
\item Let $d_i(s)$ be the index of the $i^{th}$ column of $s$ which contains only zeroes.
\item Let $r_i(s)$ be the index of the $i^{th}$ row of $s$ which contains only zeroes.
\end{itemize}
Define ``partial'' zeta transforms at $s$ as follows:
\begin{align*}
\zeta_f(s,\{d_1(s),d_2(s),&\ldots,d_{m}(s)\},\{r_1(s),r_2(s),\ldots,r_{m}(s)\}) = \\ & \sum_{\substack{t\geq s: \\ \up{columns }d_1(s),\ldots,d_m(s)\up{ of }t \up{ contain only zeroes and}\\\up{rows }r_1(s),\ldots,r_m(s)\up{ of }t \up{ contain only zeroes} }}f(t).
\end{align*}

As with $R_n$, we work from the ``top'' down, and our zeta transform proceeds as follows, with steps $0,1,\ldots, n$:
\begin{itemize}
\item Step 0: For all $s\in \Gwr R_n$ with $\rk(s)=n$, compute all $\zeta_f(s,\{\},\{\})=\zeta_f(s)$  (0 operations).
\item Step 1: For all $s\in \Gwr R_n$ with $\rk(s)=n-1$, compute $\zeta_f(s,\{\},\{\})=\zeta_f(s)$ and $\zeta_f(s,\{d_1(s),r_1(s)\})$ ($|G|$ operations for each element $s$).
\[
\vdots
\]
\item Step $n-k$: For all $s\in \Gwr R_n$ with $\rk(s)=k$, compute all
\begin{align*}
& \zeta_f(s,\{\},\{\}) = \zeta_f(s), \\
& \zeta_f(s,\{d_1(s)\},\{r_1(s)\}), \\
& \zeta_f(s,\{d_1(s),d_2(s)\},\{r_1(s),r_2(s)\}), \\
& \vdots \\
& \zeta_f(s,\{d_1(s),d_2(s),\ldots,d_{n-k}(s)\},\{r_1(s),r_2(s),\ldots,r_{n-k}(s)\}).
\end{align*}
\[
\vdots
\]
\end{itemize}

Thus, instead of computing just $\zeta_f(s)$ for the elements $s$ of rank $k$, we compute $\zeta_f(s)$ along with $n-k$ other numbers for each element $s$ of rank $k$. These other numbers are needed for the efficient computation of the zeta transform at elements of rank $k-1$ and $k-2$, and can be discarded when they are no longer needed. We are currently unable to give precise bounds on the amount of memory required for this algorithm, partly due to the lack of an asymptotic formula for \mbox{$|\Gwr R_n|$}. However, it is immediate that it requires the storage of no more than \mbox{$(n+1)|\Gwr R_n|$} complex numbers in memory during runtime.

\begin{thm}Step $n-k$ requires at most
\[
\left(|G|(n-k)^2 + |G|^2 \frac{(n-k-1)(n-k)(2n-2k-1)}{6}\right)
\binom{n}{k}^2 k! |G|^k
\]
operations in total.
\end{thm}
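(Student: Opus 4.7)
The plan is to generalize the proof of the analogous theorem for $R_n$ in Section \ref{SecRookFFT} in a direct way, leveraging the size-recursion in Theorem \ref{ThmSizeGwrRnRecursive} in the same way that the $R_n$ argument leveraged Theorem \ref{ThmSizeRnRecursiveTwo}. Following the template of the $R_n$ proof, I would first introduce notation generalizing $s*(d_i(s)\rightarrow r_j(s))$: for $s\in \Gwr R_n$ and $g\in G$, let $s*(d_i(s)\xrightarrow{g} r_j(s))$ denote the element obtained from $s$ by placing $g$ in the $(r_j(s),d_i(s))$ position. The crucial point is that $s*(d_i(s)\xrightarrow{g} r_j(s)) \geq s$ and has rank one more than $s$. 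I would then show that it suffices to bound, for a single $s \in \Gwr R_n$ of rank $k$, the cost of computing all $n-k+1$ partial zeta transforms on $s$ (with the partial transforms at every higher-rank element already in hand), and then multiply by the number $\binom{n}{k}^2 k! |G|^k$ of elements of rank $k$.

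For a fixed $s$ of rank $k$, I would compute the partial zeta transforms in order of decreasing $m=|D|=|R|$. For $m=n-k$ the value is just $f(s)$ and costs nothing. For $m<n-k$, the key is an inclusion-exclusion identity that is the exact analog of equation (\ref{TwoLevels}) from the $R_n$ case: partition the summation set for $\zeta_f(s,D,R)$ according to (i) whether row $r_{m+1}(s)$ of $t$ contains a nonzero entry and (ii) whether column $d_{m+1}(s)$ of $t$ contains a nonzero entry. This yields
\begin{align*}
\zeta_f(s,D,R) &= \sum_{g\in G}\sum_{i=m+1}^{n-k}\zeta_f\bigl(s*(d_i(s)\xrightarrow{g} r_{m+1}(s)),D,R\bigr) \\
 &+ \sum_{g\in G}\sum_{j=m+2}^{n-k}\zeta_f\bigl(s*(d_{m+1}(s)\xrightarrow{g} r_j(s)),D,R\bigr) \\
 &- \sum_{g,h\in G}\sum_{i,j=m+2}^{n-k}\zeta_f\bigl(s*(d_i(s)\xrightarrow{g}r_{m+1}(s))*(d_{m+1}(s)\xrightarrow{h}r_j(s)),D,R\bigr) \\
 &+ \zeta_f(s,D\cup\{d_{m+1}(s)\},R\cup\{r_{m+1}(s)\}).
\end{align*}
All terms in the first three sums involve elements of strictly larger rank, hence were computed in earlier steps; the final term is the partial zeta transform at $s$ of one larger index, just computed in the previous iteration of the $m$-loop. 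The same justification as in the $R_n$ case, together with the observation that the $g\in G$ range keeps each "arrow" term disjoint (since a matrix entry is either $0$ or a unique $g\in G$), shows that this identity is correct.

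The bookkeeping now finishes the proof. The number of summands in the right-hand side above is $|G|(n-k-m)+|G|(n-k-m-1)+|G|^2(n-k-m-1)^2+1$, so computing $\zeta_f(s,D,R)$ from the already-computed ingredients costs one fewer addition, namely $|G|(n-k-m)+|G|(n-k-m-1)+|G|^2(n-k-m-1)^2$ operations. Summing from $m=0$ to $m=n-k-1$ (using the standard identities $\sum_{j=1}^N j = N(N+1)/2$ and $\sum_{j=1}^{N}(j-1)^2 = (N-1)N(2N-1)/6$, and observing that the two linear pieces combine to $|G|(n-k)^2$), one obtains the per-element bound
\[
|G|(n-k)^2 + |G|^2\frac{(n-k-1)(n-k)(2n-2k-1)}{6}.
\]
Multiplying by the number of rank-$k$ elements in $\Gwr R_n$ yields the claimed total.

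The only step that requires any genuine care is verifying the inclusion-exclusion identity above: one must check that the three correction sums really do remove every element counted more than once, a subtlety that arises because each arrow now carries a $G$-label and one might worry about double-counting across choices of $g\in G$. I expect this to be routine once one observes that for any $t\geq s$ the nonzero entry (if any) in row $r_{m+1}(s)$ of $t$ is unique (likewise for column $d_{m+1}(s)$), so specifying the position and label of such a nonzero entry partitions, rather than overcounts, the relevant slice of the summation domain. Once this is in hand, the rest of the argument is a mechanical arithmetic simplification that is a direct $G$-weighted analog of the computation in the $R_n$ case.
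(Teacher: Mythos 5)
Your proposal is correct and follows essentially the same line of argument as the paper: you isolate a single rank-$k$ element $s$, compute its $n-k+1$ partial zeta transforms in order of decreasing $|D|$ via the same inclusion--exclusion identity (partitioning on whether row $r_{m+1}(s)$ and column $d_{m+1}(s)$ of $t$ are occupied, subtracting the doubly-covered $i,j\geq m+2$ terms), count summands to bound the per-$m$ cost, and sum over $m$. The only cosmetic difference is that the paper records the $m = n-k-1$ case separately as costing $|G|$ operations before starting the general recursion, while you fold it into the uniform formula $|G|(n-k-m)+|G|(n-k-m-1)+|G|^2(n-k-m-1)^2$ — these agree at $m=n-k-1$, so the total is identical.
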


\begin{proof} We will show that, for an element $s\in \Gwr R_n$ with $\rk(s)=k$, computing all 
\begin{align*}
& \zeta_f(s,\{\},\{\}) = \zeta_f(s), \\
& \zeta_f(s,\{d_1(s)\},\{r_1(s)\}), \\
& \zeta_f(s,\{d_1(s),d_2(s)\},\{r_1(s),r_2(s)\}), \\
& \vdots \\
& \zeta_f(s,\{d_1(s),d_2(s),\ldots,d_{n-k}(s)\},\{r_1(s),r_2(s),\ldots,r_{n-k}(s)\}) 
\end{align*}
requires at most
\[
|G|(n-k)^2 + |G|^2 \frac{(n-k-1)(n-k)(2n-2k-1)}{6}
\]
additions, assuming that steps $0,1,\ldots,n-k-1$ have already been completed. 

Suppose $G=\{g_1,g_2,\ldots,g_{|G|}\}$. Let $s*(g_y E_{r_j(s),d_i(s)})$ denote the element of $\Gwr R_n$ that is obtained by inserting $g_y$ into the $r_j(s),d_i(s)$ position of $s$. For example, if 
\[
s = \left[\begin{array}{ccccccc}
0&0&0&0&0&0&0\\ 
g_{y_1}&0&0&0&0&0&0\\
0&0&0&0&0&0&g_{y_3}\\
0&0&0&0&0&0&0\\
0&0&g_{y_2}&0&0&0&0\\
0&0&0&0&0&0&0\\
0&0&0&0&0&0&0
\end{array}\right]
\]
then
\begin{center}
\begin{tabular}{ll}
$d_1(s) = 2$,  &$r_1(s)= 1$, \\ 
$d_2(s) = 4$,  &$r_2(s)= 4$, \\ 
$d_3(s) = 5$,  &$r_3(s)= 6$, \\
$d_4(s) = 6$,  &$r_4(s)= 7$, 
\end{tabular}
\end{center}
and
\[s*(g_{y_4}E_{r_3(s),d_2(s)})=
\left[\begin{array}{ccccccc}
0&0&0&0&0&0&0\\ 
g_{y_1}&0&0&0&0&0&0\\
0&0&0&0&0&0&g_{y_3}\\
0&0&0&0&0&0&0\\
0&0&g_{y_2}&0&0&0&0\\
0&0&0&g_{y_4}&0&0&0\\
0&0&0&0&0&0&0
\end{array}\right].
\]

Now, consider the poset of elements $t\in \Gwr R_n$ with $t\geq s$. This poset is isomorphic to the poset for $\Gwr R_{n-k}$, with $s$ in the place of the $0$ matrix. 

As in Section \ref{SecRookFFT}, we compute our $n-k$ partial zeta transforms at $s$ in the following order. First, we have
\[
\zeta_f(s,\{d_1(s),d_2(s),\ldots,d_{n-k}(s)\},\{r_1(s),r_2(s),\ldots,r_{n-k}(s)\}) = f(s),
\]
which requires no operations.
Next, let $D = \{d_1(s),d_2(s),\ldots,d_{n-k-1}(s)\}$ and $R=\{r_1(s),r_2(s),\ldots,r_{n-k-1}(s)\}$. We have
\begin{align*}
\zeta_f(s,D,R) &= \sum_{i=1}^{|G|}\zeta_f(s*(g_iE_{r_{n-k}(s),d_{n-k}(s)}),D,R) \\
&+\zeta_f(s,\{d_1(s),d_2(s),\ldots,d_{n-k}(s)\},\{r_1(s),r_2(s),\ldots,r_{n-k}(s)\}),
\end{align*}
which requires $|G|$ operations.

Now, suppose $D = \{d_1(s),d_2(s),\ldots,d_{m}(s)\}$ and $R=\{r_1(s),r_2(s),\ldots,r_{m}(s)\}$, with $m<n-k-1$. Following the proof of Theorem \ref{ThmSizeGwrRnRecursive}, we have
\begin{align*}
\zeta_f(s,D,R)&=
  \sum_{i=1}^{|G|}\zeta_f(s*(g_iE_{r_{m+1}(s),d_{m+1}(s)}),D,R) \\
&+\sum_{i=1}^{|G|}\zeta_f(s*(g_iE_{r_{m+1}(s),d_{m+2}(s)}),D,R) \\
&+\cdots \\ 
&+\sum_{i=1}^{|G|}\zeta_f(s*(g_iE_{r_{m+1}(s),d_{n-k}(s)}),D,R) \\
&+\sum_{i=1}^{|G|}\zeta_f(s*(g_iE_{r_{m+2}(s),d_{m+1}(s)}),D,R) \\
&+\cdots \\
&+\sum_{i=1}^{|G|}\zeta_f(s*(g_iE_{r_{n-k}(s),d_{m+1}(s)}),D,R) \\
&-\sum_{\substack{i,j\in \{m+2,\ldots,n-k\} \\ k,l \in \{1,2,\ldots,|G|\}}} \zeta_f(s*(g_kE_{r_{m+1}(s),d_{i}(s)})*(g_lE_{r_{j}(s),d_{m+1}(s)}),D,R) \\
&+\zeta_f(s,D\cup\{d_{m+1}(s)\},R\cup\{r_{m+1}(s)\}).
\end{align*}
Computing $\zeta_f(s,D,R)$ therefore requires no more than
\[
|G|(2n-2k-2m-1) + |G|^2(n-k-m-1)^2
\]
operations. We do this for $m=n-k$ to $0$ to compute, in order,
\begin{align*}
& \zeta_f(s,\{d_1(s),d_2(s),\ldots,d_{n-k}(s)\},\{r_1(s),r_2(s),\ldots,r_{n-k}(s)\}), \\
& \vdots \\
& \zeta_f(s,\{d_1(s),d_2(s)\},\{r_1(s),r_2(s)\}), \\
& \zeta_f(s,\{d_1(s)\},\{r_1(s)\}), \\
& \zeta_f(s,\{\},\{\}) = \zeta_f(s),
\end{align*}
which is what we want. The total number of operations required to compute these is thus no more than
\begin{align*}
&|G| + \sum_{m=0}^{n-k-2}|G|(2n-2k-2m-1) + |G|^2(n-k-m-1)^2 \\
&=|G|(n-k)^2 + |G|^2\frac{(n-k-1)(n-k)(2n-2k-1)}{6}.
\end{align*}
\end{proof}

This algorithm yields the following bound.
\begin{thm}
\label{ThmWreathZetaCplx}
$\Cplx(\zeta_{\Gwr R_n}) = O(|\Gwr R_n| \log^3 |\Gwr R_n|)$.
\end{thm}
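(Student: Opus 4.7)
The plan is to add up the per-step bound from the preceding theorem across all ranks $k=0,1,\ldots,n$ and then absorb the combinatorial factors into the cardinality formula for $|\Gwr R_n|$. Specifically, summing Step $n-k$'s cost gives
\[
\Cplx(\zeta_{\Gwr R_n}) \leq \sum_{k=0}^n \left( |G|(n-k)^2 + |G|^2 \frac{(n-k-1)(n-k)(2n-2k-1)}{6} \right) \binom{n}{k}^2 k! |G|^k.
\]

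First I would bound the coefficient in parentheses uniformly in $k$. Using $(n-k) \leq n$ and $(2n-2k-1) \leq 2n$, the bracketed expression is at most $|G|n^2 + |G|^2 \cdot \tfrac{n^3}{3}$, which is at most $C |G|^2 n^3$ for an absolute constant $C$ (valid for $n\geq 1$, using $|G|\geq 1$). Pulling this constant outside the sum and applying the cardinality formula from Section \ref{SecWreathInfo},
\[
\sum_{k=0}^n \binom{n}{k}^2 k! |G|^k = |\Gwr R_n|,
\]
yields $\Cplx(\zeta_{\Gwr R_n}) \leq C |G|^2 n^3 |\Gwr R_n|$.

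Finally, since $G$ is fixed, $|G|^2$ is an absolute constant with respect to $n$. To convert $n^3$ into the desired $\log^3|\Gwr R_n|$ factor, I would note that $|\Gwr R_n| \geq n!$ (the $k=n$ term alone contributes $n!\,|G|^n \geq n!$), so by Stirling $n = O(\log|\Gwr R_n|)$. Substituting gives $\Cplx(\zeta_{\Gwr R_n}) = O(|\Gwr R_n|\log^3|\Gwr R_n|)$, as required. There is no real obstacle here: the proof is a direct parallel of the corresponding estimate for $R_n$ in Section \ref{SecRookFFT}, with the only care being to treat $|G|$ as a constant with respect to $n$ (consistent with the convention already used in Theorem \ref{ThmCplxGroupoidGwrRn}) and to correctly identify the summation $\sum_k \binom{n}{k}^2 k!|G|^k$ as $|\Gwr R_n|$.
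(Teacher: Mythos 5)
Your proposal is correct and follows essentially the same route as the paper: bound the per-rank coefficient uniformly by a constant times $|G|^2 n^3$, pull it out of the sum, identify $\sum_k \binom{n}{k}^2 k!|G|^k = |G\wr R_n|$, and then use $n = O(\log|G\wr R_n|)$ with $|G|$ treated as a constant. The only cosmetic difference is that the paper tracks the explicit constant $\tfrac{2}{3}$ for $n\geq 3$, whereas you absorb it into an unspecified $C$; the substance is identical.
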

\begin{proof}For $n\geq 3$,
\begin{align*}
|G|(n-k)^2 + |G|^2\frac{(n-k-1)(n-k)(2n-2k-1)}{6} & \leq |G|n^2 + |G|^2\frac{2n^3}{6} \\
&\leq \frac{2}{3}|G|^2 n^3.
\end{align*}
Hence
\[
\Cplx(\zeta_{\Gwr R_n}) \leq \frac{2}{3} |G|^2 n^3 |\Gwr R_n|.
\]
Since $|G|$ is a constant with respect to $n$ and $n=O(\log |\Gwr R_n|)$, we are done.
\end{proof}

Combining Theorems \ref{ThmCplxGroupoidGwrRn} and \ref{ThmWreathZetaCplx}, we obtain:
\begin{thm} $\Cplx(\Gwr R_n) = O(|\Gwr R_n| \log^4 |\Gwr R_n|)$.
\label{ThmCplxSemigroupGwrRn}
\end{thm}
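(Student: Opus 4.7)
The plan is to derive this result purely as a corollary of the two preceding theorems, so the work is already done and only needs to be assembled. First I would invoke Theorem \ref{ThmCplxGroupoidGwrRn}, which bounds $\Cplx(\Gwr R_n)$ by $\Cplx(\zeta_{\Gwr R_n}) + O(|\Gwr R_n| \log^4 |\Gwr R_n|)$; this is the bound obtained by applying the general inverse-semigroup framework of Theorem \ref{MyBigThm} together with Rockmore's symmetric-group wreath-product FFT applied to each maximal subgroup $\Gwr S_k$.

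Next I would invoke Theorem \ref{ThmWreathZetaCplx}, which furnishes $\Cplx(\zeta_{\Gwr R_n}) = O(|\Gwr R_n| \log^3 |\Gwr R_n|)$ via the rank-by-rank fast zeta transform developed for $\Gwr R_n$. Substituting this bound into the expression from Theorem \ref{ThmCplxGroupoidGwrRn} gives
\[
\Cplx(\Gwr R_n) \;\leq\; O(|\Gwr R_n|\log^3|\Gwr R_n|) + O(|\Gwr R_n|\log^4|\Gwr R_n|).
\]
The $\log^3$ term is dominated by the $\log^4$ term for large $n$, so both are absorbed into a single $O(|\Gwr R_n|\log^4|\Gwr R_n|)$ bound, completing the proof.

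There is essentially no obstacle in this final step; the real content lies in the two theorems being combined. The dominant $\log^4$ factor is inherited from the Rockmore FFT on the wreath-product groups $\Gwr S_k$ (which contributes the $\log^4$ in Theorem \ref{ThmCplxGroupoidGwrRn}), not from the zeta transform, so any future improvement in the group-level FFT for $\Gwr S_k$ would automatically improve the exponent here as well. Since $|G|$ and $2^h$ are treated as constants with respect to $n$, and since $n = O(\log |\Gwr R_n|)$ because $|\Gwr R_n| \geq n!$, the asymptotic bookkeeping is immediate and no additional estimates are required.
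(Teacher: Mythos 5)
Your proposal matches the paper's proof exactly: both combine Theorem \ref{ThmCplxGroupoidGwrRn} with Theorem \ref{ThmWreathZetaCplx} and observe that the $O(|\Gwr R_n|\log^3|\Gwr R_n|)$ zeta-transform term is absorbed into the $O(|\Gwr R_n|\log^4|\Gwr R_n|)$ term. The additional commentary about where the $\log^4$ originates is correct and consistent with the paper's development.
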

\begin{proof}We have
\begin{align*}
\Cplx(\Gwr R_n) &= \Cplx(\zeta_{\Gwr R_n}) + O(|\Gwr R_n| \log^4 |\Gwr R_n|) \\
&= O(|\Gwr R_n| \log^3 |\Gwr R_n|) + O(|\Gwr R_n| \log^4 |\Gwr R_n|) \\
&= O(|\Gwr R_n| \log^4 |\Gwr R_n|).
\end{align*}
\end{proof}

\section{Future Directions}
\label{SecFutureDirections}

The generalization of the theory of Fourier transforms to inverse semigroups and
beyond presents a new set of interesting challenges. Theorem \ref{MyBigThm} opens the
door for the development of more FFTs on inverse semigroups, as it reduces the
problem of creating these Fourier transforms to the problems of creating
FFTs on their maximal subgroups and creating fast zeta transforms
on their poset structures. While the theory of group FFTs is well-developed, the
theory of fast zeta transforms is not. An interesting line of research, then, would
be to create a theory of fast zeta transforms for inverse semigroup posets. On the
other hand, the poset structure of an inverse semigroup can be about as bad as
one wants---any meet semilattice is possible. It remains to be seen whether there
are any guiding principles one might employ when creating fast zeta transforms.

We would also like to develop applications of these FFTs. In general, whereas groups capture global symmetries, inverse semigroups capture partial symmetries (see \cite{Lawson} for more on this idea). It is reasonable, then, that FFTs for certain inverse semigroups would be useful for data analysis in situations where FFTs on the analogous groups are useful. For example, the Fourier transform on the symmetric group has been used for the statistical analysis of voting data \cite{Persi}, and Fourier transforms on symmetric group wreath products may be used for the statistical analysis of nested designs \cite{DanWreath}. We are currently investigating applications of the Fourier transform on the rook monoid to the statistical analysis of voting datasets which contain incomplete voter preferences.



\bibliographystyle{plain}

\end{document}